\documentclass[a4paper,reqno]{amsart}

\usepackage[utf8]{inputenc}
\usepackage[T1]{fontenc}
\usepackage[english]{babel}
\usepackage{csquotes}
\usepackage{lmodern}
\usepackage{microtype}

\usepackage{mathtools,amssymb,amsthm,thmtools}
\usepackage{scalerel}
\usepackage{enumerate}

\usepackage{tikz-cd}
\tikzset{%
symbol/.style={%
draw=none,
every to/.append style={%
edge node={node [sloped, allow upside down, auto=false]{$#1$}}}
}
}

\newcommand{\pb}{\ar[dr,phantom,very near start,"{\scalebox{1.5}{$\lrcorner$}}"]}

\newcommand{\sqlab}[1]{\ar[dr,phantom,"{#1}"]}

\newcommand{\act}[2]{\prescript{#1}{}{#2}}

\usepackage{hyperref}
\usepackage[margin=3.3cm]{geometry}

\title{Ternary semi-direct products in semi-abelian categories}
\author{Arnaud Duvieusart}
\email{arnaud.duvieusart@uclouvain.be}
\thanks{The author is a Postdoctoral researcher of the Fonds de la Recherche Scientifique-FNRS.}
\address{Institut de Recherche en Mathématique et Physique, Université catholique de Louvain, Chemin du Cyclotron 2, 1348 Louvain-la-Neuve, Belgium}
\subjclass[2020]{18E13, 
	18E99, 
	08C05
}
\keywords{Semi-abelian categories, actions, semi-direct products}

\newcommand{\C}{\mathcal{C}}
\newcommand{\D}{\mathcal{D}}

\newcommand{\R}{\mathbb{R}}

\newcommand{\Pt}{\mathbf{Pt}}
\newcommand{\Act}{\mathbf{Act}}

\newcommand{\Lie}{\mathbf{Lie}}
\newcommand{\Grp}{\mathbf{Grp}}
\newcommand{\CAlg}{\mathbf{CAlg}}
\newcommand{\Alg}{\mathbf{Alg}}

\newcommand{\Set}{\mathbf{Set}}
\newcommand{\Setop}{\mathbf{Set}_*^{op}}

\renewcommand{\phi}{\varphi}

\DeclareMathOperator*{\SDP}{\scalerel*{\rtimes}{\bigoplus}}

\theoremstyle{plain}
\newtheorem{theorem}{Theorem}[section]
\newtheorem{proposition}[theorem]{Proposition}
\newtheorem{lemma}[theorem]{Lemma}
\newtheorem{corollary}[theorem]{Corollary}

\theoremstyle{definition}
\newtheorem{definition}[theorem]{Definition}
\newtheorem{example}[theorem]{Example}

\theoremstyle{remark}
\newtheorem{remark}[theorem]{Remark}

\begin{document}

\begin{abstract}
	We study the construction of ternary semi-direct products in semi-abelian categories, following a definition of higher semi-direct product previously introduced by Carrasco and Cegarra for groups and Lie algebras. We show that these objects are determined by actions and equivariant morphisms satisfying a certain compatibility condition in the ambient category, and that actions by semi-direct products are a special case of this construction. We also show how their structure can be further simplified in algebraically coherent or locally algebraically cartesian closed categories. We also give a concrete description of the structure of ternary semi-direct products in concrete categories, and how our categorical interpretation of the necessary structure relates to the group and Lie-theoretic versions given by Carrasco and Cegarra.
\end{abstract}

\maketitle

\section*{Introduction}

In the category of abelian groups, or more generally in any abelian category, a split short exact sequence
\[\begin{tikzcd}
	A \ar[r,"j"] & X \ar[r,"p",shift left] & B \ar[l,"s",shift left]
\end{tikzcd}\]
induces an isomorphism $[j,s]\colon A\oplus B\to X$, which identifies $p$ with the projection $\pi_B\colon A\oplus B\to B$. In particular, all split extensions of $B$ by $A$ are equivalent, and $X$ is completely determined by $A$ and $B$. By contrast, in categories such as (non-abelian) groups or Lie algebras, a short exact sequence may admit non-equivalent splittings. To classify such sequences, one needs to consider the additional data of the induced action of $B$ on $A$; $j$ and $s$ then induce an isomorphism between the semi-direct product $A\rtimes B$ and $X$.

Semi-abelian categories \cite{JMT02,BB04} were introduced in order to provide a categorical framework for the study of the common properties of various non-abelian algebraic structures, such as groups, Lie algebras, (non-unital) rings, crossed modules, (non-unital) $C^\ast$-algebras \cite{GR04}, Heyting semi-lattices \cite{Joh04}, or cocommutative Hopf algebras \cite{GSV19}. The notions of action and semi-direct product can be generalized to this setting, and one of the main results of this theory is the correspondence between actions and split extensions \cite{BJ98,BJK05}.

An action of a group $B$ on a group $A$ is typically defined either as a group homomorphism $B\to \operatorname{Aut}(A)$, or as function $B\times A\to A\colon (b,a)\mapsto \act{b}{a}$ satisfying the conditions
\[\act{b}{1} = 1\qquad \act{1}{a}=a \qquad \act{b}{(a a')}= \act{b}{a} \act{b}{a'}\qquad \act{bb'}{a} = \act{b}{(\act{b'}{a})}\]
for all $a,a'\in A$ and $bb'\in B$. Similarly, for Lie algebras, an action of $B$ on $A$ is defined as a morphism of Lie algebras $B\to \operatorname{Der}(A)$, or equivalently as a bilinear map $B\times A\to A$ satisfying the identities
\[\act{[b,b']}{a} = \act{b}{(\act{b'}{a})} - \act{b'}{(\act{b}{a})} \qquad \act{b}{[a,a']} = [\act{b}{a},a'] + [a,\act{b}{a'}]\]
for all $a,a'\in A$ and $b,b'$ in $B$. In either case, the function $B\times A\to A$ is not a morphism of the corresponding category. By contrast, the general definition of action in a semi-abelian category $\C$ is \emph{internal}, in the sense that an action of $B$ on $A$ in $\C$ is defined as a morphism $\xi \colon B\flat A\to A$ in $\C$, where $(X,Y)\mapsto X\flat Y$ is a functor $\C\times \C\to \C$, and the required conditions are simply identities between morphisms in $\C$, which can be expressed as commutative diagrams.

In the abelian case, given a third object $C$, one can always form the biproducts $A\oplus (B\oplus C)$ and $(A\oplus B)\oplus C$, and of course the two coincide. By contrast, the semi-direct products $A\rtimes (B\rtimes C)$ and $(A\rtimes B)\rtimes C$ require different type of actions to be defined; there is a priori no reason for the existence of one to imply that of the other, much less their coincidence. Carrasco and Cegarra introduced an $n$-ary version of the semi-direct product $\SDP_{i=1}^{n} X_i$ of $n$ groups $X_1,\dots,X_n$ \cite{CC91}, and later introduced a similar construction for Lie algebras \cite{CC17}. In each case, the $n$-ary semi-direct product of $X_1,\dots,X_n$ is defined as an object in which each $X_i$ can be embedded, in such a way that the join of $X_1,\dots,X_i$ is normal and has trivial intersection with $X_{i+1}$ for each $i<n$, with the join of $X_1,\dots,X_n$ being the whole object $X$. Equivalently, this means that
\[X = \SDP_{i=1}^{n} X_i = ((X_1\rtimes X_2)\rtimes \cdots )\rtimes X_n,\]
where each term $\SDP_{i=1}^{k} X_i$ for $k<n$ is required to be normal in $X$. In particular, this construction is determined by a series of actions of $X_j$ on $X_i$ for $i<j$, together with a series of functions $\Phi^{k,j}_i\colon X_k\times X_j\to X_i $ for $i<j<k$, satisfying certain identities.

This definition can be given in any semi-abelian category, as the notions of join of subobjects and normality are well understood in this context. However, unlike the actions of $X_j$ on $X_i$, there is no clear way to interpret the extra data of the functions $\Phi^{k,j}_i$ or the required identities internally. The main objective of this article is to give an appropriate categorical treatment of this construction, for the case where $n=3$, in a general semi-abelian category. In particular, we show that a ternary semi-direct product of $A,B,C$ can be constructed using actions of the coproduct $C+B$ on $A$ and of $C$ on $B$, as well as a morphism $\phi$ from the kernel of $\xi^{C}_{B}$ to $A$, satisfying certain conditions which can all be expressed as identities inside the ambient category. We also investigate actions by by a binary semi-direct product on a third object, and show that they are special cases of ternary semi-direct products.

As mentioned above, in groups or Lie algebras, an action of $B$ on $A$ can be described as a morphism from $B$ to an appropriate object of the category. For a general semi-abelian category, this property, called \emph{action representability}, does not necessarily hold. In particular, actions by a coproduct $C+B$ or a semi-direct product $B\rtimes C$ on another object $A$ are not necessarily or uniquely determined by actions of $B$ and $C$ on $A$. We show that, for algebraically coherent semi-abelian categories, it is nevertheless possible to describe the structure of ternary semi-direct products using actions that only involve the given objects. For locally algebraically cartesian closed categories, the requirements characterizing actions by semi-direct products can be expressed very simply, thanks to a \emph{distributive law} between monads of actions.

The article is structured as follows. In the first section, we review the definitions of semi-abelian categories, actions, and semi-direct products, along with their main properties. We also define certain natural transformations involving the monad of actions, which will play an important role later. In the next section, we first examine the objects of the form $A\rtimes (B\rtimes C)$, before introducing ternary semi-direct products, using the definition of Carrasco and Cegarra, and explaining how they can be characterized and constructed. The third and fourth sections cover the cases of algebraically coherent categories and locally algebraically cartesian closed categories, respectively. Finally, in the last section we explore some concrete cases, showing in particular how our categorical interpretation of the extra data as an appropriate morphism $K[\xi^{C}_{B}]\to A$ allows us to recover all the identities given in \cite{CC91,CC17}.

\section{Semi-abelian categories and internal actions}

For a finitely complete category $\C$, we denote $\Pt(\C)$ the \emph{category of points} of $\C$, which is defined as the category whose
\begin{itemize}
	\item objects are pairs of morphisms $p\colon X\to B$ and $s\colon B\to X$ such that $ps=1_B$;
	\item morphisms $(X,B,p,s)\to (Y,C,q,t)$ are pairs of morphisms $f\colon X\to Y$ and $g\colon B\to C$ such that $g p=q f$ and $f s = t g$.
\end{itemize}
We also denote $\Pt_B(\C)$ the fiber at $B$ of the codomain functor $(X,B,p,s)\mapsto B$, i.e. the category whose objects are split epimorphism with codomain $B$ and morphisms $(X,p,s)\to (Y,q,t)$ are morphisms $f\colon X\to Y$ such that $(f,1_B)$ is a morphism of points between $(X,B,p,s)$ and $(Y,B,q,t)$. If $g\colon B\to C$ is a morphism in $\C$, pulling back along $g$ yield a \emph{change of base} functor $g^\ast\colon \Pt_C(\C)\to \Pt_B(\C)$.

\begin{definition}
	A category $\C$ is 
	\begin{itemize}
		\item \emph{protomodular} if the functor $g^\ast\colon \Pt_C(\C)\to \Pt_B(\C)$ is conservative for each $g$ \cite{Bou91};
		\item \emph{semi-abelian} if it is pointed, protomodular, Barr-exact and has binary coproducts \cite{JMT02}.
	\end{itemize}
\end{definition}

If $\C$ is pointed, it is protomodular if and only if the change-of-base along $0\to B$ is conservative for all $B$; since this functor can be identified with the functor $K_B\colon \Pt_B(\C)\to \C$ which sends a point $(X,p,s)$ to the kernel $K[p]$ of $p$, protomodularity is equivalent to the validity of the \emph{split short five lemma}, i.e. the fact that if
\[\begin{tikzcd}
	A \ar[r,"{j}"] \ar[d,"{h}"'] & X \ar[r,"p",shift left] \ar[d,"f"'] & B \ar[l,"s",shift left] \ar[d,"g"]\\
	A' \ar[r,"{j'}"'] & X' \ar[r,"{p'}",shift left] & B' \ar[l,"s'",shift left]
\end{tikzcd}\]
is a morphism of split short exact sequence and $g$ and $h$ are isomorphisms, then $f$ is an isomorphism as well. As a consequence, one can show that for such split short exact sequences, the pair $j,s$ is jointly strongly epimorphic \cite{Bou91} (in fact this is also equivalent to protomodularity for pointed categories).

We also have a functor $K\colon \Pt(\C) \to \C\times \C$ which sends a point $(X,B,p,s)$ to the pair $(K[p],B)$. When $\C$ has binary coproducts, $K$ has a left adjoint $\Sigma$, which sends a pair $(A,B)$ to the point $(B+A,B,[1,0],\iota_B)$. The adjunction $\Sigma\dashv K$ then restricts to an adjunction $\Sigma_B\dashv K_B$ between $\Pt_B(\C)$ and $\C$.

We will denote $\kappa_{B,A}\colon B\flat A \to B+A$ the kernel of $[1,0]\colon B+A\to B$, and given $g\colon B\to B'$ and $f\colon A\to A'$, we define $g\flat f$ to be the unique morphism making the diagram
\[\begin{tikzcd}
	B\flat A \ar[r,"{\kappa_{B,A}}"] \ar[d,"{g\flat f}"'] & B+A \ar[r,"{[1,0]}",shift left] \ar[d,"g+f"'] & B \ar[l,"\iota_B",shift left] \ar[d,"g"]\\
	B'\flat A' \ar[r,"{\kappa_{B',A'}}"'] & B'+A' \ar[r,"{[1,0]}",shift left] & B' \ar[l,"\iota_{B'}",shift left]
\end{tikzcd}\]
commute, so that $(A,B)\mapsto B\flat A$ is a functor $\C\times \C$ to $\C$ and $\kappa$ is a natural transformation.

\begin{definition}[\cite{BJK05}]
	The monad $(\_ \flat \_, \eta,\mu)$ on $\C\times \C$ induced by the adjunction $\Sigma\dashv K$ is called \emph{the monad of actions}, and its category of algebras is the category of actions $\Act(\C)$.
	
	The monad $(B\flat\_,\eta^B,\mu^B)$ on $\C$ is called the \emph{monad of $B$-actions}, and its category of algebras is the category of $B$-actions $\Act_B(\C)$.
	
	We will denote $U\colon \Act(\C)\to \C\times \C$ and $U_B\colon \Act_B(\C)\to \C$ the corresponding forgetful functors.
\end{definition}

The unit of this monad is given by $(\eta^B_{A},1_B)\colon (A,B)\to (B\flat A,B)$, where $\eta^B_{A}$ is the unique morphism such that $\kappa_{B,A}\eta^B_{A}=\iota_A\colon A\to B+A$, while the multiplication is given by $(\mu^B_A, 1_B)$ where $\mu^B_{A}$ is the unique morphism making the following diagram commute :
\[\begin{tikzcd}
	B\flat B\flat A \ar[r,"{\kappa_{B,B\flat A}}"] \ar[d,"\mu^B_{A}"'] & B + B\flat A \ar[r,"{[1,0]}",shift left] \ar[d,"{[\iota_B,\kappa_{B,A}]}"'] & B \ar[l,"\iota_{B}",shift left] \ar[d,equal]\\
	B\flat A \ar[r,"{\kappa_{B,A}}"'] & B+A \ar[r,"{[1,0]}",shift left] & B. \ar[l,"\iota_B",shift left]
\end{tikzcd}\]

An action of $B$ on $A$ is thus a $B\flat\_$-algebra structure on $A$, i.e. a morphism $\xi \colon B\flat A\to A$ in $\C$ such that the following diagrams commute :
\[\begin{tikzcd} A \ar[r,"{\eta^{B}_{A}}"]\ar[dr,equal] & B\flat A \ar[d,"\xi"]\\ & A \end{tikzcd} \qquad \begin{tikzcd} B\flat B\flat A \ar[r,"B\flat \xi"] \ar[d,"\mu^{B}_{A} "'] & B\flat A \ar[d,"\xi"] \\ B\flat A \ar[r,"\xi"'] & A. \end{tikzcd}\]
A morphism of actions $(A,B,\xi)\to (A',B',\xi')$ is a pair of morphisms $f\colon A\to A'$ and $g\colon B\to B'$ such that $f\circ\xi=\xi'\circ f'\flat f$, and a morphism of $B$-actions $(A,\xi) \to (A',\xi')$ is a morphism $f\colon A\to A'$ such that $(f,1_B)$ is a morphism of actions; we will also say that $f$ is $B$-equivariant with respect to $\xi$ and $\xi'$.

From now on, we always assume that $\C$ is a semi-abelian category, unless specified otherwise.

\begin{proposition}[\cite{BJ98,BJK05}]
	The functor $K\colon \Pt(\C)\to \C\times \C$ preserves reflexive coequalizers and reflects isomorphisms, and is thus monadic. The same holds for the functor $K_B\colon \Pt_B(\C)\to \C$, for all $B$ in $\C$.
	
	In particular, there is an equivalence of categories $\Pt(\C) \cong \Act(\C)$, which induces an equivalence $\Pt_B(\C)\cong \Act_B(\C)$ for each $B$ in $\C$.
\end{proposition}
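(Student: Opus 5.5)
The plan is to apply Beck's monadicity theorem to the adjunction $\Sigma\dashv K$, which exists because $\C$ has binary coproducts. In its crude form, Beck's theorem says a right adjoint is monadic as soon as it reflects isomorphisms and the domain has coequalizers of reflexive pairs that the functor preserves. So two things must be checked for $K\colon\Pt(\C)\to\C\times\C$ and for each $K_B$. The equivalences $\Pt(\C)\cong\Act(\C)$ and $\Pt_B(\C)\cong\Act_B(\C)$ then follow at once, because $\Act(\C)$ and $\Act_B(\C)$ were defined as the Eilenberg--Moore categories of the monads induced by these adjunctions.

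\emph{Reflection of isomorphisms.} Let $(f,g)\colon (X,B,p,s)\to(Y,C,q,t)$ be a morphism of points with $K(f,g)=(h,g)$ invertible, where $h\colon K[p]\to K[q]$ is the induced map. The squares of kernels, points and bases then form a morphism of split short exact sequences whose outer components are isomorphisms. By the split short five lemma (equivalent to protomodularity in the pointed case, as recalled above), $f$ is an isomorphism, and hence so is $(f,g)$. The fibred case $K_B$ is the special case $g=1_B$.

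\emph{Reflexive coequalizers.} Let $(f_1,g_1),(f_2,g_2)\colon (X,B,p,s)\rightrightarrows(Y,C,q,t)$ be a reflexive pair in $\Pt(\C)$ with common section $(d,e)$. I would compute the coequalizers $Q$ of $f_1,f_2$ and $D$ of $g_1,g_2$ in $\C$; these exist because $\C$ is exact with finite colimits. The maps $q$ and $t$ induce $\bar q\colon Q\to D$ and $\bar t\colon D\to Q$ with $\bar q\bar t=1$, and one checks that the result is the coequalizer in $\Pt(\C)$. The real content, and the main obstacle, is showing that the induced map $K[q]\to K[\bar q]$ is the coequalizer of the restrictions $K[p]\rightrightarrows K[q]$. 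To do this I would use that in a regular Mal'tsev category (semi-abelian categories are Mal'tsev) the coequalizer of a reflexive pair is the quotient by the effective equivalence relation it generates. I would then pass to kernel pairs and use the \enquote{Barr--Kock} or $3\times3$ lemma, available in semi-abelian categories, to see that taking kernels of split epimorphisms commutes with these regular quotients.

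The key point is that kernels of the horizontal split epis commute with the vertical regular epis: in a protomodular regular category, a regular epimorphism of split epis (a pushout-like square) induces a regular epi on kernels. This is exactly where the reflexive hypothesis is needed, since it makes the pair a morphism of reflexive graphs in $\Pt(\C)$. I expect to quote this from Bourn--Janelidze rather than reprove it in full. Once it is established, $K$ and $K_B$ (the latter by restricting to $g_i=e=1_B$) preserve reflexive coequalizers, so both are monadic by Beck's theorem. The comparison functors $\Pt(\C)\to\Act(\C)$ and $\Pt_B(\C)\to\Act_B(\C)$ are therefore equivalences, and the first restricts to the second on fibres.
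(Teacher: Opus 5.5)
The paper gives no proof of its own here; it only cites Bourn--Janelidze and Borceux--Janelidze--Kelly. Your route---the crude Beck theorem, the split short five lemma for conservativity, and the exact Mal'tsev description of reflexive coequalizers combined with the fact that a regular epimorphism of split epimorphisms restricts to a regular epimorphism on kernels---is the standard argument from those sources, and it is correct. You should, however, make explicit that this restriction lemma is needed twice. The first use is for the quotient $(Y,C)\to(Q,D)$, which gives that $K[q]\to K[\bar q]$ is a regular epimorphism. The second is for the induced map of points from $(X,B,p,s)$ onto the split epimorphism $R\to S$, where $R$ and $S$ are the kernel pairs of $Y\to Q$ and $C\to D$ (by reflexivity these are the images of $(f_1,f_2)$ and $(g_1,g_2)$). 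The kernel $R\cap(K[q]\times K[q])$ of $R\to S$ is exactly the kernel pair of $K[q]\to K[\bar q]$, so this second application shows that $K[p]$ covers that kernel pair, which is what makes $K[q]\to K[\bar q]$ the coequalizer of the restricted pair.
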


This allows us to define the semi-direct product associated to an action :

\begin{definition}[\cite{BJ98}]
	Let $\xi$ be an action of $B$ on $A$. The \emph{semi-direct product} $A\rtimes_\xi B$ is the domain of the point corresponding to $(A,B,\xi)$ under the equivalence $\Pt(\C) \cong \Act(\C)$.
	
	It can be obtained as the coequalizer
	\begin{equation}\label{sdprod_coeq}\begin{tikzcd} B\flat A \ar[r,shift left,"{\kappa_{B,A}}"] \ar[r,shift right,"\iota_A\xi"'] & B + A \ar[r,"q_{\xi}"] & A\rtimes_\xi B, \end{tikzcd}\end{equation}
	yielding a morphism of split short exact sequences
	\begin{equation}\label{sdprod_PO}
		\begin{tikzcd}
			B\flat A \ar[r,"\kappa_{B,A}"] \ar[d,"\xi"{swap,name=L}] & B+A \ar[d,"q_{\xi}"{name=M}] \ar[r,"{[1,0]}",shift left] \ar[from=L,to=M,phantom,"(\ast)"] & B\ar[l,shift left,"\iota_B"] \ar[d,equal] \\ A\ar[r,"j_A"'] & A\rtimes_\xi B \ar[r,"p_B",shift left] & B, \ar[l,"s_B",shift left]
		\end{tikzcd}
	\end{equation}
	where $j_A=q_\xi\iota_A$ and $s_B=q_\xi\iota_B$. We will also write simply $A\rtimes B$ if there is no risk of confusion over what action is considered, and $q_{A\rtimes B}\colon B+A\to A\rtimes B$ for the quotient morphism.
\end{definition}

The square $(\ast)$ in \eqref{sdprod_PO} is a pullback and a pushout. In particular, this implies that $\xi$ and $q_\xi$ have the same kernel $K[\xi]$, and $k_{q_{\xi}} = \kappa_{B, A}\circ k_\xi$. The object $K[\xi]$ coincides with the \emph{twisted cosmash product} of $C$ and $B$ in the sense of \cite{DVdL24}.

The semi-direct product defines a functor $\Act(\C)\to \C$; if $(\alpha,\beta)$ is a morphism $(A,B,\xi) \to (A',B',\xi')$ in $\Act(\C)$, we will denote $\alpha\rtimes \beta$ the induced morphism $A\rtimes_{\xi} B\to A'\rtimes_{\xi'} B'$, which is the unique morphism making the following diagram commute :
\[\begin{tikzcd}
	A  \ar[r,"j_{A}"] \ar[d,"\alpha"'] & A\rtimes_{\xi} B \ar[d,"\alpha\rtimes \beta"'] \ar[r,"{p_B}",shift left] & B\ar[l,shift left,"s_B"] \ar[d,"\beta"] \\ A'\ar[r,"j_A'"'] & A'\rtimes_{\xi'} B' \ar[r,"p_{B'}",shift left] & B'. \ar[l,"s_{B'}",shift left]
\end{tikzcd}\]

\begin{example}
	\begin{enumerate}
		\item For all $B,A$ in $\C$, the composition $\tau = [0,1]\kappa_{B,A}\colon B\flat A\to A$ is an action of $B$ on $A$, called the \emph{trivial action}. The corresponding point is $(A\times B,B,\pi_B,(0,1))$, so that the semi-direct product $A\rtimes_{\tau} B$ is simply the product $A\times B$.
		\item The composition $\chi_B = [1,1]\kappa_{B,B}\colon B\flat B\to B$ is an action of $B$ on itself, called the \emph{conjugation action}. The corresponding point is $(B\times B,B,\pi_2,\delta)$, where $\pi_i\colon B\times B$ denote the product projections and $\pi_i\delta=1$.
		
		Moreover, a subobject $\begin{tikzcd}[cramped] M \ar[r,"m"] & B\end{tikzcd}$ is normal (i.e. a kernel of some morphism in $\C$) if and only if it is closed under the conjugation action \cite{MM10a}, i.e. if and only if there exists a (unique) morphism $\chi^B_M\colon B\flat M \to M$ which makes the diagram
		\[\begin{tikzcd}
			B\flat M \ar[r,"B\flat m"] \ar[d,"\chi^{B}_M"'] & B \flat B \ar[d,"\chi_B"] \\
			M \ar[r,"m"']& B
		\end{tikzcd}\]
		commute; $\chi^B_M$ is then an action of $B$ on $M$, which is also called the conjugation action. The corresponding point is $(Eq[q_M],M,p_2,\delta)$, where $q_M$ is the cokernel of $m$, $(Eq[q_M],p_1,p_2)$ is its kernel pair, and $p_i\delta = 1$.
		
		Note that if $N$ is another normal subobject of $B$ and $M\leq N$, then the monomorphism $M\to N$ is $B$-equivariant with respect to $\chi_M$ and $\chi_N$. More generally, if $M$ and $N$ are normal subobjects of $B$ and $C$ respectively, and $f\colon B\to C$ restricts to a morphism $\overline{f}\colon M\to N$, then $(\overline{f},f)$ is a morphism of actions $(M,B,\chi^{B}_{M})\to (N,C,\chi^{C}_{N})$.
		\item If $\xi$ is an action of $C$ on $A$, and $f\colon B\to C$ is a morphism, the \emph{pullback action} $f^*\xi$ of $B$ on $A$ is the action corresponding to the point defined by pulling back along $f$ :
		\[\begin{tikzcd}
			A \ar[r,"j'_A"]\ar[d,equal] & A\rtimes_{f^*\xi} B \ar[r,shift left,"p_B"] \ar[d,"A\rtimes f"'] \pb & B \ar[l,shift left,"s_B"] \ar[d,"f"] \\
			A \ar[r,"j_A"'] & A\rtimes_{\xi}C \ar[r,shift left,"p_C"] & C. \ar[l,shift left,"s_C"]\end{tikzcd}\]
		Since this defines a morphism of points, $f$ and $1_A$ must define a morphism of actions, which means that $f^*\xi=\xi\circ f\flat A$.
	\end{enumerate}
\end{example}

\begin{definition}[\cite{EG10,EG15}]
	A functor $F\colon \C\to \D$ between semi-abelian categories is called \emph{protoadditive} if it preserves split short exact sequences.
\end{definition}

For example, any finitely complete functor is protoadditive.

\begin{proposition}
	A functor $F\colon \C\to \D$ is protoadditive if and only if $F\times F\colon \C \times \C \to \D\times \D$ can be lifted to a functor $\widetilde{F}\colon \Act(\C)\to \Act(\D)$, with an isomorphism $(F\times F)\circ U \simeq U\circ \widetilde{F}$ which is the identity on the second component in $\D$. Moreover, if $\tilde{F}(A,B,\xi) = (F(A),F(B),\tilde{\xi})$, then $F(A\rtimes_\xi B)\simeq F(A)\rtimes_{\tilde{\xi}} F(B)$.
	
	In particular, for every object $B$ of $\C$, $\widetilde{F}$ restricts to a functor $\widetilde{F}_B\colon \Act_B(\C)\to \Act_{F(B)}(\D)$, with $F\circ U_B\simeq U_{F(B)}\circ \widetilde{F}_B$. Furthermore, for every $f\colon B\to C$ we have a natural isomorphism $F(f)^\ast\circ \widetilde{F}_C\simeq \widetilde{F}_B \circ f^{\ast}$.
\end{proposition}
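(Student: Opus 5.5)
We use the monadicity of $K\colon\Pt(\C)\to\C\times\C$ and of $K_B$ from the Proposition of \cite{BJ98,BJK05}. The idea is to transport everything to points, where protoadditivity applies directly.

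For the forward direction, assume $F$ is protoadditive and define $\widehat F\colon\Pt(\C)\to\Pt(\D)$ by $(X,B,p,s)\mapsto(F(X),F(B),F(p),F(s))$. This is clearly functorial, and $F(p)F(s)=1$. Since $F$ preserves split short exact sequences, $F(k_p)$ is a kernel of $F(p)$. This gives a natural isomorphism $K\circ\widehat F\simeq(F\times F)\circ K$, which is the identity on the second component because the base of $\widehat F(X,B,p,s)$ is literally $F(B)$. Composing with the equivalences $\Act(\C)\simeq\Pt(\C)$ and $\Pt(\D)\simeq\Act(\D)$ yields $\widetilde F$. Under these equivalences the forgetful functors $U$ correspond to $K$, so $(F\times F)U\simeq U\widetilde F$.

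The semidirect-product claim then comes for free. If $\widetilde F(A,B,\xi)=(F(A),F(B),\tilde\xi)$, then the point of $\tilde\xi$ is isomorphic in $\Pt_{F(B)}(\D)$ to $\widehat F$ of the point of $\xi$. Taking domains gives $F(A\rtimes_\xi B)\simeq F(A)\rtimes_{\tilde\xi}F(B)$.

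For the converse, suppose such a lifting $\widetilde F$ exists, and let $(X,B,p,s)$ be a point with kernel $A$ and action $\xi$. From $\widetilde F(A,B,\xi)=(F(A),F(B),\tilde\xi)$ we obtain the split short exact sequence $F(A)\to F(A)\rtimes_{\tilde\xi}F(B)\rightleftarrows F(B)$. This is the step I expect to be the main obstacle: we must show that this sequence is isomorphic to $F$ applied to the original one. The lifting alone only tells us about $F(A)$ and $F(B)$; it does not a priori relate $F(X)$ to $F(A)\rtimes_{\tilde\xi}F(B)$. I would therefore read the statement's hypothesis as including compatibility with semi-direct products, or equivalently as a lift to points commuting with $K$ up to an isomorphism that is the identity on bases. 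With that reading, $F(X)\simeq F(A)\rtimes_{\tilde\xi}F(B)$ over $F(B)$, compatibly with sections. The kernel of $F(p)$ is then $F(A)$ via $F(k_p)$, so $F$ preserves the split short exact sequence.

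The restricted statements follow directly. The functor $\widehat F$ restricts to fibres, sending $\Pt_B(\C)$ to $\Pt_{F(B)}(\D)$, because it fixes the base to $F(B)$ on the nose; this gives $\widetilde F_B$ with $FU_B\simeq U_{F(B)}\widetilde F_B$.

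For the change-of-base isomorphism, let $f\colon B\to C$. The pullback of a split epimorphism along $f$ is built from a kernel and a semidirect product: by the pullback-action example it is $A\rtimes_{f^*\xi}B$. Protoadditivity gives $F(A\rtimes_{f^*\xi}B)\simeq F(A)\rtimes F(B)$, and this is exactly the pullback of $F(A\rtimes_\xi C)\to F(C)$ along $F(f)$. To justify the last identification, note that $F(A)$ is the kernel of both points and apply the split short five lemma to the comparison map into the pullback in $\D$. This gives $F(f)^*\widehat F_C\simeq\widehat F_Bf^*$. Transporting along the equivalences, and using $f^*\xi=\xi\circ(f\flat A)$, yields the stated isomorphism $F(f)^*\circ\widetilde F_C\simeq\widetilde F_B\circ f^*$.
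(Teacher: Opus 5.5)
Your proposal is correct and follows the paper's proof. Both transport $\widehat F\colon\Pt(\C)\to\Pt(\D)$ through the equivalences $\Pt\simeq\Act$. Both use the comparison $\kappa^F\colon F(K[p])\to K[F(p)]$, whose invertibility is the definition of protoadditivity, to get $(F\times F)U\simeq U\widetilde F$. Both deduce the change-of-base claim from preservation of pullbacks along split epimorphisms, which you justify more explicitly than the paper via the split short five lemma.

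Your doubt about the converse is justified. The assignment $(A,B,\xi)\mapsto(F(A),F(B),\tau)$, with $\tau$ the trivial action, strictly lifts $F\times F$ for every $F$, including abelianization $\Grp\to\Grp$, which is not protoadditive. So the literal converse fails. The paper implicitly adopts your reading: its converse is only the tautology that $\kappa^F$ is invertible for the canonical lift.

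One precision is needed in your last step. The identification $F(X)\simeq F(A)\rtimes_{\tilde\xi}F(B)$ must send $F(k_p)$ to $j_{F(A)}$, that is, it must be $\kappa^F$ itself. Compatibility with projections and sections alone does not give this.
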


\begin{proof}
	Any functor $F\colon \C \to \D$ induces a functor $\widehat{F}\colon\Pt(C)\to \Pt(D)$, which sends a point $(X,B,p,s)$ to $(F(X),F(B),F(p),F(s))$. If $F(0)=0$, then $F(p\circ k_p) =0$, so that there is a natural morphism $\kappa^F$ such that $k_{F(p)}\circ \kappa^F = F(k_p)$, and by definition, $F$ is protoadditive if and only if $\kappa^F$ is an isomorphism. Composing $\widehat{F}$ with the equivalences $\Pt(\C)\cong \Act(\C)$ and $\Pt(\D)\cong \Act(\D)$ then gives $\widetilde{F}\colon \Act(\C)\to \Act(\D)$, and $(\kappa^F,1)$ gives the isomorphism $(F\times F)\circ U \simeq U\circ \widetilde{F}$.
	
	Thus by construction, for each action $(A,B,\xi)$, its image under $\widetilde{F}$ is $(F(A),F(B),\tilde{\xi})$, with the action $\tilde{\xi}$ induced by the split short exact sequence
	\[\begin{tikzcd}
		F(A) \ar[r,"F(j_A)"] & F(A\rtimes_\xi B) \ar[r,shift left,"F(p_B)"] &[1em] F(B); \ar[l,shift left, "F(s_B)"]\end{tikzcd}\]
	this implies that $F(A\rtimes_\xi B) \simeq F(A)\rtimes_{\tilde{\xi}} F(B)$. Since protoadditive functors preserve pullbacks along split epimorphisms, we also have $F(A\rtimes_{f^\ast\xi} B) \simeq F(A)\rtimes_{F(f)^\ast\tilde{\xi}} F(B)$, and thus $\widetilde{f^\ast \xi} = F(f)^\ast \tilde{\xi}$.
\end{proof}

We recall that the semi-direct product satisfies the universal property :

\begin{proposition}[\cite{Jan03,MM10}]\label{prop:sdp_UP}
	Let $(A,B,\xi)$ be an action in $\C$. Given morphisms $a\colon A\to X$ and $b\colon B\to X$, there exists a (unique) morphism $f\colon A\rtimes B\to X$ such that $fj_A=a$ and $f s_B=b$ if and only if $(a,b)$ is a morphism $(A,B,\xi)\to (X,X,\chi_X)$ in $\Act(\C)$, i.e. the diagram
	\begin{equation}\begin{tikzcd}
			B\flat A \ar[d,"\xi"'] \ar[r,"b\flat a"] & X\flat X \ar[d,"\chi_X"]\\
			A \ar[r,"a"'] & X 
		\end{tikzcd}\label{eq:closed_conj}\end{equation}
	commutes.
\end{proposition}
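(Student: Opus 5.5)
The plan is to use the coequalizer presentation \eqref{sdprod_coeq} of $A\rtimes_\xi B$ together with the universal property of the coproduct $B+A$. Given $a\colon A\to X$ and $b\colon B\to X$, we have the copairing $[b,a]\colon B+A\to X$. A morphism $f\colon A\rtimes B\to X$ with $fj_A=a$ and $fs_B=b$ is the same thing as a factorisation of $[b,a]$ through $q_\xi$. Indeed, $j_A=q_\xi\iota_A$ and $s_B=q_\xi\iota_B$ give $fq_\xi=[b,a]$. Conversely, any $f$ with $fq_\xi=[b,a]$ satisfies both equations. Uniqueness holds because $q_\xi$ is an epimorphism; alternatively, $j_A$ and $s_B$ are jointly strongly epimorphic by protomodularity. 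So $f$ exists if and only if $[b,a]\kappa_{B,A}=[b,a]\iota_A\xi=a\xi$.

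It remains to show that $[b,a]\kappa_{B,A}=\chi_X\circ(b\flat a)$; then the condition above is exactly the commutativity of \eqref{eq:closed_conj}. By definition $\chi_X=[1,1]\kappa_{X,X}$, and naturality of $\kappa$ gives $\kappa_{X,X}(b\flat a)=(b+a)\kappa_{B,A}$. Hence
\[
\chi_X(b\flat a)=[1,1](b+a)\kappa_{B,A}=[b,a]\kappa_{B,A}.
\]
Combining this with the first paragraph proves both directions at once. The pair $(a,b)$ is a morphism of actions $(A,B,\xi)\to(X,X,\chi_X)$ precisely when $a\xi=\chi_X(b\flat a)$, so this is the condition in the statement.

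There is no serious obstacle. The only point needing care is that the coequalizer description \eqref{sdprod_coeq} is genuinely a coequalizer, which is asserted in the definition (via the pushout square $(\ast)$). If one preferred to avoid it, one could use that $(\ast)$ is a pushout: the maps $[b,a]\colon B+A\to X$ and $a\colon A\to X$ agree on $B\flat A$ exactly under the displayed condition, so they induce $f$. This gives the same argument.
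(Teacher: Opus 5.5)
Your proof is correct and complete. The paper gives no proof of its own here: the proposition is quoted from \cite{Jan03,MM10}, so there is no in-paper argument to compare against.

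Your route is the standard direct one. Morphisms $f$ with $fj_A=a$ and $fs_B=b$ are exactly the factorisations of $[b,a]$ through the coequalizer $q_\xi$ of \eqref{sdprod_coeq}. Such a factorisation exists precisely when $[b,a]\kappa_{B,A}=a\xi$. The identity $\chi_X\circ(b\flat a)=[1,1](b+a)\kappa_{B,A}=[b,a]\kappa_{B,A}$, which is just the defining square of $b\flat a$ composed with $[1,1]$, turns this condition into the commutativity of \eqref{eq:closed_conj}.

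Your pushout variant via $(\ast)$ is equally valid. The paper itself relies on your reformulation of the condition later, in the proof of \autoref{prop:3sdp_colimit}. There, maps out of $A\rtimes(C+B)$ are described as triples $g_A,g_B,g_C$ with $[g_C,g_B,g_A]\kappa_{C+B,A}=g_A\xi^{C+B}_A$.
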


Such a morphism $f$, if it exists, will be denoted $[a,b\rangle\colon A\rtimes B\to X$.

In the category of groups, one can also define the semi-direct product $AB \simeq A\rtimes B$ of two subgroups of a group $G$, provided $A$ is normal, or at least closed under conjugation by $B$. More generally, we have the following result :

\begin{proposition}\label{prop:sdp_int}
	Let $X$ be an object with two subobjects $A,B$, such that $A$ is closed under conjugation by $B$, i.e. there exists $\xi$ making the diagram \eqref{eq:closed_conj} commute. Then $\xi$ is an action of $B$ on $X$, and the induced morphism $[a,b\rangle\colon A\rtimes_\xi B\to X$ has its image isomorphic to $A\vee B$ and its kernel isomorphic to $A\wedge B$. In particular, if $A$ and $B$ have trivial intersection, then the join of $A$ and $B$ is isomorphic to $A\rtimes_\xi B$.
\end{proposition}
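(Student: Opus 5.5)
We need to prove three things: that $\xi$ is an action (of $B$ on $A$, despite the typo "on $X$"), that the image of $[a,b\rangle$ is $A\vee B$, and that its kernel is $A\wedge B$.

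\textbf{$\xi$ is an action.} Write $a\colon A\to X$ and $b\colon B\to X$ for the monomorphisms. Both action axioms are equalities of morphisms with codomain $A$. Since $a$ is monic, it suffices to check them after composing with $a$.
\begin{itemize}
\item For the unit: $a\xi\eta^B_A=\chi_X(b\flat a)\eta^B_A=\chi_X\eta^X_X a=a$, by naturality of $\eta$ and the unit axiom for $\chi_X$.
\item For associativity: $a\xi(B\flat\xi)=\chi_X(b\flat a)(B\flat \xi)=\chi_X(X\flat a\xi)(b\flat B\flat A)$, which becomes $\chi_X(X\flat\chi_X)(b\flat b\flat a)$ after substituting $a\xi$. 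By the associativity of $\chi_X$ and naturality of $\mu$, this equals $\chi_X(b\flat a)\mu^B_A=a\xi\mu^B_A$.
\end{itemize}
So $\xi$ is an action, and $(a,b)$ is a morphism of actions into $(X,X,\chi_X)$. Proposition~\ref{prop:sdp_UP} then yields $f=[a,b\rangle$ with $fj_A=a$ and $fs_B=b$.

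\textbf{The image is $A\vee B$.} Factor $f=m e$ with $e$ a regular epi and $m$ monic. The image $I$ contains $A$ and $B$, since $a=m(ej_A)$ and $b=m(es_B)$. Conversely, let $S\leq X$ be any subobject containing $A$ and $B$. Then $A$ is still closed under conjugation by $B$ inside $S$: the conjugation action $\chi_S$ is the restriction of $\chi_X$, because $X\flat -$ and $\flat$ are natural and $S\to X$ is monic. Applying Proposition~\ref{prop:sdp_UP} inside $S$ gives a morphism $A\rtimes B\to S$ whose composite with $S\to X$ agrees with $f$ on $j_A$ and $s_B$. Since $j_A,s_B$ are jointly strongly epimorphic (protomodularity), this composite equals $f$. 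Hence $f$ factors through $S$, so $I\leq S$. Therefore $I=A\vee B$.

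\textbf{The kernel is $A\wedge B$.} This is the main obstacle. Let $k\colon K\to A\rtimes B$ be the kernel of $f$. Since $p_Bk$ lands in $B$, composing with $b$ gives $b\,p_B k=f s_B p_B k$. We want to show $p_Bk$ factors through $A\wedge B=A\times_X B$. The first thing to try is the following. Consider the morphism $g=k-s_Bp_Bk$, interpreted as follows: we have the pullback of $p_B$ along $p_Bk$, or more concretely the map $K\to A\rtimes B$ induced into the kernel $A$. In a semi-abelian setting one cannot subtract directly, so instead:
\begin{enumerate}
\item Form the pullback $P=K\times_B (A\rtimes B)$ of $p_B$ along $p_Bk$. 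It is a point over $K$ with kernel $A$.
\item The maps $(1,k)$ and $(1,s_Bp_Bk)\colon K\to P$ are two sections of the same point. Compare them via $f$: one has $f k=0$, while $f s_B p_B k=b p_B k$.
\end{enumerate}

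A cleaner route is to use protomodularity directly. Consider the map $(p_B,f)\colon A\rtimes B\to B\times X$. The kernel of $f$ restricted along $s_B$ is $0$, because $fs_B=b$ is monic. So $p_Bk$ is monic, by the standard fact that in a protomodular category $p_B$ reflects monos on subobjects meeting the kernel $A$ trivially. More precisely, $K\wedge A=K[fj_A]=0$ since $a$ is monic. A subobject intersecting $K[p_B]$ trivially is mapped monomorphically by $p_B$; this is standard in protomodular categories (Bourn), and we would cite it.

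Then $K\cong p_B(K)\leq B$. For $x\in K$, in generalized elements, we have $0=f(x)=f(j_A\alpha)f(s_B\beta)$, which would give $b\beta\in A$. To avoid elements, the plan is:
\begin{itemize}
\item show that $b\,p_Bk$ factors through $a$ via a morphism $u\colon K\to A$, namely the image of the sequence under a suitable restriction;
\item then $(u,p_Bk)\colon K\to A\times_X B$ provides the comparison;
\item its inverse $A\wedge B\to A\rtimes B$ is built from the "anti-diagonal" of the pullback $A\times_X B$, that is, the commutator-free difference of $j_A$ and $s_B$.
\end{itemize}

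Concretely, the kernel of $f$ equals the kernel of the induced map of points from $A\rtimes B$ over $B$ to $(X\times B\rightleftarrows B)$, sending the point to $X$ with its conjugation-pullback action $b^*\chi_X$. By Proposition~\ref{prop:sdp_UP}, $f$ corresponds to $a\rtimes 1\colon A\rtimes_\xi B\to X\rtimes_{b^*\chi_X}B\cong X\times B$ (trivializing via the conjugation point) followed by the projection. Computing kernels through these pullbacks identifies $K$ with $A\times_X B$. Verifying this identification of $X\rtimes_{b^*\chi_X}B$ with $X\times B$, compatibly with the maps, is the step I expect to require the most care.
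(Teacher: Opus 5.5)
Your verification that $\xi$ is an action of $B$ on $A$ is correct; it is the paper's argument written out. Your identification of the image with $A\vee B$ is also valid, by a slightly different route. The paper notes that $[a,b\rangle\circ q_\xi=[b,a]$ with $q_\xi$ a regular epimorphism. You instead prove minimality by re-applying \autoref{prop:sdp_UP} inside any subobject $S\geq A,B$, which works because conjugation is natural and $S\to X$ is monic.

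The kernel part, however, has a genuine gap.

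You correctly show that $p_Bk$ is a monomorphism, since $K[p_Bk]=K\wedge A=K[a]=0$. But the two facts that identify its image with $b^{-1}(A)$ are only announced: that $b\,p_Bk$ factors through $a$, and conversely that $A\wedge B\to B$ factors through $p_Bk$. The tools you suggest for them do not exist in a semi-abelian category. Generalized elements of $A\rtimes B$ cannot be written as products $j_A\alpha\cdot s_B\beta$, and there is no ``difference'' of $j_A$ and $s_B$.

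Your final route does not close the gap either, and it misplaces the difficulty. The isomorphism $X\rtimes_{b^\ast\chi_X}B\cong X\times B$ is immediate: pull back the conjugation point $(X\times X,X,\pi_2,\delta)$ along $b$. After that, $a\rtimes 1=(f,p_B)$, and $K[f]$ is the preimage of $0\times B$ under $(f,p_B)$. No pullback you have at hand computes this preimage; it is the original problem restated.

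The missing ingredient is the normality of $A$ in $A\vee B$.
\begin{enumerate}
\item The corestriction $a'\colon A\to A\vee B$ is the regular image of the normal monomorphism $j_A$ along the regular epimorphism $[a',b'\rangle$. Hence it is normal, with a cokernel $q$.
\item Then $q[a',b'\rangle=\psi p_B$ with $\psi=qb'$. This gives a morphism of short exact sequences from $A\to A\rtimes B\to B$ to $A\to A\vee B\to (A\vee B)/A$ which is the identity on $A$.
\item By protomodularity its right-hand square is a pullback, so $K[[a,b\rangle]=K[[a',b'\rangle]\cong K[\psi]=K[qb']=b'^{-1}(A)\cong A\wedge B$.
\end{enumerate}
Some quotient by $A$ of this kind is what lets you express ``$b(\beta)$ lies in $A$'' without elements.
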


\begin{proof}
	For a given $\xi$, the condition $a\circ \xi = \chi_X \circ b\flat a = (b^\ast \chi_X)\circ B\flat a$ implies that $A$ is closed in $X$ under the action $b^*\chi_X$, so that $\xi$ must be an action of $B$ on $A$, since $a$ is a monomorphism. Furthermore, by \autoref{prop:sdp_UP} there exists a unique morphism $[a,b\rangle\colon A\rtimes_{\xi} B\to X$. Since $q_\xi$ is a regular epimorphism, the image of $[a,b\rangle$ coincides with the image of $[a,b]\colon A+B \to X$, which is $A\vee B$. Let us denote $a'\colon A \to A\vee B$ and $b'\colon B\to A\vee B$ the corestrictions of $a$ and $b$ to $A\vee B$, respectively; then the corestriction of $[a,b\rangle$ is $[a',b'\rangle$, which is a regular epimorphism by construction.
	
	Since $a'$ is the image $[a',b'\rangle \circ j_A$ and $j_A$ is normal, $a'$ is normal as well. We then have a commutative diagram
	\[\begin{tikzcd}
		A \ar[d,equal] \ar[r,"j_A"]& A\rtimes B \ar[r,shift left,"p_B"] \ar[d,"{[a',b'\rangle}"] & B \ar[l,shift left,"s_B"] \ar[d,"\psi"]\\
		A \ar[r,"a'"'] & A\vee B \ar[r,"q"'] & (A\vee B)/A,
	\end{tikzcd}\]
	where $q$ is the cokernel of $a'$ and $\psi$ is induced by $[a',b'\rangle$. We have $\psi=q [a',b'\rangle s_B=q b'$. Furthermore, the right-hand square is a pullback (since the horizontal sides have isomorphic kernels), so that
	\[K[[a,b\rangle] = K[[a',b'\rangle]\simeq K[\psi] = K[q b']=b^{-1} (A) \simeq A\wedge B.\]
\end{proof}

We end this section by exploring some of the interactions between the coproduct and action bifunctors, which will play an important role in the rest of the article.

\begin{proposition}
	Given three objects $A,B,C$, there exist two $C$-equivariant morphisms
	\[\begin{tikzcd}
		(C\flat B\flat A,\mu^{C}_{B\flat A}) \ar[r,"\gamma_{C,B,A}"] & ((C+B)\flat A,\iota_C^\ast\mu^{C+B}_A) \ar[r,"\overline{\kappa}_{C,B,A}"] & (C\flat (B+A),\mu^{C}_{B+A})
	\end{tikzcd}\]
	such that $\overline{\kappa}_{C,B,A}\circ\gamma_{C,B,A} = C\flat \kappa_{B,A}$; furthermore, the diagram
	\begin{equation}\begin{tikzcd}
			C\flat A \ar[d,"C\flat\eta^B_A"'] \ar[r,equal] & C\flat A   \ar[r,equal] \ar[d,"\iota_C\flat A"] & C\flat A \ar[d,"C\flat \iota_A"] \\
			C\flat B\flat A \ar[r,"\gamma_{C,B,A}"] & (C+B)\flat A \ar[r,"{\overline{\kappa}_{C,B,A}}"] & C\flat (B+A) \\
			B\flat A \ar[u,"\eta^{C}_{B\flat A}"] \ar[r,equal] & B\flat A \ar[r,"\kappa_{B,A}"'] \ar[u,"\iota_B\flat A"'] & B+A \ar[u,"\eta^{C}_{B+A}"'].
		\end{tikzcd}\label{eq:diag_kappa_units}\end{equation}
	commutes, and
	\begin{equation}\begin{tikzcd}[column sep=large]
			(C+B)\flat A \ar[r,"{\overline{\kappa}_{C,B,A}}"] & C \flat (B+A)  \ar[r,"{C\flat [1,0]}", shift left]  & C\flat B  \arrow[l,shift left, "C\flat \iota_B"] 
		\end{tikzcd}\label{eq:canonical_sses_cba}\end{equation}
	is a split short exact sequence in the category of $C$-actions.
\end{proposition}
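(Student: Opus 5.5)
I will build both morphisms as restrictions of canonical morphisms between coproducts, using that $\flat$ is defined via kernels. For $\overline{\kappa}_{C,B,A}$, consider the morphism $C+(B+A)\to C+(B+A)$ which is the identity, viewing the domain as $(C+B)+A$. Composing $\kappa_{C+B,A}\colon (C+B)\flat A\to (C+B)+A\cong C+(B+A)$ with $[1,0]\colon C+(B+A)\to C$ gives $[1,0]\circ\kappa_{C+B,A}$, which factors as $[1,0]_{C+B\to C}\circ[1,0]\circ\kappa_{C+B,A}=0$. Hence $\kappa_{C+B,A}$ factors uniquely through $\kappa_{C,B+A}$, and this factorization is $\overline{\kappa}_{C,B,A}$. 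For $\gamma_{C,B,A}$, I take $\gamma=\kappa'$ where $(C+B)\flat A$ receives the morphism $C\flat\kappa_{B,A}$ followed by a factorization. Concretely, $C+\kappa_{B,A}\colon C+B\flat A\to C+(B+A)\cong (C+B)+A$, restricted along $\kappa_{C,B\flat A}$, lands in the kernel of $[1,0]\colon (C+B)+A\to C+B$, because the composite with $[1,0]$ kills $B\flat A$ (since $[1,0]\kappa_{B,A}=0$ already when mapping to $B$, and $C$ goes to $C$ but is killed by $\kappa_{C,-}$'s defining property only after projecting to $C$). I expect the precise check here to be the main care point: one shows $[1,0]_{(C+B)+A}\circ(C+\kappa_{B,A})\circ\kappa_{C,B\flat A}=(C+0)\circ\kappa_{C,B\flat A}$, which is $\iota_C[1,0]\kappa_{C,B\flat A}=0$. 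This gives $\gamma$, and $\overline{\kappa}\gamma=C\flat\kappa_{B,A}$ follows by postcomposing with the mono $\kappa_{C,B+A}$ and noting both sides equal $(C+\kappa_{B,A})\kappa_{C,B\flat A}$.

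Commutativity of \eqref{eq:diag_kappa_units} is then checked square by square by postcomposing with the monomorphisms $\kappa_{C+B,A}$ or $\kappa_{C,B+A}$ and using naturality of $\kappa$ and the definitions $\kappa\eta=\iota$; each reduces to an identity between coproduct injections, e.g.\ $\overline{\kappa}\circ(\iota_B\flat A)$ and $\eta^C_{B+A}\kappa_{B,A}$ both become $\iota_{B+A}\kappa_{B,A}$ in $C+(B+A)$. $C$-equivariance: the actions are pullbacks of the free $\mu$ actions, so I will verify that $\gamma$ and $\overline\kappa$ underlie morphisms of points over $C$, namely the maps $C+(B\flat A)\to (C+B)+A$ and the identity of $(C+B)+A$, which are morphisms in $\Pt_C$ after pulling back along $\iota_C$; the induced maps on kernels are then equivariant by the equivalence $\Pt_C(\C)\simeq\Act_C(\C)$.

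Finally, for \eqref{eq:canonical_sses_cba}: $C\flat[1,0]\circ C\flat\iota_B=1$ by functoriality, and all maps are $C$-equivariant since $C\flat(-)$ lands in free algebras. Exactness: since $C\flat-$ is the kernel functor $K_C\circ\Sigma_C$ and kernels commute with kernels, $K[C\flat[1,0]]$ is the kernel of $C+[1,0]\colon C+(B+A)\to C+B$ intersected with $K[[1,0]\colon C+(B+A)\to C]$, which is exactly the kernel of $[1,0]\colon (C+B)+A\to C+B$, namely $(C+B)\flat A$ via $\overline\kappa$. As kernels in $\Act_C(\C)$ are computed in $\C$, this is a split short exact sequence of $C$-actions.
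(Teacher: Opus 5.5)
Your proposal is correct and follows essentially the same route as the paper: $\gamma_{C,B,A}$ and $\overline{\kappa}_{C,B,A}$ are the morphisms induced on the kernels $\kappa_{C,B\flat A}$, $\kappa_{C+B,A}$, $\kappa_{C,B+A}$, and every identity is verified by postcomposing with these monomorphisms. The only differences are minor and both valid. You get the $C$-equivariance of $\overline{\kappa}_{C,B,A}$ from a morphism of points over $C$ after pulling back along $\iota_C$, whereas the paper uses equivariance for the conjugation actions of $C+B+A$. You also show exactness of $(C+B)\flat A\to C\flat(B+A)\to C\flat B$ by intersecting kernels directly, instead of through the paper's $3\times 3$ diagram.
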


\begin{proof}
	Under the canonical isomorphism $(C+B)+A\simeq C+B+A \simeq C+(B+A)$, the morphisms $[1,0]$, $[\iota_C,\iota_B,0]$ and $C+[1,0]\colon C+B+A\to C+B$ coincide, and their composition with $[1,0]\colon C+B\to C$ is $[1,0,0]$; we then define $\overline{\kappa}_{C,B,A}$ and $\gamma_{C,B,A}$ as the unique morphisms making the diagram
	\begin{equation}\begin{tikzcd}[column sep=large]
			C\flat B\flat A \ar[d,"\kappa_{C,B\flat A}"'] \ar[r,"\gamma_{C,B,A}"] & (C+B)\flat A \ar[d,"\kappa_{C+B,A}"] \ar[r,"\overline{\kappa}_{C,B,A}"] & C\flat (B+A) \ar[d,"\kappa_{C,B+A}"]\\
			C+B\flat A \ar[r,"C+\kappa_{B,A}"] \ar[d,"{[1,0]}"'] & C+B+A \ar[r,equal] \ar[d,"{[\iota_C,\iota_B,0]}"] & C+B+A \ar[d,"{[1,0,0]}"]\\
			C\ar[r,"\iota_C"'] & C+B \ar[r,"{[1,0]}"'] & C
		\end{tikzcd}\label{eq:diag_kappa_def}\end{equation}
	commutative; it is then clear that their composition is equal to $C\flat \kappa_{B,A}$. Since $\overline{\kappa}_{C,B,A}$ is $(C+B+A)$-equivariant with respect to the conjugation actions on $(C+B)\flat A$ and $C\flat (B+A)$, it is also $C$-equivariant with respect to the pullbacks of these actions along $\iota_C$, which coincide with $\iota_C^\ast\mu^{C+B}_A$ and $\mu^{C}_{B+A}$ respectively. Furthermore, the commutativity of \eqref{eq:diag_kappa_units} follows from that of 
	\[\begin{tikzcd}[column sep=large]
		C+ A \ar[d,"C+\eta^B_A"'] \ar[r,equal] & C+ A \ar[d,"C+ \iota_A"] \ar[r,equal] & C+ A \ar[d,"\iota_C+A"] \\
		C+ B\flat A \ar[r,"C+\kappa_{B,A}"] & C+(B+A) \ar[r,"\simeq"] & (C+B)+A\\
		B\flat A \ar[u,"\iota_{B\flat A}"] \ar[r,"\kappa_{B,A}"'] & B+A \ar[u,"\iota_{B+A}"']\ar[r,equal] & \ar[u,"\iota_B+A"'] B+A,
	\end{tikzcd}\]
	since $\kappa_{C,B+A}$ is a monomorphism.
	
	By definition of $\overline{\kappa}_{C,B,A}$, we also have a $3\times 3$ diagram
	\[\begin{tikzcd}[column sep=large]
		(C+B)\flat A \ar[r,"{\overline{\kappa}_{C,B,A}}"] \ar[d,equal] & C \flat (B+A) \pb \ar[r,"{C\flat [1,0]}", shift left] \ar[d,"\kappa_{C,B+A}"'] & C\flat B \ar[d,"\kappa_{C,B}"] \arrow[l,shift left, "C\flat \iota_B"] \\
		(C+B)\flat A \ar[r,"\kappa_{C+B,A}"'] \ar[d] & C + B + A \ar[d,"{[1,0,0]}",shift left] \ar[r,"{C+[1,0]}",shift left] & C + B \ar[d,"{[1,0]}",shift left] \ar[l,shift left,"{C+\iota_B}"]\\
		0 \ar[r] & C \ar[u,"\iota_C",shift left] \ar[r,equal] & C \ar[u,"\iota_C",shift left],
	\end{tikzcd}\]
	where each column and the middle and bottom rows are split short exact sequences, so that the top row is also a split short exact sequence. Note that the middle row corresponds to the canonical action of $C+B$ on $(C+B)\flat A$ (namely, $\mu^{C+B}_A$), so that the top row corresponds to the pullback action $\kappa_{C,B}^*\mu^{C+B}_A$ of $C\flat B$ on $(C+B)\flat A$. Since by construction all the morphisms involved are $C$-equivariant, it is also a split short exact sequence in $\Act_C(\C)$.
\end{proof}

We can use $\gamma$ to construct, given a cospan $\begin{tikzcd} C\ar[r,"c" ] & Z & \ar[l,"b"']B\end{tikzcd}$, a canonical morphism $\nu^{c,b}_A\colon C\flat B\flat A\to Z\flat A$ for each object $A$, as the composition
\[\begin{tikzcd}
	C\flat B\flat A \ar[r,"\gamma_{C,B,A}"] & (C+B)\flat A \ar[r,"{[c,b]\flat A}"] & Z\flat A.
\end{tikzcd}\]

Equivalently, $\nu^{c,b}_A$ is the unique morphism making the diagram
\[\begin{tikzcd}
	C\flat B\flat A \ar[r,"\kappa_{C,B\flat A}"] \ar[d,"\nu^{c,b}_A"']& C+B\flat A \ar[d] \ar[r,"{[1,0]}",shift left] & C \ar[d,"c"] \ar[l,"\iota_C",shift left]\\
	Z\flat A \ar[r,"\kappa_{Z,A}"'] & Z+ A \ar[r,"{[1,0]}",shift left] & \ar[l,"\iota_Z",shift left] Z
\end{tikzcd}\]
commute, where the middle arrow is given by $([c,b]+A) \circ (C+\kappa_{B,A})$; since this composition is equal to $[\iota_Z,\kappa_{Z,A}] \circ (c+b\flat A)$, $\nu^{c,b}_A$ is also equal to $\mu^Z_A\circ (c\flat b\flat A)$. In particular, the cospan $\begin{tikzcd}[cramped]
	B\rar[equal] & B\rar[equal] & B
\end{tikzcd}$ yields $\nu^{1,1}_A=\mu^B_A$, while the cospan $\begin{tikzcd}[cramped]
	C\rar["\iota_C"] & C+B & B\lar["\iota_B"']
\end{tikzcd}$ yields $\nu^{\iota_C,\iota_B}_A=\gamma_{C,B,A}$. As a consequence, $\nu^{c,b}_A$ can also be characterized as the unique morphism of $C$-actions $\nu\colon (C\flat B\flat A,\mu^C_{B\flat A})\to (Z\flat A,c^\ast \mu^Z_A)$ such that $\nu\circ \eta^{C}_{B\flat A} = b\flat A$.
\begin{lemma}\label{prop:nu_prop}\begin{enumerate}
		\item Given a morphism of cospans
		\[\begin{tikzcd}
			C\ar[r,"c"]\ar[d,"\gamma"'] & Z \ar[d,"\zeta"] & \ar[l,"b"']B \ar[d,"\beta"] \\
			C' \ar[r,"c'"'] & Z' & \ar[l,"b'"]B',
		\end{tikzcd}\]
		we have the identity $\nu^{c',b'}_A\circ \gamma\flat \beta\flat A = \nu^{c'\gamma,b'\beta}_A = \nu^{\zeta c,\zeta b}_A = \zeta\flat A\circ \nu^{c,b}_A$.
		\item Given a commutative diagram
		\[\begin{tikzcd}[column sep=small,row sep=small]
			D \ar[dr,"d"'] & & C \ar[dl,"c"] \ar[dr,"c'"'] & & B \ar[dl,"b"]\\
			& Y \ar[dr,"y"'] & & X \ar[dl,"x"] & \\
			& & Z, & & 
		\end{tikzcd}\]
		the following square commutes :
		\[\begin{tikzcd}
			D\flat C\flat B\flat A \ar[r,"D\flat \nu^{c',b}_{A}"] \ar[d,"\nu^{d,c}_{B\flat A}"'] & D\flat X\flat A \ar[d,"\nu^{yd,x}_{A}"] \\
			Y\flat B\flat A \ar[r,"\nu^{y,xb}_{A}"'] & Z\flat A.
		\end{tikzcd}\]
	\end{enumerate}
\end{lemma}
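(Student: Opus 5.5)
The whole argument will rest on the characterization of $\nu^{c,b}_A$ stated just before the lemma. Namely, $\nu^{c,b}_A$ is the unique morphism of $C$-actions
\[(C\flat B\flat A,\mu^C_{B\flat A})\to (Z\flat A,c^\ast\mu^Z_A)\]
whose composite with $\eta^C_{B\flat A}$ is $b\flat A$. We will also use the formula $\nu^{c,b}_A=\mu^Z_A\circ(c\flat b\flat A)$. Throughout, we will use that $(C\flat B\flat A,\mu^C_{B\flat A})$ is the free $C$-action on $B\flat A$. Hence two $C$-equivariant morphisms out of it agree as soon as they agree after precomposition with $\eta^C_{B\flat A}$.

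For (1), the middle equality $\nu^{c'\gamma,b'\beta}_A=\nu^{\zeta c,\zeta b}_A$ is immediate, since the two cospans are literally equal. For $\nu^{c',b'}_A\circ(\gamma\flat\beta\flat A)=\nu^{c'\gamma,b'\beta}_A$, we use the formula and functoriality of $\flat$:
\[\mu^{Z'}_A\circ(c'\flat b'\flat A)\circ(\gamma\flat\beta\flat A)=\mu^{Z'}_A\circ(c'\gamma\flat b'\beta\flat A).\]
For $\zeta\flat A\circ\nu^{c,b}_A=\nu^{\zeta c,\zeta b}_A$, we use naturality of $\mu$ in the first variable, i.e. that $(\zeta\flat A,\zeta)$ is a morphism of actions. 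This follows since $\mu$ is the multiplication of the monad $\flat$ on $\C\times\C$, with components $(\mu^B_A,1_B)$, and is natural in $(A,B)$. It gives
\[\zeta\flat A\circ\mu^Z_A=\mu^{Z'}_A\circ(\zeta\flat\zeta\flat A),\]
and then
\[\mu^{Z'}_A\circ(\zeta\flat\zeta\flat A)\circ(c\flat b\flat A)=\mu^{Z'}_A\circ(\zeta c\flat\zeta b\flat A).\]

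For (2), both composites are $D$-equivariant maps
\[(D\flat C\flat B\flat A,\mu^D)\to(Z\flat A,(yd)^\ast\mu^Z_A).\]
For the right-hand path: $D\flat\nu^{c',b}_A$ is a free map and hence $D$-equivariant, and $\nu^{yd,x}_A$ is $D$-equivariant by the characterization. For the bottom path: $\nu^{d,c}_{B\flat A}$ is $D$-equivariant into $(Y\flat B\flat A,d^\ast\mu^Y)$. Also, $\nu^{y,xb}_A$ is $Y$-equivariant into $(Z\flat A,y^\ast\mu^Z_A)$, so pulling back along $d$ makes it $D$-equivariant. By freeness it therefore suffices to compare the two composites after precomposing with $\eta^D_{C\flat B\flat A}$.

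On the right-hand path, naturality of $\eta^D$ gives
\[D\flat\nu^{c',b}_A\circ\eta^D=\eta^D_{X\flat A}\circ\nu^{c',b}_A,\qquad \nu^{yd,x}_A\circ\eta^D_{X\flat A}=x\flat A,\]
so the composite is $x\flat A\circ\nu^{c',b}_A$. By part (1) this equals $\nu^{xc',xb}_A=\nu^{yc,xb}_A$. On the bottom path, $\nu^{d,c}_{B\flat A}\circ\eta^D=c\flat B\flat A$, and then
\[\nu^{y,xb}_A\circ(c\flat B\flat A)=\nu^{yc,xb}_A\]
by part (1), applied to the morphism of cospans $(c,1_Z,1_B)$ from $(C\xrightarrow{yc}Z\xleftarrow{xb}B)$ to $(Y\xrightarrow{y}Z\xleftarrow{xb}B)$. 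The two paths therefore agree.

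The main point needing care is the equivariance claims. First, that $\nu^{d,c}_{B\flat A}$ lands in the pullback action $d^\ast\mu^Y$ in the way the characterization states. Second, that precomposing a $Y$-equivariant map with the pullback along $d$ gives a $D$-equivariant map. Both follow from the remark that the pullback action is $f^\ast\xi=\xi\circ f\flat A$, together with naturality of $\mu$. Alternatively, everything can be verified directly with the formula $\nu=\mu\circ(c\flat b\flat A)$ and the monad associativity law $\mu^Z\circ\mu^Z=\mu^Z\circ(Z\flat\mu^Z)$. Both paths then reduce to $\mu^Z\circ\mu^Z\circ(yd\flat yc\flat xb\flat A)$, with one application of naturality of $\mu$ in each case, and this serves as a backup argument.
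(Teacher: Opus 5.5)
Your proof is correct, and for part (2) it is the paper's argument: both composites are $D$-equivariant into $(Z\flat A,(yd)^\ast\mu^Z_A)$, and precomposing with $\eta^D_{C\flat B\flat A}$ reduces both to $x\flat A\circ\nu^{c',b}_A=\nu^{yc,xb}_A$ via part (1). The only difference is in part (1): you compute directly from $\nu^{c,b}_A=\mu^Z_A\circ(c\flat b\flat A)$ using bifunctoriality of $\flat$ and naturality of $\mu$, whereas the paper checks that all four terms are $C$-equivariant into $(\zeta c)^\ast\mu^{Z'}_A$ and agree after precomposition with $\eta^C_{B\flat A}$; both routes are valid, and yours is slightly more direct.
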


\begin{proof}
	\begin{enumerate}
		\item It suffices to observe that each term gives a morphism $C\flat B\flat A\to Z'\flat A$ which is $C$-equivariant with respect to $\mu^{C}_{B\flat A}$ and $c^\ast\zeta^\ast \mu^{Z'}_A$, and whose composition with $\eta^{B}_{A}$ is $(\zeta b)\flat A$.
		\item By construction, $\nu^{y,xb}_A$ is a morphism of $Y$-actions $(Y\flat B\flat A,\mu^{Y}_{B\flat A})\to (Z\flat A,y^{\ast}\mu^{Z}_A)$, so that both $\nu^{y,xb}_A \circ \nu^{d,c}_{B\flat A}$ and $\nu^{yd,x}_{A}\circ D\flat \nu^{c',b}_{A}$ are morphisms of $D$-actions
		\[(D\flat C\flat B\flat A,\mu^{D}_{C\flat B\flat A})\to (Z\flat A,(yd)^{\ast}\mu^{Z}_A).\]
		Furthermore, we have
		\[\nu^{y,xb}_{A} \circ \nu^{d,c}_{B\flat A} \circ \eta^{D}_{C\flat B\flat A} = \nu^{y,xb}_A \circ c\flat B\flat A = \nu^{yc,xb}_A = \nu^{xc',xb}_A = x\flat A \circ \nu^{c',b}_A\]
		\[\nu^{yd,x}_{A}\circ D\flat \nu^{c',b}_{A} \circ \eta^{D}_{C\flat B\flat A} = \nu^{yd,x}_{A}\circ \eta^{D}_{X\flat A}\circ \nu^{c',b}_{A} = x\flat A \circ \nu^{c',b}_A,\]
		so that $\nu^{y,xb}_{A} \circ \nu^{d,c}_{B\flat A} = \nu^{yd,x}_{A}\circ D\flat \nu^{c',b}_{A}$.
	\end{enumerate}
\end{proof}

\section{Ternary semi-direct products}

\subsection{Action by a semi-direct product}

It is of course well-known that split epimorphisms are stable under composition in any category. In particular, this implies :

\begin{lemma}\label{prop:equivalence_pt_3}
	Let $(Y,C,q,t)$ be a point in $\C$. Then there is an isomorphism of categories
	\begin{equation}\Pt_Y(\C)\simeq \Pt_{(Y,q,t)} (\Pt_C(\C)).\label{eq:equivalence_pt_3}\end{equation}
\end{lemma}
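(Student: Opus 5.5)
We construct two mutually inverse functors directly; no earlier result beyond the definitions of $\Pt_Y(\C)$ and $\Pt_C(\C)$ is needed. First we unwind the right-hand side of \eqref{eq:equivalence_pt_3}. An object of $\Pt_{(Y,q,t)}(\Pt_C(\C))$ is a point $(X,r,u)$ of $\Pt_C(\C)$ together with morphisms $p\colon (X,r,u)\to (Y,q,t)$ and $s\colon (Y,q,t)\to (X,r,u)$ in $\Pt_C(\C)$ with $ps=1_Y$. Being morphisms in $\Pt_C(\C)$ means that
\[
qp=r,\qquad pu=t,\qquad rs=q,\qquad st=u .
\]
A morphism between two such objects is a morphism $f\colon X\to X'$ of $\Pt_C(\C)$ (so $r'f=r$ and $fu=u'$) such that $p'f=p$ and $fs=s'$.

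Define $F\colon \Pt_Y(\C)\to \Pt_{(Y,q,t)}(\Pt_C(\C))$ on objects by
\[
F(X,p,s)=\bigl((X,\,qp,\,st),\,p,\,s\bigr).
\]
This is well defined for the following reasons. Since $qpst=qt=1_C$, the triple $(X,qp,st)$ is a point over $C$; this is the stability of split epimorphisms under composition. Moreover $p$ is a morphism of points over $C$ because $q\circ p=qp$ and $p\circ st=t$. Likewise $s$ is such a morphism because $qp\circ s=q$ and $s\circ t=st$. On a morphism $f$ of $\Pt_Y(\C)$, that is, one with $p'f=p$ and $fs=s'$, we set $F(f)=f$. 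This is legitimate since $qp'f=qp$ and $fst=s't$.

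Define $G$ in the other direction by forgetting: $G\bigl((X,r,u),p,s\bigr)=(X,p,s)$, with $G(f)=f$. The result is a point over $Y$ because $ps=1_Y$, and the morphism conditions $p'f=p$ and $fs=s'$ are part of the data.

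Clearly $GF=1$. For $FG=1$, the equations listed above force $r=qp$ and $u=st$. Hence the structure $(r,u)$ is determined by $(p,s)$, and the morphism conditions in $\Pt_C(\C)$ follow automatically from $p'f=p$ and $fs=s'$. Since both functors are the identity on underlying morphisms, they are strictly inverse, which gives an isomorphism of categories and not merely an equivalence.

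There is no real obstacle. The only point needing care is the observation that the point structure over $C$ on $X$ is uniquely recovered from $p$ and $s$; this is what makes the isomorphism strict.
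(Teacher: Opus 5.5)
Your proof is correct and follows essentially the same route as the paper: both observe that $r=qp$ and $u=st$ are forced, so the point structure over $C$ is determined by $(p,s)$, and that the remaining conditions then follow from $ps=1_Y$. You additionally write out the inverse functors and check the action on morphisms, which the paper leaves implicit.
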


\begin{proof}
	An object $((X,q',t'),p,s)$ in $\Pt_{(Y,q,t)} (\Pt_C(\C))$ is determined by an object $(X,q',t')$ of $\Pt_C(\C)$ and morphisms $p\colon (X,q',t')\to (Y,q,t)$ and $s\colon (Y,q,t)\to (X,q',t')$ in $\Pt_C(\C)$ such that $ps=1_Y$. This amounts to morphisms $p,s,q',t'$ in $\C$ such that
	\[q'=qp \qquad pt'=t \qquad q's =q \qquad t'=st \qquad q't'=1_C.\]
	In particular, the first and fourth conditions show that $q'$ and $t'$ are determined by $p$ and $s$ (and $q$ and $t$). Furthermore, since $ps=1_Y$, the first and fourth conditions imply the third and second respectively, and together they also imply the fifth. Thus $((X,q',t'),p,s)=((X,qp,st),p,s)\in \Pt_{(Y,q,t)}$ is completely determined by $(X,p,s)\in\Pt_Y(\C)$.
\end{proof}

If we translate this isomorphism in terms of actions, we obtain :

\begin{theorem}\label{prop:equivalence_act_3}
	For every action $(B,C,\xi)$, there is an equivalence of categories
	\begin{equation}\Psi_\xi\colon \Act_{B\rtimes_{\xi} C}(\C)\to \Act_{(B,\xi)} (\Act_C(\C)), \label{eq:equivalence_act_3}\end{equation}
	such that
	\[U_{(B,\xi)}\circ \Psi_\xi = s_C^\ast \colon \Act_{B\rtimes_{\xi} C}(\C) \to \Act_{C}(\C) \]
	\[\widetilde{U_{C}}\circ \Psi_\xi = j_B^\ast \colon \Act_{B\rtimes_{\xi} C}(\C) \to \Act_{B}(\C).\]
	In particular, given three objects $A,B,C$ and three actions $(A,B,\xi^{B}_{A})$, $(A,C,\xi^{C}_{A})$, $(B,C,\xi^{C}_{B})$ in $\C$, this equivalence induces a bijection between
	\begin{enumerate}[(1)]
		\item actions $\xi^{B\rtimes C}_A$ of $B\rtimes C$ on $A$ such that $j_B^{\ast}\xi^{B\rtimes C}_A = \xi^{B}_A$ and $s_C^{\ast}\xi^{B\rtimes C}_A = \xi^{C}_A$;
		\item actions of $(B,\xi^{C}_B)$ on $(A,\xi^{C}_A)$ in $\Act_C(\C)$, whose image under $\widetilde{U_{C}}$ is $(A,B,\xi^B_A)$;
		\item actions $\xi^{C}_{A\rtimes B}$ of $C$ on $A\rtimes B$, such that 
		\[\begin{tikzcd}[cramped](A,\xi^C_A) \ar[r,"j_A"] & (A\rtimes B,\xi^{C}_{A\rtimes B}) \ar[r,shift left,"p_B"]& (B,\xi^{C}_B) \ar[l,shift left, "s_B"]\end{tikzcd}\]
		is a split short exact sequence in $\Act_C(\C)$.
	\end{enumerate}
	Furthermore, such corresponding actions yield an isomorphism between their semi-direct products
	\[A\rtimes (B \rtimes C)\simeq (A\rtimes B) \rtimes C.\]
\end{theorem}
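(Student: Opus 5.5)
The plan is to obtain $\Psi_\xi$ by composing known equivalences. First apply Lemma~\ref{prop:equivalence_pt_3} to the point $(B\rtimes_\xi C,C,p_C,s_C)$. This gives an isomorphism $\Pt_{B\rtimes C}(\C)\simeq \Pt_{(B\rtimes C,p_C,s_C)}(\Pt_C(\C))$. By the monadicity proposition we have $\Pt_{B\rtimes C}(\C)\simeq\Act_{B\rtimes C}(\C)$ on the left-hand side. On the right-hand side we use $\Pt_C(\C)\simeq \Act_C(\C)$, under which the point $(B\rtimes C,p_C,s_C)$ corresponds to $(B,\xi)$.

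To identify $\Pt_{(B,\xi)}(\Act_C(\C))$ with $\Act_{(B,\xi)}(\Act_C(\C))$, note that $\Act_C(\C)\simeq\Pt_C(\C)$ is itself semi-abelian: it is monadic over $\C$ via a monad preserving reflexive coequalizers (since $K_C$ does), and it is pointed and protomodular. Alternatively, we can avoid this and transport the adjunction $\Sigma_{(B,\xi)}\dashv K_{(B,\xi)}$ directly. The kernel functor $K_{(B,\xi)}\colon\Pt_{(B,\xi)}(\Act_C(\C))\to\Act_C(\C)$ is computed by kernels in $\C$, equipped with the restricted $C$-action, because the forgetful functor $U_C$ creates limits. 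Protomodularity of $\Act_C(\C)$ then follows from that of $\C$, since $U_C$ is conservative and preserves kernels. Preservation of reflexive coequalizers follows because these are computed in $\C$ as well. Beck's theorem then gives the monadicity, and hence the equivalence of the fibre of points with $\Act_{(B,\xi)}(\Act_C(\C))$. The main obstacle is checking these structural facts about $\Act_C(\C)$, in particular that coproducts exist and that $U_C$ preserves reflexive coequalizers; I would deduce them from the corresponding facts for $\Pt_C(\C)$ via the isomorphism.

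Next, the two compatibility identities are verified by tracking kernels. An object $(X,p,s)$ of $\Pt_{B\rtimes C}(\C)$ with action $\zeta$ goes to the point $(X,qp,st)$ over $C$, whose kernel is an object with a $C$-action, together with the point over $(B,\xi)$ induced by $p,s$. Forgetting to $\Act_C(\C)$, the kernel of the induced split epimorphism is $K[p]=A$, and the $C$-action on it is obtained by pulling $X$ back along $s_C$. Hence it equals $s_C^\ast\zeta$. Applying $\widetilde{U_C}$, where $U_C$ is protoadditive since it is finitely complete, gives the point $K[qp]\rightleftarrows B$. This point is the pullback of $X\rightleftarrows B\rtimes C$ along $j_B=k_{p_C}$, so its action is $j_B^\ast\zeta$; this is a small $3\times3$ kernel computation.

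The bijections (1)$\leftrightarrow$(2) now follow by restricting the equivalence to objects with fixed underlying data. The correspondence with (3) comes from reading a point in $\Act_C(\C)$ over $(B,\xi^C_B)$ with kernel $(A,\xi^C_A)$ as a split short exact sequence in $\Act_C(\C)$. Its middle object is $A\rtimes B$, computed in $\C$ because $U_C$ preserves the relevant limits, and it carries some $C$-action $\xi^C_{A\rtimes B}$. Finally, the isomorphism of semi-direct products holds because in all descriptions the same object $X$ of $\C$ appears. In $\Pt_{B\rtimes C}(\C)$ it is $A\rtimes(B\rtimes C)$. In the point over $C$ it is the semi-direct product of its kernel $K[qp]\cong A\rtimes B$ with $C$, namely $(A\rtimes B)\rtimes C$.
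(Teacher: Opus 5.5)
Your proposal is correct and follows essentially the same route as the paper. You compose the isomorphism of Lemma~\ref{prop:equivalence_pt_3} with the points/actions equivalences, and you identify $U_{(B,\xi)}\circ\Psi_\xi$ through the kernel in $\Pt_C(\C)$, which is a pullback along $s_C$. You identify $\widetilde{U_C}\circ\Psi_\xi$ through the pullback square along $j_B=k_{p_C}$, and you read off $A\rtimes(B\rtimes C)\simeq(A\rtimes B)\rtimes C$ from the common middle object $X$. The one addition is your explicit check that the points/actions equivalence applies inside $\Act_C(\C)$, namely that $K_{(B,\xi)}$ is monadic or that $\Act_C(\C)\simeq\Pt_C(\C)$ is semi-abelian; the paper uses this without comment.
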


\begin{proof}
	If $(Y,q,t)= (B\rtimes C,p_C,s_C)$, we can define the equivalence $\Psi_\xi$ as
	\begin{align*}
		\Act_{B\rtimes C}(\C) & \cong \Pt_{B\rtimes C}(\C) \simeq \Pt_{(B\rtimes C,p_B,s_B)} (\Pt_C(\C))\\
		&  \cong \Pt_{(B,\xi)} (\Act_C(\C)) \cong \Act_{(B,\xi)} (\Act_C(\C)).
	\end{align*}
	If $(A,B\rtimes C,\xi^{B\rtimes C}_A)$ is an action and $(X,B\rtimes C,p,s)$ the corresponding point, then $\widetilde{U_{C}} (\Psi_\xi(A,\xi^{B\rtimes C}_A))$ is an action in $\C$, whose corresponding point is the result of applying $\widetilde{K_{C}}$ to the point $((X,q',t'),p,s)$ in $\Pt_{(Y,q,t)}$ determined by $(X,p,s)$. This means that $\widetilde{U_{C}} (\Psi_\xi(A,\xi^{B\rtimes C}_A))$ correspond to the top row of the diagram
	
	\begin{equation}\begin{tikzcd}[row sep=large]
			A \ar[d,equal] \ar[r,"j_A"] & A\rtimes B \sqlab{(\ast)} \ar[r,shift left,"p_B"] \ar[d,"j'"'] & B \ar[d,"j_B"] \ar[l,shift left,"s_B"]\\
			A \ar[d] \ar[r,"j"] & X \ar[d,shift left,"q'"] \ar[r,"p",shift left]& B\rtimes C \ar[l,shift left,"s"] \ar[d,shift left,"p_C"]\\
			0 \ar[r] & C \ar[r,equal] \ar[u,shift left,"t'"] & C \ar[u,shift left,"s_C"]
		\end{tikzcd}\label{eq:3x3_split}\end{equation}
	where each square commutes and each row and column is a split short exact sequence. The square $(\ast)$ is then a pullback, so that $\widetilde{U_{C}} (\Psi_\xi(A,\xi^{B\rtimes C}_A)) = (A,B,j_B^{\ast} \xi^{B\rtimes C}_A)$. On the other hand, since $(C,1_C,1_C)$ is the zero object of $\Pt_C(\C)$, the kernel functor
	\[K_{(B\rtimes C,p_C,s_C)} \colon \Pt_{(B\rtimes C,p_C,s_C)}(\Pt_C(C)) \to \Pt_C(\C)\]
	sends the point $(X,p,s)$ to its pullback along $s_C$
	\[\begin{tikzcd}
		A \ar[d,equal] \ar[r,"j_A"] & A\rtimes C \pb \ar[r,shift left,"p_C"] \ar[d,"s'"'] & C \ar[d,"s_C"] \ar[l,shift left,"s_C"]\\
		A\ar[r,"j"'] & X \ar[r,"p",shift left]& B\rtimes C, \ar[l,shift left,"s"]
	\end{tikzcd}\]
	and thus $U_{(B,\xi)} \circ \Psi_\xi (A, \xi^{B\rtimes C}_A) = (A,s_C^\ast\xi^{B\rtimes C}_A)$.
	
	For three objects $A,B,C$ and three actions $\xi^{B}_{A},\xi^{C}_{A},\xi^{C}_{B}$ in $\C$, the bijection between the actions satisfying (1) and (2) is nothing but the restriction of $\Psi_{\xi^{C}_{B}}$ to the objects whose image under the appropriate functors are $(A,\xi^{B}_A)$ and $(A,\xi^{C}_A)$. The bijection between (2) and (3) is just the equivalence between points and actions in $\Act_C(\C)$. Comparing the middle row and middle column in \eqref{eq:3x3_split} yields the isomorphism $A\rtimes (B \rtimes C)\simeq X \simeq (A\rtimes B) \rtimes C$.
\end{proof}

The isomorphism of semi-direct products above is also natural.
\begin{proposition}
	\begin{enumerate}
		\item Given three morphisms $\alpha\colon A\to A'$, $\beta\colon B\to B'$ and $\gamma\colon C\to C'$, and actions $\xi^{B}_{A}$, $\xi^{C}_{B}$, $\xi^{B'}_{A'}$, $\xi^{C'}_{B'}$, then there is a morphism of actions
		\[(\alpha,\beta\rtimes \gamma)\colon (A, B\rtimes C,\xi^{B\rtimes C}_{A}) \to (A, B'\rtimes C',\xi^{B'\rtimes C'}_{A'})\]
		if and only if there is a corresponding morphism of actions
		\[(\alpha\rtimes \beta,\gamma)\colon (A\rtimes B,C,\xi^{C}_{A\rtimes B})\to (A'\rtimes B',C',\xi^{C'}_{A'\rtimes B'}),\]
		and we have
		\[\begin{tikzcd}
			{A\rtimes (B \rtimes C)} \ar[d,"{\alpha \rtimes (\beta \rtimes \gamma)}"'] \ar[r,"\simeq"] & {(A\rtimes B) \rtimes C} \ar[d,"{(\alpha \rtimes \beta)\rtimes \gamma}"]\\
			{A'\rtimes (B' \rtimes C')} \ar[r,"\simeq"] & {(A'\rtimes B') \rtimes C'.}
		\end{tikzcd}\]
		\item Given three morphisms $\alpha,\beta,\gamma$ from $A,B,C$ respectively to $Z$, actions $\xi^{B}_{A}$, $\xi^{C}_{A}$, and $\xi^{C}_{B}$, and corresponding actions $\xi^{B \rtimes C}_{A}$ and $\xi^{C}_{A\rtimes B}$, then there exist morphisms
		\[\begin{tikzcd}
			B\rtimes C \ar[r,"{[\beta,\gamma\rangle}"] & Z &[2em] \ar[l,"{[\alpha,[\beta,\gamma\rangle\rangle}"']A\rtimes (B \rtimes C),
		\end{tikzcd}\]
		if and only if there exist morphisms
		\[\begin{tikzcd}
			A\rtimes B \ar[r,"{[\alpha,\beta\rangle}"] & Z &[2em] \ar[l,"{[[\alpha,\beta\rangle,\gamma\rangle}"'](A\rtimes B) \rtimes C
		\end{tikzcd}\]
		and we have a commutative diagram
		\[\begin{tikzcd}[column sep=small]
			A\rtimes (B \rtimes C) \ar[dr,"{[\alpha,[\beta,\gamma\rangle \rangle}"']\ar[rr,"\simeq"] & & (A\rtimes B) \rtimes C \ar[dl,"{[[\alpha,\beta\rangle,\gamma \rangle}"]\\
			& Z. &
		\end{tikzcd}\]
	\end{enumerate}
\end{proposition}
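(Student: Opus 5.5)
Both parts reduce to functoriality of the chain of equivalences $\Act\cong\Pt$ and \autoref{prop:equivalence_pt_3}, together with the universal property of \autoref{prop:sdp_UP}.

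For part (1), I would pass to points. A morphism of actions $(\alpha,\beta\rtimes\gamma)$ is the same as a morphism of points from $(X,B\rtimes C,p,s)$ to $(X',B'\rtimes C',p',s')$. Here $X\simeq A\rtimes(B\rtimes C)$ and the morphism of points is some $f\colon X\to X'$ with $p'f=(\beta\rtimes\gamma)p$, $fs=s'(\beta\rtimes\gamma)$, and restricting to $\alpha$ on kernels. Composing with the split epimorphisms $p_C$, $p_{C'}$ and the sections $s_C$, $s_{C'}$ (whose compatibility with $\beta\rtimes\gamma$ is part of the definition of $\beta\rtimes\gamma$), we get $q'_{X'}f=\gamma q'_X$ and $ft'=t'\gamma$. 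So $f$ is also a morphism of points $(X,C,q',t')\to(X',C',q'',t'')$. By the $3\times3$ diagram \eqref{eq:3x3_split}, $f$ restricts on the kernels of $q'$ and $q''$, namely $A\rtimes B\to A'\rtimes B'$. This restriction is compatible with $j_A,s_B,p_B$, so it equals $\alpha\rtimes\beta$. This gives the morphism of actions $(\alpha\rtimes\beta,\gamma)$.

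The converse runs the same way in reverse. Given a morphism of points $g\colon (A\rtimes B)\rtimes C\to(A'\rtimes B')\rtimes C'$ over $\gamma$ with kernel part $\alpha\rtimes\beta$, it induces a map on the quotients $B\rtimes C\to B'\rtimes C'$, obtained as the cokernels of $A\to X$ in the middle column. That map has components $\beta,\gamma$, hence equals $\beta\rtimes\gamma$, and $g$ commutes with $p$ and $s$. In both directions the same morphism $f=g$ appears under the identifications $X\simeq A\rtimes(B\rtimes C)\simeq(A\rtimes B)\rtimes C$. This gives the commutative square, since $\alpha\rtimes(\beta\rtimes\gamma)$ and $(\alpha\rtimes\beta)\rtimes\gamma$ are each characterized as the unique morphism of points with the given kernel and base components (uniqueness by the split short five lemma, or by joint strong epimorphicity of $j,s$).

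For part (2), I would apply \autoref{prop:sdp_UP} twice and compare. If $[\beta,\gamma\rangle$ and $h=[\alpha,[\beta,\gamma\rangle\rangle$ exist, set $k=h\circ(\text{iso})^{-1}\colon (A\rtimes B)\rtimes C\to Z$. Then $k\circ j_{A\rtimes B}$ restricts to $\alpha$ on $A$ and to $\beta$ on $B$. Hence $[\alpha,\beta\rangle$ exists, namely $k j_{A\rtimes B}$, and $k$ has components $[\alpha,\beta\rangle$ and $\gamma$, so $k=[[\alpha,\beta\rangle,\gamma\rangle$. This uses that the iso identifies $j_A$, $j_B$ (via $j_B$ into $B\rtimes C$) and $s_C$ on both sides, which can be read off diagram \eqref{eq:3x3_split}. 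The converse is symmetric: $[\beta,\gamma\rangle$ is obtained by factoring $k$ through the quotient $B\rtimes C$, or as the composite with the section $s$, which embeds $B\rtimes C$ in $X$. Commutativity of the triangle then follows from uniqueness in \autoref{prop:sdp_UP}, because both maps agree on $A$, $B$ and $C$, and these jointly strongly epimorphically cover $X$.

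The main obstacle is bookkeeping rather than new mathematics. One must check that the isomorphism $A\rtimes(B\rtimes C)\simeq(A\rtimes B)\rtimes C$ coming from Theorem \ref{prop:equivalence_act_3} really matches the canonical inclusions of $A$, $B$, $C$ on both sides. In particular, the section $s$ restricted along $j_B$ must agree with $j'\circ s_B$, which I would verify from the commutativity of the squares in \eqref{eq:3x3_split}.
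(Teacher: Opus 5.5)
Your proof is correct. Part (2) is essentially the paper's argument, but part (1) takes a different route.

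\textbf{The paper's route for (1).} The paper proves (1) in one line by applying Theorem~\ref{prop:equivalence_act_3} in the arrow category $\C^{\to}$. Since $\C^{\to}$ is semi-abelian, with $\flat$, actions and semi-direct products computed componentwise, an action of $\beta$ on $\alpha$ in $\C^{\to}$ is exactly a morphism of actions $(\alpha,\beta)$ in $\C$, and its semi-direct product is $\alpha\rtimes\beta$. The isomorphism $\alpha\rtimes(\beta\rtimes\gamma)\simeq(\alpha\rtimes\beta)\rtimes\gamma$ in $\C^{\to}$ is then precisely the required commutative square.

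\textbf{Your route for (1).} You unwind this by hand. A morphism of points over $\beta\rtimes\gamma$ is automatically a morphism of the composite points $(X,p_Cp,ss_C)$ over $\gamma$, and it restricts to $\alpha\rtimes\beta$ on kernels. Conversely, a morphism over $\gamma$ restricting to $\alpha$ on $A$ descends to the cokernels $B\rtimes C\to B'\rtimes C'$.

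\textbf{Comparison.} The paper's argument is shorter and gives naturality for free. It does rely on standard but unstated facts about actions in functor categories. Yours is self-contained and shows directly that each side supplies the compatibility needed to form the semi-direct products on the other side. The cost is several routine checks you only sketch. For example, that $g$ commutes with the sections $s,s'$ follows by testing against the jointly epimorphic pair $j_B,s_C$. Uniqueness of these comparison maps comes from joint epimorphicity of $(j,s)$; the split short five lemma does not give uniqueness.

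\textbf{One slip in (2).} In the converse, $[\beta,\gamma\rangle$ cannot be obtained by factoring $k$ through the quotient $X\to B\rtimes C$, because $k$ restricts to $\alpha$, not $0$, on $A$. Only your second option, precomposing with the section $s$, is valid.
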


\begin{proof}
	\begin{enumerate}
		\item This is nothing but \autoref{prop:equivalence_act_3}, applied to the arrow category of $\C$.
		\item Given the diagram \eqref{eq:3x3_split}, both conditions are equivalent to the existence of $f\colon X\to Z$ whose restrictions to $A,B,C$ must be $\alpha,\beta,\gamma$ respectively, and there is at most one such morphism.
	\end{enumerate}
\end{proof}

\begin{example}
	Given an action $(A,B,\xi^B_A)$, then there exists actions of $B\rtimes B$ and $A\rtimes B$ on $A$, yielding isomorphisms
	\begin{align}
		(A\rtimes B)\rtimes B & \simeq A\rtimes (B\rtimes B) \label{eq:sdp_ABB} \\
		(A\rtimes A)\rtimes B & \simeq A\rtimes (A\rtimes B). \label{eq:sdp_AAB}
	\end{align}
	
	The first one arises from considering the action of $B$ on $A\rtimes B$ obtained by pulling back the conjugation action of $A\rtimes B$ along $s_B\colon B\to A\rtimes B$. Indeed, the split short exact sequence corresponding to $\xi^B_A$ is $B$-equivariant with respect to the actions $\xi^B_A$, $s_B^\ast \chi_{A\rtimes B}$ and $\chi_B$ on $A,A\rtimes B,B$, since we have the commutative diagram
	\[\begin{tikzcd}[column sep=large]
		B\flat A \ar[r,"B\flat j_A"] \ar[d,"s_B\flat A"] \ar[dd,bend right=60,"\xi^{B}_A"'] & B\flat (A\rtimes B) \ar[r,shift left,"B\flat p_B"] \ar[d,"s_B\flat (A\rtimes B)"'] & B\flat B \ar[l,shift left, "B\flat s_B"] \ar[d,equal] \\
		(A\rtimes B)\flat A \ar[r,"(A\rtimes B)\flat j_A"'] \ar[d,"\chi^{A\rtimes B}_A"] & (A\rtimes B)\flat (A\rtimes B) \ar[r,shift left,"p_B\flat p_B"] \ar[d,"\chi_{A\rtimes B}"'] & B\flat B \ar[l,shift left, "s_B\flat s_B"] \ar[d,"\chi_B"]\\
		A \ar[r,"j_A"'] & A\rtimes B \ar[r,shift left,"p_B"]& B; \ar[l,shift left, "s_B"]\end{tikzcd}\]
	in particular, the action $(A,B,\xi^{B}_{A})$ in $\C$ induces an action $((A,\xi^{B}_A), (B,\chi_B), \hat{\xi})$ in $\Act_B(\C)$, such that $\tilde{U}_B(\hat{\xi}) = \xi^{B}_{A}$.
	
	The corresponding split short exact sequences yield a commutative diagram
	\[\begin{tikzcd}[column sep=large]
		A \ar[r,"j_A"] \ar[d,equal] & A\rtimes B \ar[r,shift left,"p_B"] \ar[d,"{(1,0)}"',shift right] & B \ar[l,shift left, "s_B"] \ar[d,"{(1,0)}"',shift right] \\
		A \ar[r,"{(j_A,0)}"] & (A\rtimes B)\times B \ar[r,shift left,"p_B\times B"] \ar[d,"\pi_2",shift left] \ar[u,"\pi_{1}"',shift right,dashed] & B\times B \ar[l,shift left, "s_B\times B"] \ar[d,"\pi_2",shift left] \ar[u,"\pi_{1}"',shift right,dashed] \\
		& B \ar[r,equal] \ar[u,shift left,"{(s_B,1)}"] & B; \ar[u,shift left,"{(1,1)}"]
	\end{tikzcd}\]
	since the two dashed arrows make the upper right square a pullback of split epimorphisms, the corresponding action of $B\rtimes B$ on $A$ is equal to $[1,1\rangle^*\xi^B_A$.
	
	The isomorphism \eqref{eq:sdp_AAB} reflects the fact that the monadic functor $\Act_B(\C)\to \C$ creates limits; indeed, the point $(A\times A, A,\pi_2,(1,1))$ corresponding to the conjugation action of $A$ can be canonically lifted to a point in $\Act_B(\C)$, corresponding to the conjugation action of $(A,\xi^{B}_{A})$ in $\Act_B(\C)$. Moreover, $(A\rtimes A)\rtimes B$ can also be seen as a product in $\Pt_B(\C)$, which means that the square
	\[\begin{tikzcd}
		(A\rtimes A) \rtimes B \ar[r,"p_A\rtimes B"] \ar[d,"{[1,1\rangle}\rtimes B"'] & A \rtimes B \ar[d,"p_B"]\\
		A\rtimes B \ar[r,"p_B"'] & B,
	\end{tikzcd}\]
	is a pullback, so that $p_A\rtimes B\colon (A \rtimes A)\rtimes B \to A\rtimes B$ is one of the projections of the kernel pair of $p_B$, and thus corresponds to the morphism $p_{A\rtimes B}\colon A\rtimes (A\rtimes B)\to A\rtimes B$. Thus the corresponding action of $A\rtimes B$ on $A$ is simply the conjugation action, since $j_A\colon A\to A\rtimes B$ is normal.
\end{example}

\begin{example}
	If $X,Y$ are two objects of $\C$, then $Y+X\simeq (Y\flat X)\rtimes_{\mu} X$. In particular, given three objects $A,B,C$ of $\C$, then we have isomorphisms
	\begin{align*}
		C+B & \simeq (C\flat B)\rtimes C\\
		C+B+A & \simeq  ((C+B)\flat A)\rtimes (C+B)\\
		& \simeq  ((C+B)\flat A)\rtimes (C\flat B)\rtimes C\\
		& \simeq (C\flat (B+A))\rtimes C.\end{align*}
	This can also be seen as a consequence of the isomorphism $C\flat (B+A)\simeq (C+B)\flat A)\rtimes (C\flat B)$, induced by the split short exact sequence \eqref{eq:canonical_sses_cba} in $\Act_C(\C)$.
\end{example}

\subsection{Actions on a semi-direct product}

The following definition generalizes the one introduced in \cite{CC91} for the case of groups and \cite{CC17} for the case of Lie algebras :
\begin{definition}\label{def:sdp_3}
	Let $X$ be an object, and $A,B,C$ be subobjects of $X$. We say that $X$ is a ternary semi-direct product of $A,B,C$ if
	\begin{itemize}
		\item $X=A\vee B\vee C$;
		\item $A$ and $A\vee B$ are normal in $X$;
		\item $A\wedge B$ and $(A\vee B)\wedge C$ are trivial.
	\end{itemize}
\end{definition}

\begin{example}\label{ex:3sdp_assoc}
	Given an action of $B\rtimes C$ on $A$, $A\rtimes (B\rtimes C)$ is a ternary semi-direct product of $A$, $B$, and $C$. Indeed, $A$ is normal, $A\wedge B \leq A\wedge (B\rtimes C) =0$, and since $A\rtimes (B\rtimes C)\simeq (A\rtimes B) \rtimes C$ we also find that $A\vee B = A\rtimes B$ is normal, and $(A\rtimes B)\wedge C=0$.
\end{example}

\begin{example}\label{ex:coproduct}
	If $(B,C,\xi)$ is an action, then the binary coproduct $C+B$ can be seen as a ternary semi-direct product of its subobjects $K[\xi]$, $B$ and $C$. Indeed, $K[\xi]\vee B\vee C = B\vee C = C+B$, $K[\xi]=K[q_\xi]$ is normal in $C+B$, and the existence of the two short exact sequences
	\[\begin{tikzcd}
		K[\xi] \ar[r,"k_\xi"] & C\flat B \ar[r,"\xi",shift left] & B \ar[l,"\eta^{C}_B",shift left]
	\end{tikzcd} \qquad \begin{tikzcd}
		C\flat B \ar[r,"\kappa_{C,B}"] & C+B \ar[r,"{[1,0]}",shift left] & C \ar[l,"\iota_C",shift left]
	\end{tikzcd}\]
	shows that $K[\xi]\vee B=C\flat B$, which is again normal in $C+B$, and $K[\xi]\wedge B$ and $C\flat B\wedge C$ are trivial. We note that it cannot be seen as an instance of the previous example, since $q_\xi\colon C+B\to B\rtimes C$ is not a split epimorphism.
\end{example}

We also note that not all objects of the form $(A\rtimes B)\rtimes C$ satisfy the definition of a ternary semi-direct product, since in general a composition of normal monomorphism $A\to A\rtimes B\to (A\rtimes B)\rtimes C$ is not necessarily normal.

\begin{proposition}\label{prop:3sdp_characterizations}
	Let $A,B,C,X$ be objects of $\C$. Then the following conditions are equivalent :
	\begin{enumerate}[(1)]
		\item $X$ is a ternary semi-direct product of $\begin{tikzcd}[column sep=small,cramped] A\ar[r,"a"]& X\end{tikzcd}$, $\begin{tikzcd}[column sep=small,cramped] B\ar[r,"b"]& X\end{tikzcd}$, $\begin{tikzcd}[column sep=small,cramped] C\ar[r,"c"]& X\end{tikzcd}$;
		\item there exist an action $(A,B,\xi^B_A)$, a lift of $\begin{tikzcd}[cramped]A\ar[r,"j_A"] & A\rtimes B \ar[r,"p_{B}"] & B\end{tikzcd}$ to a short exact sequence in $\Act_C(\C)$, and an isomorphism $[[a,b\rangle ,c\rangle \colon (A\rtimes B) \rtimes C\to X$;
		\item there exists an action $(B,C,\xi^{C}_{B})$ and a morphism $q\colon (X,p,c)\to (B\rtimes C,p_C,s_C)$ in $\Pt_C(\C)$, such that $K_C(q)$ is a split epimorphism and $K[q]\simeq A$ in $\C$.
		\item there exists an action $(B,C,\xi^{C}_{B})$ and a factorization of $q_{B\rtimes C} \colon C+B\to B\rtimes C$ through a morphism $q\colon X\to B\rtimes C$ with kernel $A$.
	\end{enumerate}
\end{proposition}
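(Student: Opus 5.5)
I will prove the cycle of implications $(1)\Rightarrow(2)\Rightarrow(3)\Rightarrow(4)\Rightarrow(1)$, using \autoref{prop:sdp_int} to recognize joins as semi-direct products and \autoref{prop:sdp_UP} for maps out of them.

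For $(1)\Rightarrow(2)$: since $A$ is normal in $X$, it is closed under conjugation by $B$. Because $A\wedge B=0$, \autoref{prop:sdp_int} gives an action $\xi^B_A$ and an isomorphism $[a,b\rangle\colon A\rtimes B\to A\vee B$. Similarly, $A\vee B$ is normal in $X$ and meets $C$ trivially, so $X=(A\vee B)\vee C\simeq (A\rtimes B)\rtimes C$ via $[[a,b\rangle,c\rangle$.

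It remains to lift $A\to A\rtimes B\to B$ to $\Act_C(\C)$. The monomorphisms $A\to X$ and $A\vee B\to X$ are normal, so they are equivariant for the conjugation actions of $X$, and pulling back along $c$ gives $C$-actions on $A$ and on $A\rtimes B$ with $j_A$ equivariant. For $B$, take the cokernel $X\to X/A$. The composite $B\to X/A$ is monic, since its kernel is $A\wedge B=0$. Its image $(A\vee B)/A$ is normal in $X/A$, so $B$ inherits a $C$-action for which $p_B$ is equivariant. Finally, $U_C$ reflects split short exact sequences, which gives the lift.

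For $(2)\Rightarrow(3)$: the lifted sequence is a point in $\Act_C(\C)$. By \autoref{prop:equivalence_act_3}, case (3), together with its functoriality, $p_B$ induces $q=p_B\rtimes C\colon (A\rtimes B)\rtimes C\to B\rtimes C$. This is a morphism in $\Pt_C(\C)$, split by $s_B\rtimes C$, with kernel $A$ and with $K_C(q)=p_B$ split.

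For $(3)\Rightarrow(4)$: take the morphism $[c,b'\rangle$-induced map $C+B\to X$, where $b'$ is the splitting of $K_C(q)$ followed by the inclusion $K[p]\to X$. Composing with $q$ gives $\iota_C\mapsto s_C$ and $\iota_B\mapsto j_B$, which is exactly $q_{B\rtimes C}$.

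For $(4)\Rightarrow(1)$: set $A=K[q]$, normal in $X$. Let $b,c$ be the images of $\iota_B,\iota_C$ under the factoring map $C+B\to X$. These are monic because $qb=j_B$ and $qc=s_C$ are monic, and their intersection with $A$ is trivial for the same reason. Moreover, $A\vee B=q^{-1}(B)$, the preimage of the normal subobject $j_B$, hence it is normal in $X$, and $q^{-1}(B)\wedge C=c^{-1}q^{-1}(B)=s_C^{-1}(B)=0$. The join $A\vee B\vee C=X$ requires $q$ together with $A$ to be covered: since $q$ is a regular epimorphism with kernel $A$ and $B\vee C$ maps onto $B\rtimes C$, we get $A\vee(B\vee C)=q^{-1}(q(B\vee C))=X$.

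\textbf{Main obstacle.} I expect the hardest step to be the lifting in $(1)\Rightarrow(2)$, namely checking that the $C$-action on $B$ obtained via $X/A$ is compatible with $p_B$. I would handle it by comparing kernel pairs: $(A\vee B)\to (A\vee B)/A\simeq B$ is the restriction of $X\to X/A$, and the cokernel map is $X$-equivariant for the conjugation actions.

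A further subtlety in $(4)$ is whether the factoring morphism $C+B\to X$ is given as part of the data; I take it to be so, with $X$ generated appropriately. If it is not given, the join $A\vee B\vee C=X$ must instead be argued from $q$ being regular epi and $B\vee C$ mapping onto $B\rtimes C$.
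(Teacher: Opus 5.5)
Your proof is correct, but it is organized differently from the paper's. The paper proves $(1)\Leftrightarrow(2)$ directly. After \autoref{prop:sdp_int} identifies $X$ with $(A\rtimes B)\rtimes C$, it invokes Proposition~3.2 of \cite{Met17}: the composite $A\to A\rtimes B\to(A\rtimes B)\rtimes C$ is normal in $\C$ if and only if $j_A\rtimes C$ is normal in $\Pt_C(\C)$, with matching cokernels. This translates normality of $A$ into the existence of $\xi^C_B$ making $p_B$ equivariant. The paper then proves $(2)\Leftrightarrow(3)$ and $(3)\Leftrightarrow(4)$ separately.

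You instead run a cycle, which changes two things.
\begin{itemize}
\item In $(1)\Rightarrow(2)$ you build $\xi^C_B$ by hand, realizing $B$ as the normal subobject $\pi(A\vee B)$ of $X/A$. Your ``main obstacle'' is settled cleanly as follows: $\pi\circ[a,b\rangle=(\pi b)\circ p_B$ (check on $j_A$ and $s_B$), both $\pi\circ[a,b\rangle$ and $\pi b$ are equivariant for the restricted conjugation actions, and $\pi b$ is a monomorphism, so it can be cancelled.
\item You close the cycle by checking \autoref{def:sdp_3} directly from (4). There normality of $A=K[q]$ is automatic, $A\vee B=q^{-1}(j_B)$ is normal, $(A\vee B)\wedge C=s_C^{-1}(j_B)=0$, and $X=q^{-1}q(B\vee C)$.
\end{itemize}

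This route avoids the citation to \cite{Met17} altogether. It also shows that the direction $(2)\Rightarrow(1)$ needs no external input, since $A$ is simply the kernel of $p_B\rtimes C$. The cost is that you rely on standard semi-abelian facts that you leave implicit:
\begin{itemize}
\item regular images of normal subobjects are normal;
\item $q^{-1}q(S)=S\vee K[q]$ for a regular epimorphism $q$;
\item $q$ is a regular epimorphism, which you should justify from $q\circ[c,b]=q_{B\rtimes C}$.
\end{itemize}
The paper's route, in exchange, shows that normality of the composite in $\C$ is exactly normality in $\Pt_C(\C)$. This is what underlies its remark that a composite $A\to A\rtimes B\to(A\rtimes B)\rtimes C$ of normal monomorphisms need not be normal.

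One correction. Two of your claims are false in general:
\begin{itemize}
\item that the lift in (2) is a \emph{split} short exact sequence in $\Act_C(\C)$;
\item that $q=p_B\rtimes C$ is split by $s_B\rtimes C$.
\end{itemize}
Both presuppose that $s_B$ is $C$-equivariant, which holds exactly when $\phi=0$, i.e.\ when $X\simeq A\rtimes(B\rtimes C)$. It fails for the Heisenberg group in the last section, where $\pi_2\colon X^2\to X$ has no equivariant section. Neither claim is used, so just delete them. Condition (2) only asks for a short exact sequence in $\Act_C(\C)$, which $U_C$ does reflect, and condition (3) only asks that $K_C(q)=p_B$ split.
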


\begin{proof}
	By \autoref{prop:sdp_int}, condition (1) is equivalent to the fact that $A\vee B \simeq A\rtimes B$ and $X\simeq (A\rtimes B) \rtimes C$, with the composition $\begin{tikzcd}[cramped]A\ar[r,"j_A"] & A\rtimes B \ar[r,"j_{A\rtimes B}"] & (A\rtimes B) \rtimes C\end{tikzcd}$ being normal. This condition implies that $A$ is closed under conjugation in $(A\rtimes B)\rtimes C$, and thus closed in $A\rtimes B$ under the action of $C$, so that we have an action of $C$ on $A$ such that $j_A\colon A\to A\rtimes B$ is $C$-equivariant. By Proposition 3.2 in \cite{Met17}, the composition $j_{A\rtimes B}j_A$ is normal in $\C$ if and only if $j_A\rtimes C \colon (A\rtimes C,p_C',s_C')\to ((A\rtimes B)\rtimes C, p_C,s_C)$ is normal in $\Pt_C(\C)$, and the cokernel of $j_A\rtimes C$ in $\Pt_C(\C)$ coincides with the cokernel of $j_A\rtimes C\circ j_A'$ in $\C$; thus $A$ is normal in $(A\rtimes B)\rtimes C$ if and only if there is also an action $(B,C,\xi^{C}_B)$ such that
	\[\begin{tikzcd} (A,\xi^{C}_A) \ar[r,"j_A"] & (A\rtimes B,\xi) \ar[r,"p_B"]& (B,\xi^{C}_B)\end{tikzcd}\]
	is a short exact sequence in $\Act_C(\C)$. Thus we have proven that (1) and (2) are equivalent.
	
	Furthermore, the morphism $p_B\rtimes C$ is the cokernel of $A\to (A\rtimes B)\rtimes C$ in $\C$, so that $K[p_B\rtimes C]\simeq A$, and $K_C(p_B\rtimes C) = p_B$ is a split epimorphism, which proves that (2) implies (3). Conversely, if $q$ is a morphism $(X,p,s)\to (B\rtimes C,p_C,s_C)$ in $\Pt_C(\C)$ and $K_C[q]\colon K[p]\to B$ is a split epimorphism in $\C$, then $K[p]\simeq A\rtimes B$, where $A\simeq K[q]$. Furthermore, $K_C[q]$ is a morphism in $\Act_C(\C)$, and so is its kernel, which proves that (3) implies (2).
	
	If $q\colon X\to B\rtimes C$ is a morphism in $\C$, then taking kernels gives a diagram
	\[\begin{tikzcd}
		K[p_C q] \sqlab{(\ast)} \ar[r,"k_{p_C q}"] \ar[d,"K_C(q)"'] & X \ar[d,"q"] \ar[r,shift left,"p_C q"] & C \ar[d,equal] \ar[l,shift left,"c",dashed]\\
		B \ar[r,"j_B"'] & B\rtimes C \ar[r,shift left,"p_C"] & C, \ar[l,shift left,"s_C",dashed]
	\end{tikzcd}\]
	where the rows are short exact sequences, and the square $(\ast)$ is a pullback. As a consequence, $q$ is a morphism in $\Pt_C(\C)$ if and only if there exists $c\colon C\to X$ such that $qc=s_C$, and $K_C(q)$ is a split epimorphism if and only if there exists $b\colon B\to X$ such that $qb = j_B$. These two conditions are satisfied if and only if $q_{B\rtimes C} = [s_C,j_B] = q[c,b]$, and thus the conditions (3) and (4) are equivalent.
\end{proof}

A ternary semi-direct product is thus determined by a diagram
\begin{equation}\begin{tikzcd}[row sep=large]
		A \ar[d,equal] \ar[r,"j_A"] & A\rtimes B \sqlab{(\ast)} \ar[r,shift left,"p_B"] \ar[d,"{[a,b\rangle}"'] & B \ar[d,"j_B"] \ar[l,shift left,"s_B"]\\
		A \ar[d] \ar[r,"a"] & X \ar[d,shift left,"p_Cq"] \ar[r,"q"]& B\rtimes C \ar[d,shift left,"p_C"]\\
		0 \ar[r] & C \ar[r,equal] \ar[u,shift left,"c"] & C \ar[u,shift left,"s_C"]
	\end{tikzcd}\label{eq:3x3_part_split}\end{equation}
where each row and column is a short exact sequence; in particular, the square $(\ast)$ is a pullback.

\begin{proposition}
	Let $F\colon \C\to \D$ be a protoadditive functor between semi-abelian categories. If $X$ is a ternary semi-direct product of $\begin{tikzcd}[column sep=small,cramped] A\ar[r,"a"]& X\end{tikzcd}$, $\begin{tikzcd}[column sep=small,cramped] B\ar[r,"b"]& X\end{tikzcd}$, $\begin{tikzcd}[column sep=small,cramped] C\ar[r,"c"]& X\end{tikzcd}$ in $\C$, then $F(X)$ is a ternary semi-direct product of $\begin{tikzcd}[column sep=scriptsize,cramped] F(A)\ar[r,"F(a)"]& F(X)\end{tikzcd}$, $\begin{tikzcd}[column sep=scriptsize,cramped] F(B)\ar[r,"F(b)"]& F(X)\end{tikzcd}$, $\begin{tikzcd}[column sep=scriptsize,cramped] F(C) \ar[r,"F(c)"]& F(X)\end{tikzcd}$.
\end{proposition}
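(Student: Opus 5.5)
The plan is to work with the characterization of ternary semi-direct products through the diagram \eqref{eq:3x3_part_split}, that is, condition (4) (or equivalently (3)) of \autoref{prop:3sdp_characterizations}, since that diagram consists only of split short exact sequences, pullbacks and one short exact sequence. Protoadditive functors are designed to preserve exactly this kind of data.

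\textbf{Step 1: transport the actions and the triple-product form.} Starting from $X$, \autoref{prop:3sdp_characterizations}(2) gives an action $\xi^B_A$, a lift of $A\to A\rtimes B\to B$ to a split short exact sequence in $\Act_C(\C)$, and an isomorphism $(A\rtimes B)\rtimes C\simeq X$. Apply $F$.
\begin{itemize}
\item By the earlier proposition on protoadditive functors, $F(A\rtimes B)\simeq F(A)\rtimes F(B)$ and $F(X)\simeq F(A\rtimes B)\rtimes F(C)$.
\item The restricted lifts $\widetilde F_C$ send the sequence in $\Act_C(\C)$ to a sequence in $\Act_{F(C)}(\D)$.
\item Applying $F$ to the columns and rows of \eqref{eq:3x3_part_split} gives split short exact sequences $F(A)\to F(A\rtimes B)\rightleftarrows F(B)$ and $F(A\rtimes B)\to F(X)\rightleftarrows F(C)$. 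It also gives a split short exact sequence $F(B)\to F(B\rtimes C)\rightleftarrows F(C)$, together with $F(q)$ making the corresponding squares commute.
\item $F$ preserves pullbacks along split epimorphisms, as noted in the proof of the earlier proposition. Hence the square $(\ast)$ remains a pullback, so $F(K_C(q))=K_{F(C)}(F(q))$ is still split by $F(s_B)$.
\end{itemize}

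\textbf{Step 2: identify $F(A)$ as the kernel of $F(q)$.} Since $(\ast)$ is a pullback, $F(q)$ and $F(p_B)$ have the same kernel in $\D$. Because $p_B$ is split, $F(p_B)$ is split and has kernel $F(A)$ by protoadditivity. This places $F(X)$, $F(q)$ and $F(c)$ in the situation of \autoref{prop:3sdp_characterizations}(3):
\begin{itemize}
\item $F(q)\colon (F(X),F(p_Cq),F(c))\to (F(B)\rtimes F(C),\dots)$ is a morphism in $\Pt_{F(C)}(\D)$;
\item its restriction to kernels is a split epimorphism;
\item its kernel is $F(A)$.
\end{itemize}
Condition (3) then yields that $F(X)$ is a ternary semi-direct product.

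\textbf{Step 3: check the subobjects.} It remains to confirm that the subobjects produced by (3) are $F(a)$, $F(b)$, $F(c)$. The kernel inclusion of $F(q)$ is $F(a)$, because $F(a)=F([a,b\rangle)F(j_A)$ and kernels are compatible with the pullback. The splitting of $B$ is $F(b)=F([a,b\rangle s_B)$, and $C$ embeds via $F(c)$. All three are monomorphisms: each is a composite of kernels of split epimorphisms (preserved by $F$) and sections.

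The main obstacle is that $q$ itself is not split, so protoadditivity gives no direct information about $F(q)$. The hard point is therefore arguing that $F(q)$ still has kernel $F(A)$, and the pullback $(\ast)$ is the device I would use to reduce this to the split map $p_B$.
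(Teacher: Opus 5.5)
Your proof is correct and is essentially the paper's argument: apply $F$ to diagram \eqref{eq:3x3_part_split}, let protoadditivity preserve the split rows and columns, and then recover the middle row. The paper cites the $3\times 3$ lemma for that last step, whereas you compute $K[F(q)]\simeq F(A)$ from the pullback $(\ast)$ and then invoke condition (3) of \autoref{prop:3sdp_characterizations}.

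One small correction: $(\ast)$ is not a pullback along a split epimorphism, since neither $q$ nor $j_B$ is split, so the fact you cite does not apply directly. $F(\ast)$ is still a pullback for a different reason: it is the comparison square between the kernels $F([a,b\rangle)=\ker F(p_Cq)$ and $F(j_B)=\ker F(p_C)$ of the morphism $F(q)$ over $F(C)$, and any such square is automatically a pullback.
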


\begin{proof}
	Since $F$ is protoadditive, by definition, it preserves each column and the top and bottom row of diagram \eqref{eq:3x3_part_split}. Since the $3\times 3$ lemma holds in every semi-abelian category, $F$ also preserves the middle row, and thus $F(X)$ is a ternary semi-direct product of $F(A)$, $F(B)$, and $F(C)$.
\end{proof}

As a consequence of the last condition of \autoref{prop:3sdp_characterizations}, any ternary semi-direct product induces a morphism of short exact sequences
\[\begin{tikzcd}
	{K[\xi^C_B]} \ar[d,"\phi"'] \ar[r,"k_{B\rtimes C}"] & C+B \ar[r,"q_{B\rtimes C}"] \ar[d,"{[c,b]}"] & B\rtimes C \ar[d,equal]\\
	A \ar[r,"a"']& X  \ar[r,"q"']& B\rtimes C.
\end{tikzcd}\]
Thus $X$ can be constructed from the morphism $\phi$ and the induced action $[c,b]^\ast\chi^{X}_A$, thanks to the following result :

\begin{proposition}[\cite{CMM13}]\label{prop:pushforward}
	Given a short exact sequence
	\[\begin{tikzcd}
		K\ar[r,"k"] & X \ar[r,"q"] & Q
	\end{tikzcd}\]
	and an $X$-action $(L,\xi^X_L)$, together with a morphism of $X$-actions $\phi\colon (K,\chi^{X}_K)\to (L,\xi^X_L)$. If the actions $[k,1\rangle^{\ast}\xi^{X}_L$ and $(\phi\rtimes X)^\ast\chi^{L\rtimes X}_L$ of $K\rtimes X$ on $L$ coincide, then there exists a unique object $Y$ with a morphism of short exact sequence
	\[\begin{tikzcd}
		K\ar[r,"k"] \ar[d,"\phi"'] & X \ar[r,"q"] \ar[d,"f"] & Q \ar[d,equal]\\
		L\ar[r,"l"'] & Y \ar[r,"q'"'] & Q.
	\end{tikzcd}\]
\end{proposition}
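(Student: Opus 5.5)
We construct $Y$ as a quotient of the semi-direct product $L\rtimes_{\xi^X_L} X$, which is the standard "pushforward along an equivariant map" construction. On $L\rtimes X$ we have $j_L\colon L\to L\rtimes X$ and $s_X\colon X\to L\rtimes X$. We compare the two morphisms $j_L\phi$ and $s_X k$ from $K$ to $L\rtimes X$. In groups this corresponds to quotienting by the elements $(\phi(\kappa)^{-1},k(\kappa))$. The natural categorical candidate is the morphism
\[ \theta = [\,j_L\phi\,,\ ?\,\rangle \]
out of a semi-direct product of $K$ with something, or more simply the "anti-diagonal" obtained as follows. Consider the pair $(\phi, k)$ as a morphism of actions $(K,X,\chi^X_K)\to (L\rtimes X, \ldots)$. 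Concretely, we use \autoref{prop:sdp_UP} to build $\langle\phi,k\rangle$-type maps $K\rtimes X\to L\rtimes X$.

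\textbf{Step 1.} We show that $(\phi,1_X)$ is a morphism of actions $(K,X,\chi^X_K)\to(L,X,\xi^X_L)$, which is the equivariance hypothesis. This gives $\phi\rtimes X\colon K\rtimes X\to L\rtimes X$. We also have $[k,1\rangle\colon K\rtimes X\to X$, which exists by \autoref{prop:sdp_UP} since $\chi^X_K$ is the restriction of conjugation. The hypothesis on the two actions of $K\rtimes X$ on $L$ then says exactly that $L$, viewed in $L\rtimes X$ via $j_L$, is acted on compatibly. Using the pullback $(\phi\rtimes X)^*$ of the conjugation action $\chi^{L\rtimes X}_L$, we obtain a commutative square of points over $K\rtimes X$. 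The key consequence I aim for is that the composite $\psi := s_X k \cdot (j_L\phi)^{-1}$, understood internally, is a well-defined morphism $K\to L\rtimes X$ whose image is a \emph{normal} subobject. Internally, I would realise $\psi$ as the restriction to the kernel $K[[k,1\rangle]\cong K$ (the anti-diagonal copy of $K$ in $K\rtimes X$) of $\phi\rtimes X$.

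\textbf{Step 2: normality of the image.} This is the main obstacle. I would prove that $\psi$ is normal by showing that its image is closed under the conjugation action of $L\rtimes X$. It suffices to check closure under $L$ and under $X$ separately, since these jointly strongly generate $L\rtimes X$.

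For $X$, closure follows from $X$-equivariance of $\phi$ and the normality of $k$.

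For $L$, closure is exactly where the coincidence $[k,1\rangle^*\xi^X_L=(\phi\rtimes X)^*\chi^{L\rtimes X}_L$ is used. The action of elements of $K$ through $k$ on $L$ agrees with conjugation by $\phi(K)$. Hence conjugating the anti-diagonal by $L$ lands back in the anti-diagonal.

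To make this precise categorically, I would express the required $L\rtimes X$-conjugation closure as the existence of a factorisation of $\chi_{L\rtimes X}\circ((L\rtimes X)\flat\psi)$ through $\psi$. I would then check it after precomposing with the jointly epimorphic data coming from $L\flat$ and $X\flat$, which is the delicate bookkeeping step.

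\textbf{Step 3: the quotient and uniqueness.} Define $Y$ as the cokernel of $\psi$, with $f$ the composite $X\to L\rtimes X\to Y$ and $l$ the composite $L\to L\rtimes X\to Y$. Since $L\cap \psi(K)=0$, the map $l$ is monic. The quotient $p\colon L\rtimes X\to X\to Q$ kills $\psi(K)$, because $qk=0$, so it induces $q'\colon Y\to Q$ with kernel $l$. Commutativity $fk=l\phi$ holds by construction. Finally, for uniqueness, take any $Y'$ with such a morphism of sequences. Its maps $l'$ and $f'$ induce $[l',f'\rangle\colon L\rtimes X\to Y'$ via \autoref{prop:sdp_UP}, which kills $\psi$ and is therefore an isomorphism from $Y$ by the short five lemma.
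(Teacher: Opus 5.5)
Your construction is the right one: $\psi=(\phi\rtimes X)\circ\ker[k,1\rangle$ is exactly the normalization of the relation $L\rtimes(K\rtimes X)\rightrightarrows L\rtimes X$ that the paper coequalizes. Steps 1 and 3 are essentially fine once $\psi$ is known to be normal.

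\textbf{The gap is Step 2.} This is the only place where the hypothesis on the two actions of $K\rtimes X$ on $L$ is used. Your principle that a subobject closed under conjugation by $L$ and by $X$ is closed under conjugation by $L\rtimes X$ is not available in an arbitrary semi-abelian category. The precomposition you propose does not work: $j_L\flat K$ and $s_X\flat K$ are not jointly epimorphic onto $(L\rtimes X)\flat K$. This already fails in groups. There $(L\rtimes X)\flat K$ is the free product of copies of $K$ indexed by all elements of $L\rtimes X$, and these two maps only reach the copies indexed by $L\cup X$. What makes the reduction valid in groups is iterated conjugation, i.e. precomposing with $\nu^{j_L,s_X}_K\colon L\flat X\flat K\to (L\rtimes X)\flat K$. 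By \autoref{prop:AC_nu}, however, this is a regular epimorphism for all such data precisely when $\C$ is algebraically coherent. So as it stands your argument covers only algebraically coherent categories, and even there the $L$-closure is only sketched via element-wise intuition.

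\textbf{The missing idea} is to let the hypothesis produce a relation, rather than checking closure by hand.
\begin{itemize}
\item Because the two actions of $K\rtimes X$ on $L$ coincide, there is a single semi-direct product $L\rtimes(K\rtimes X)$. On it both $d_0=L\rtimes[k,1\rangle$ and, by \autoref{prop:sdp_UP}, $d_1=[j_L,\phi\rtimes X\rangle$ are defined, with common section $L\rtimes s_X$.
\item These maps lie over $[k,1\rangle$ and $p_X$ respectively, which form the kernel pair of $q$, and both are the identity on the kernels $L$. Hence the comparison squares are pullbacks, and $(d_0,d_1)$ is jointly monic.
\item A reflexive relation in a Mal'tsev category is an equivalence relation. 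Its normalization $d_1\circ\ker d_0=d_1\circ s_{K\rtimes X}\circ\ker[k,1\rangle=\psi$ is therefore normal.
\end{itemize}
The paper simply coequalizes $(d_0,d_1)$ and uses Barr--Kock to see that $l$ is the kernel of $q'$; this replaces your Step 3.

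\textbf{Uniqueness.} In your uniqueness argument, $[l',f'\rangle$ exists only if conjugation in $Y'$ along $f'$ induces $\xi^X_L$ on $L$. This condition must be part of the uniqueness claim: for $K=0$, every split extension of $Q$ by $L$ would otherwise qualify. The paper's proof does not address uniqueness at all.
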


To make this more self-contained, we include a proof specific to the case of pushforward of a normal monomorphism.

\begin{proof}
	Let $L\rtimes (K\rtimes X)$ be the semi-direct product associated to the action $[k,1\rangle^* \xi^X_L = (\phi\rtimes X)^*\chi^{L\rtimes X}_L$. We then have a pair of morphisms $L\rtimes [k,1\rangle$ and $[j_L,\phi\rtimes X\rangle$ $L\rtimes (K\rtimes X)\to L\rtimes X$, which admit a common section $L\rtimes s_X\colon L\rtimes X\to L\rtimes (K\rtimes X)$. Let $[l,f\rangle\colon L\rtimes X\to Y$ be their coequalizer, so that we have a commutative diagram
	\[\begin{tikzcd}
		L\ar[r,equal] \ar[d,"j_L",swap] &[2em] L\ar[r,equal] \ar[d,"j_L",swap] & L\ar[d,"l"]\\
		L\rtimes (K\rtimes X) \ar[r,,"{L\rtimes [k,1\rangle}",shift left] \ar[d,"p_{K\rtimes X}",swap] \ar[r,"{[j_L,\phi\rtimes X\rangle}",swap,shift right] & L\rtimes X \ar[d,"p_X",swap]  \ar[r,"{[l,f\rangle}"]& Y \ar[d,"q'"] \\
		K\rtimes X \ar[r,,"{[k,1\rangle}",shift left] \ar[r,"{p_X}",swap,shift right] &[2em] X \ar[r,"q"']& Q,\end{tikzcd}\]
	where $q'$ is the unique arrow making the bottom right square commute. Since the two projections $p_{K\rtimes X}$ and $p_X$ have isomorphic kernels, the two bottom squares on the left side are pullbacks. By construction, the bottom row is the kernel pair of $q$, and thus $L\rtimes [k,1\rangle$ and $[j_L,\phi\rtimes X\rangle$ are jointly monic, so that the top row is the kernel pair of its coequalizer $[l,f\rangle$. By the so-called Barr-Kock theorem, the right-hand square is also a pullback, so that $l\colon L\to Y$ is the kernel of $q'$. Since $q$ and $p_X$ are regular epimorphisms, $q'$ is one as well, and thus the right column in the diagram above is a short exact sequence. Furthermore, we have the identities :
	\[[l,f\rangle \circ (L\rtimes [k,1\rangle) \circ s_{K\rtimes X}\circ j_K = [l,f\rangle \circ s_X \circ [k,1\rangle \circ j_K = f\circ k \]
	\[[l,f\rangle \circ [j_L,\phi\rtimes X\rangle \circ s_{K\rtimes X}\circ j_K = [l,f\rangle \circ (\phi\rtimes X) \circ j_K = [l,f\rangle \circ j_L \circ  \phi = l\circ \phi,\]
	so that $\phi$ and $f$ determine a morphism of short exact sequences.
\end{proof}

\begin{definition}
	A \emph{ternary action} on a triple $(A,B,C)$ of objects of $\C$ is a triple $(\xi^{C}_B,\xi^{C+B}_A,\phi)$, such that
	\begin{itemize}
		\item $(B,C,\xi^{C}_B)$ and $(A,C+B,\xi^{C+B}_A)$ are actions,
		\item $\phi$ is a morphism $K[\xi^{C}_{B}]\to A$, which is $(C+B)$-equivariant with respect to $\chi^{C+B}_K$ and $\xi^{C+B}_A$ and such that
		\begin{equation}[k_{B\rtimes C},1\rangle^{\ast}\xi^{C+B}_A = (\phi\rtimes (C+B))^\ast\chi^{A\rtimes (C+B)}_A.\label{eq:compat_act_3sdp}\end{equation}
	\end{itemize}
	
	The category $\Act^3(\C)$ is the category whose objects are triples $(A,B,C)$ endowed with a ternary action, and whose morphisms $(A,B,C,\xi^{C}_B,\xi^{C+B}_A,\phi) \to (A',B',C',\xi^{C'}_{B'},\xi^{C'+B'}_{A'},\phi')$ are triples $(\alpha,\beta,\gamma)$ such that $(\beta,\gamma)\colon (B,C,\xi^{C}_B)\to (B',C',\xi^{C'}_{B'})$ and $(\alpha,\gamma+\beta)\colon (A,C+B,\xi^{C+B}_A) \to (A',C'+B',\xi^{C'+B'}_{A'})$ are morphisms of actions, and the diagram
	\[\begin{tikzcd}
		K[\xi^{C}_{B}] \ar[r,"k^{\gamma}_{\beta}"] \ar[d,"\phi"'] & K[\xi^{C'}_{B'}] \ar[d,"\phi'"]\\
		A \ar[r,"\alpha"'] & A'
	\end{tikzcd}\]
	commutes (where $k^{\gamma}_{\beta}$ is the restriction of $\gamma\flat \beta$ to the kernels of the actions, which is well defined since $(\beta,\gamma)$ is a morphism of actions).
\end{definition}

\begin{proposition}\label{prop:3sdp_colimit}
	Given three objects $A,B,C$ of $\C$, there is a bijection between :
	\begin{itemize}	
		\item pairs $(\xi^{B}_A,\xi^{C}_{A\rtimes B})$ of actions such that $\xi^{C}_{A\rtimes B}$ induces two actions $\xi^{C}_A,\xi^{C}_B$, making
		\[\begin{tikzcd} (A,\xi^{C}_A) \ar[r,"j_A"] & (A\rtimes B,\xi^{C}_{A\rtimes B}) \ar[r,"p_B"]& (B,\xi^{C}_B)\end{tikzcd}\]
		a short exact sequence in $\Act_C(\C)$;
		\item ternary actions $(\xi^{C}_B,\xi^{C+B}_A,\phi)$ on $(A,B,C)$.
	\end{itemize}
	Moreover, a pair $(\xi^{B}_A,\xi^{C}_{A\rtimes B})$ and a ternary action $(\xi^{C}_B,\xi^{C+B}_A,\phi)$ as above determine the same actions $\xi^{C}_B$, $\xi^{C}_{A}= \iota_C^\ast\xi^{C+B}_A$ and $\xi^{B}_{A}= \iota_B^\ast\xi^{C+B}_A$.
	
	The corresponding ternary semi-direct product $(A\rtimes B)\rtimes C$ can be obtained as the coequalizer
	\begin{equation}\begin{tikzcd}
			K+(C+B)\flat A \ar[r,shift left,"{[\iota_A\phi,\xi^{C+B}_A]}"] \ar[r,shift right,"{[\iota_{C+B}k_{B\rtimes C},\kappa_{C+B,A}]}"'] &[6em] C+B+A \ar[r,"{[c,b,a]}"] & (A\rtimes B)\rtimes C.
		\end{tikzcd}\label{eq:coequalizer_3sdp}
	\end{equation}
\end{proposition}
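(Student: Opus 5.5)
Both directions go through the characterization in \autoref{prop:3sdp_characterizations} and the pushforward construction of \autoref{prop:pushforward}, applied to the short exact sequence
\[\begin{tikzcd} K[\xi^C_B] \ar[r,"k_{B\rtimes C}"] & C+B \ar[r,"q_{B\rtimes C}"] & B\rtimes C. \end{tikzcd}\]
Here $K[\xi^C_B]=K[q_{B\rtimes C}]$, since the square $(\ast)$ of \eqref{sdprod_PO} is a pullback.

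\emph{From a pair to a ternary action.} Start with a pair $(\xi^B_A,\xi^C_{A\rtimes B})$ as in the statement. Condition (2) of \autoref{prop:3sdp_characterizations} shows that $X=(A\rtimes B)\rtimes C$ is a ternary semi-direct product of $A,B,C$. Condition (4) then gives a factorization $q_{B\rtimes C}=q\circ[c,b]$ with $K[q]=A$. Restricting $[c,b]$ to kernels gives a morphism of short exact sequences with left component $\phi\colon K[\xi^C_B]\to A$, as displayed just before \autoref{prop:pushforward}. Since $A$ is normal in $X$, take $\xi^{C+B}_A=[c,b]^\ast\chi^X_A$. Equivariance of $\phi$ is immediate: it is a restriction of $[c,b]$ between normal subobjects, so the remark on conjugation actions in the examples of Section~1 applies.

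For the compatibility \eqref{eq:compat_act_3sdp}, both sides are pullbacks of $\chi^X_A$ along morphisms $K\rtimes(C+B)\to X$. On the left the morphism is $[c,b]\circ[k,1\rangle$. On the right it is $[a,[c,b]\rangle\circ(\phi\rtimes(C+B))$. These two morphisms agree, because they agree on $K$ and on $C+B$. Finally, the induced actions are the expected ones: $\iota_C^\ast\xi^{C+B}_A=c^\ast\chi^X_A$, which is the $C$-action induced from $\xi^C_{A\rtimes B}$, and $\iota_B^\ast\xi^{C+B}_A=b^\ast\chi^X_A=\xi^B_A$ by \autoref{prop:sdp_int}.

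\emph{From a ternary action to a pair.} Given a ternary action $(\xi^C_B,\xi^{C+B}_A,\phi)$, the hypotheses of \autoref{prop:pushforward} are exactly its defining conditions. That proposition produces $Y$ and a morphism of short exact sequences with components $\phi$, $f\colon C+B\to Y$ and $1_{B\rtimes C}$. Set $c=f\iota_C$ and $b=f\iota_B$. Then $q'[c,b]=q_{B\rtimes C}$ and $K[q']\cong A$, so condition (4) of \autoref{prop:3sdp_characterizations} holds, and condition (2) yields the pair $(\xi^B_A,\xi^C_{A\rtimes B})$.

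\emph{The two constructions are mutually inverse.} Going from a pair to a ternary action and back recovers the same ternary semi-direct product $X$, by the uniqueness in \autoref{prop:pushforward}, and hence the same actions. Going from a ternary action to a pair and back, we need $[c,b]^\ast\chi^Y_A=\xi^{C+B}_A$. This should follow from the construction of $Y$ as a quotient of $A\rtimes(C+B)$ along $[l,f\rangle$. Conjugation by $C+B$ on $A$ inside $A\rtimes(C+B)$ is $\xi^{C+B}_A$, and this is carried along $[l,f\rangle$, which is the identity on $A$. The morphism $\phi$ is recovered as the restriction of $f$ to kernels.

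\emph{The coequalizer.} The presentation \eqref{eq:coequalizer_3sdp} comes from composing two quotients: $q_{\xi^{C+B}_A}\colon C+B+A\to A\rtimes(C+B)$, given by the coequalizer \eqref{sdprod_coeq}, followed by the coequalizer $[l,f\rangle$ from the proof of \autoref{prop:pushforward}. The second one is effectively the quotient of $A\rtimes(C+B)$ by the normal subobject generated by the elements $j_A\phi(x)\cdot k(x)^{-1}$, $x\in K$. A morphism $C+B+A\to Z$ therefore factors through $Y$ if and only if it coequalizes the pair on $(C+B)\flat A$ and identifies $\iota_A\phi$ with $\iota_{C+B}k$ on $K$; these are exactly the two components of the parallel pair in \eqref{eq:coequalizer_3sdp}.

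I expect this coequalizer identification to be the main obstacle. One has to check that a morphism coequalizing $\iota_A\phi$ and $\iota_{C+B}k$ on $K$ also coequalizes the kernel pair $L\rtimes[k,1\rangle$, $[j_L,\phi\rtimes X\rangle$ on all of $A\rtimes(K\rtimes(C+B))$. I would argue this using the fact that $j_A$ and $s_{K\rtimes(C+B)}\circ j_K$, together with $s\circ s$, jointly strongly generate $A\rtimes(K\rtimes(C+B))$, and that the two maps already agree on $A$ and on $C+B$.
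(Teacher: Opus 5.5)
Your proposal is correct and follows essentially the same route as the paper: the bijection comes from \autoref{prop:3sdp_characterizations} and the pushforward of \autoref{prop:pushforward}, and the presentation \eqref{eq:coequalizer_3sdp} comes from composing the coequalizer \eqref{sdprod_coeq} for $A\rtimes(C+B)$ with the pushforward coequalizer, where the key check, done on the jointly epimorphic family $j_A$, $s\circ j_K$, $s\circ s_{C+B}$, is that $g$ coequalizes the latter pair exactly when $[g_C,g_B]k_{B\rtimes C}=g_A\phi$. The differences are minor: you match the induced actions by pulling back conjugation actions (e.g.\ $f^\ast\chi^Y_A=s_{C+B}^\ast[l,f\rangle^\ast\chi^Y_A=\xi^{C+B}_A$) and you verify \eqref{eq:compat_act_3sdp} explicitly, whereas the paper pulls back the square \eqref{eq:3sdp_const_pb} along $\iota_B$ and $\iota_C$ in $\Pt_C(\C)$ and leaves the rest to the discussion preceding \autoref{prop:pushforward}.
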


\begin{proof}
	By \autoref{prop:pushforward}, since $X=(A\rtimes B)\rtimes C$ is a pushforward of the short exact sequence
	\[\begin{tikzcd}
		{K[\xi^C_B]} \ar[r,"k_{B\rtimes C}"] & C+B \ar[r,"q_{B\rtimes C}"] & B\rtimes C
	\end{tikzcd}\]
	along $\phi$, we have a pullback square (in $\C$)
	\begin{equation}\begin{tikzcd}
			A\rtimes (C+B) \pb \ar[r,"{[a,[c,b]\rangle}"] \ar[d,"p_{C+B}"'] & X \ar[d,"q"]\\
			C+B \ar[r,"q_{B\rtimes C}"'] & B\rtimes C.
		\end{tikzcd}\label{eq:3sdp_const_pb}\end{equation}
	Since $q_{B\rtimes C}\colon (C+B,[1,0],\iota_C)\to (B\rtimes C,p_C,s_C)$ is a morphism in $\Pt_C(\C)$, and $p_{C+B}$ can be seen as a split epimorphism in $\Pt_C(\C)$ thanks to \autoref{prop:equivalence_pt_3}, the whole square can also be seen as a pullback in $\Pt_C(\C)$. By pulling back $p_{C+B}$ along $\iota_B$ and $\iota_C$, we obtain two pullback squares from \eqref{eq:3sdp_const_pb}
	\[\begin{tikzcd}
		A\rtimes B \ar[dr,phantom,"{\scalebox{1.5}{$\lrcorner$}}",at start] \ar[r,"{[a,b\rangle}"] \ar[d,"p_{B}"'] & X \ar[d,"q"] & A\rtimes C \ar[d,"p_C"] \ar[l,"{[a,c\rangle}"'] \ar[dl,phantom,"{\scalebox{1.5}{$\llcorner$}}",at start] \\
		B \ar[r,"j_B"'] & B\rtimes C & C, \ar[l,"s_C"]
	\end{tikzcd}\]
	
	where $A\rtimes B$ and $A\rtimes C$ correspond to the actions $\iota_B^\ast \xi^{C+B}_A$ and $\iota_B^\ast\xi^{C+B}_A$ respectively. Since $j_B$ is the kernel of $p_C$, the left square being a pullback implies that $[a,b\rangle$ is the kernel of $p_Cq$, so that $p_B\colon A\rtimes B\to B$ coincide with $K_C[q]$; and the right square can be seen naturally as a pullback in $\Pt_C(\C)$ (with $C$ being the zero object), so that $K_C([a,c\rangle)=j_A\colon A\to A\rtimes B$. Thus the actions of $C$ and $B$ on $A$ are the ones making $j_A$ and $p_B$ equivariant.
	
	Furthermore, $X$ can be constructed as the coequalizer
	\begin{equation}\begin{tikzcd}
			{A\rtimes (K[\xi^C_B]\rtimes (C+B))} \ar[r,"{A\rtimes [k_{B\rtimes C},1 \rangle}",shift left] \ar[r,shift right,"{[j_A,\phi\rtimes (C+B)\rangle}"'] &[4em] A\rtimes (C+B) \ar[r,"{[a,[c,b]\rangle}"] &[1em] X.
		\end{tikzcd}\label{eq:coequalizer_3sdp_alt}\end{equation}
	
	A morphism $g\colon A\rtimes (C+B)\to Z$ is uniquely determined by three morphism $g_A,g_B,g_C$ from $A,B,C$ respectively to $Z$, such that $[g_C,g_B,g_A]\circ \kappa_{C+B,A} = g_A\circ \xi^{C+B}_A$; we then have $g=[g_A,[g_C,g_B]\rangle$, and one can then check that $g$ coequalizes $A\rtimes [k_{B\rtimes C},1 \rangle$ and $[j_A,\phi\rtimes (C+B)\rangle$ if and only if $[g_C,g_B]k_{B\rtimes C} = g_A\phi$. Thus there is a bijection between morphisms $C+B+A\to Z$ that coequalize the parallel pair in \eqref{eq:coequalizer_3sdp} and morphisms $A\rtimes (C+B)\to Z$ that coequalize the parallel pair in \eqref{eq:coequalizer_3sdp_alt}, so that the two colimits coincide.
\end{proof}

\begin{proposition}
	The ternary semi-direct product is a functor $\Act^3(\C)\to \C$, and it induces a bijection between morphisms of ternary actions and morphisms $f\colon (A\rtimes B)\rtimes C\to (A'\rtimes B')\rtimes C'$ such that $f(A)\leq A'$, $f(B)\leq B'$ and $f(C)\leq C'$.
\end{proposition}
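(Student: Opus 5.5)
Given a morphism of ternary actions $(\alpha,\beta,\gamma)$, I will construct the induced morphism from the coequalizer presentation \eqref{eq:coequalizer_3sdp} in \autoref{prop:3sdp_colimit}. Write $X=(A\rtimes B)\rtimes C$, $X'=(A'\rtimes B')\rtimes C'$, with inclusions $a,b,c$ and $a',b',c'$. Consider the morphism $\gamma+\beta+\alpha\colon C+B+A\to C'+B'+A'$ and compose it with the quotient $[c',b',a']$. I will check that the composite coequalizes the parallel pair in \eqref{eq:coequalizer_3sdp}, using a map $k^\gamma_\beta+(\gamma+\beta)\flat\alpha$ between the domains $K+(C+B)\flat A$ and $K'+(C'+B')\flat A'$:
\begin{itemize}
\item naturality of $k_{B\rtimes C}$ gives $(\gamma+\beta)k_{B\rtimes C}=k_{B'\rtimes C'}k^\gamma_\beta$;
\item naturality of $\kappa$ handles the second component of the lower arrow;
\item the hypothesis $\phi' k^\gamma_\beta=\alpha\phi$ and the equivariance $\alpha\xi^{C+B}_A=\xi^{C'+B'}_{A'}\circ(\gamma+\beta)\flat\alpha$ handle the upper arrow.
\end{itemize}
So the two parallel pairs are intertwined by these maps, and we get an induced $f\colon X\to X'$ with $fa=a'\alpha$, $fb=b'\beta$, $fc=c'\gamma$. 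Hence $f(A)\le A'$, $f(B)\le B'$ and $f(C)\le C'$. Functoriality follows from uniqueness of morphisms out of a coequalizer, since identities and composites are induced by $\gamma+\beta+\alpha$, which is functorial. Injectivity of the assignment is clear, because $a',b',c'$ are monomorphisms: $f$ determines $\alpha,\beta,\gamma$ as the restrictions.

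For surjectivity, start from $f\colon X\to X'$ restricting to $\alpha,\beta,\gamma$ on the three subobjects. I must show $(\alpha,\beta,\gamma)$ is a morphism in $\Act^3(\C)$.

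\begin{itemize}
\item \emph{The action $\xi^C_B$.} Use $q\colon X\to B\rtimes C$, the cokernel of $a$ (characterization (4) of \autoref{prop:3sdp_characterizations}). Since $f(A)\le A'$, $f$ induces $g\colon B\rtimes C\to B'\rtimes C'$ with $gq=q'f$. Then $gj_B=j_{B'}\beta$ and $gs_C=s_{C'}\gamma$ follow from $qb=j_B$, $qc=s_C$. By \autoref{prop:sdp_UP}, or rather by the fact that a morphism of points yields a morphism of actions, $(\beta,\gamma)$ is a morphism of actions $\xi^C_B\to\xi^{C'}_{B'}$ and $g=\beta\rtimes\gamma$.
\item \emph{The action $\xi^{C+B}_A$.} This action is $[c,b]^\ast\chi^X_A$, the restriction of conjugation in $X$ to the normal subobject $A$. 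The map $f$ restricts to $\alpha$ on $A$ and satisfies $f[c,b]=[c',b'](\gamma+\beta)$. The earlier remark on restrictions of morphisms to normal subobjects shows that $(\alpha,f)$ is a morphism of conjugation actions. Pulling back along $[c,b]$ and $[c',b']$ gives that $(\alpha,\gamma+\beta)$ is a morphism of actions.
\item \emph{Compatibility with $\phi$.} Use the morphism of short exact sequences before \autoref{prop:pushforward}: $a\phi=[c,b]k_{B\rtimes C}$. Then
\[a'\phi'k^\gamma_\beta=[c',b']k_{B'\rtimes C'}k^\gamma_\beta=[c',b'](\gamma+\beta)k_{B\rtimes C}=f[c,b]k_{B\rtimes C}=fa\phi=a'\alpha\phi,\]
and since $a'$ is monic, $\phi'k^\gamma_\beta=\alpha\phi$.
\end{itemize}

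Finally, the $f$ induced by this triple agrees with the original $f$ on $a,b,c$. Since $[c,b,a]$ is epic, the two morphisms are equal, which gives the bijection.

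The main obstacle is the bookkeeping in the first step: checking that both arrows of the parallel pair are intertwined, notably the identity $(\gamma+\beta)\circ k_{B\rtimes C}=k_{B'\rtimes C'}\circ k^\gamma_\beta$. This requires identifying $k_{B\rtimes C}=\kappa_{C,B}\circ k_\xi$ and checking that $k^\gamma_\beta$ is compatible with $\gamma\flat\beta$. Both come directly from naturality of $\kappa$ and the definition of $k^\gamma_\beta$.
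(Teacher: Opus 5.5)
Your proof is correct, but it takes a different route from the paper's.

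\textbf{The paper's argument.} It begins as you do: a subobject-preserving $f$ is determined by $\alpha,\beta,\gamma$, because $[c,b,a]$ is a regular epimorphism. It then views such an $f$ as an object of the arrow category of $\C$, where it is itself a ternary semi-direct product of the arrows $\alpha,\beta,\gamma$. Limits and colimits in the arrow category are computed componentwise, hence so are $\flat$, kernels, joins, normality and intersections. So a ternary action on $(\alpha,\beta,\gamma)$ there is exactly a pair of ternary actions on $(A,B,C)$ and $(A',B',C')$ for which $(\alpha,\beta,\gamma)$ is a morphism of $\Act^3(\C)$, and $f=(\alpha\rtimes\beta)\rtimes\gamma$.

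\textbf{Your argument.} You verify both directions by hand. You construct $f$ from the coequalizer presentation \eqref{eq:coequalizer_3sdp} by intertwining the two parallel pairs. Conversely, you extract the three conditions from $f$ separately:
\begin{itemize}
\item $(\beta,\gamma)$ comes from the induced map on the cokernels $B\rtimes C\to B'\rtimes C'$;
\item $(\alpha,\gamma+\beta)$ comes from $\xi^{C+B}_A=[c,b]^\ast\chi^X_A$ together with the behaviour of conjugation under restriction to normal subobjects;
\item $\phi'k^\gamma_\beta=\alpha\phi$ comes from $a\phi=[c,b]k_{B\rtimes C}$ and the monicity of $a'$.
\end{itemize}

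\textbf{What each buys.} The paper's proof is shorter and reuses the arrow-category trick from the preceding naturality proposition. It leaves implicit, however, that the defining conditions of a ternary semi-direct product and the notion of ternary action really do transfer componentwise. Yours is longer but self-contained, and it shows exactly which hypothesis yields which component of a morphism in $\Act^3(\C)$.

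One small point: if you obtain $(\beta,\gamma)$ via a morphism of points, you also need $p_{C'}g=\gamma p_C$. This follows because $j_B$ and $s_C$ are jointly epimorphic, or you can bypass it by applying \autoref{prop:sdp_UP} directly.
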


\begin{proof}
	A morphism $f$ between two ternary semi-direct products preserves the subobjects if and only if we have a commutative diagram
	\[\begin{tikzcd}
		C+B+A \ar[r,"{[c,b,a]}"] \ar[d,"{[\gamma,\beta,\alpha]}"'] & (A\rtimes B)\rtimes C \ar[d,"f"] \\
		C'+B'+A' \ar[r,"{[c',b',a']}"']  & (A'\rtimes B')\rtimes C';
	\end{tikzcd}\]
	then $f$ is uniquely determined by $\alpha$, $\beta$ and $\gamma$. In fact, $f$ is a ternary semi-direct product of $\alpha$, $\beta$ and $\gamma$ in the category of arrows of $\C$, and since all limits and colimits can be performed componentwise, the ternary action on $(\alpha,\beta,\gamma)$ is equivalent to the ternary actions on $(A,B,C)$ and $(A',B',C')$ making $(\alpha,\beta,\gamma)$ a morphism in $\Act^3(\C)$, and we have $f=(\alpha\rtimes \beta)\rtimes \gamma$.
\end{proof}

Since $[s_{C+B},j_A]\colon C+B+A\to A\rtimes(C+B)$ can also be seen as a morphism in $\Pt_{C+B}(\C)$ (and thus in $\Pt_C(\C)$), by composition with the square \eqref{eq:3sdp_const_pb} we obtain the commutative diagram
\[\begin{tikzcd}[column sep=large]
	(C+B)\flat A \ar[d,"\xi^{C+B}_A"']\ar[r,"\kappa_{C+B,A}"] & C+B+A \ar[d,"{[c,b,a]}"']\ar[r,shift left,"{[\iota_C,\iota_B,0]}"] & C+B \ar[l,shift left,"{[\iota_C,\iota_B]}"] \ar[d,"q_{B\rtimes C}"] \\
	A \ar[r,"{a}"'] & (A\rtimes B)\rtimes C \ar[r,"q"'] & B\rtimes C; 
\end{tikzcd}\]
since the right square in this diagram is a morphism in $\Pt_C(\C)$, we also have a commutative diagram
\[\begin{tikzcd}[column sep=large]
	C\flat (B+A) \ar[d,"\theta"',""{name=0}]\ar[r,"\kappa_{C,B+A}"] & C+B+A \ar[d,"{[c,b,a]}"{name=1,swap}] \ar[r,shift left,"{[1,0,0]}"] \ar[from=0,to=1,phantom,"(\ast)"] & C \ar[l,shift left,"\iota_C"] \ar[d,equal] \\
	A\rtimes B \ar[r,"{[a,b\rangle}"'] & (A\rtimes B)\rtimes C \ar[r,shift left,"p_C"] & C. \ar[l,"c",shift left]
\end{tikzcd}\]

We note that this second diagram implies that $\theta$ is $C$-equivariant with respect to $\mu^{C}_{B+A}$ and $\xi^{C}_{A\rtimes B}$, and that the square $(\ast)$ is a pullback. Since $[c,b,a]$ is a regular epimorphism, $\theta$ is one as well, and $(\ast)$ is also a pushout. Thus one can easily obtain the action of $C$ on $A\rtimes B$, and the quotient $[c,b,a]\colon C+B+A\to(A\rtimes B)\rtimes C$, from $\theta$. Furthermore, $\theta$ can be constructed directly from the triple $(\xi^{C}_B,\xi^{C+B}_A,\phi)$, as we shall now explain.

We first observe that, since $q_{B\rtimes C}\iota_B = j_B$, the split short exact sequence corresponding to the action $\iota_B^\ast\chi^{C+B}_K$ can be identified with the top row of the diagram
\[\begin{tikzcd}
	K[\xi^{C}_{B}] \ar[r,"k_\xi"] \ar[d,equal] & C\flat B \pb \ar[d,"\kappa_{C,B}"']\ar[r,shift left,"\xi^C_B"] & B \ar[l,shift left,"\eta^C_B"] \ar[d,"j_B"] \\
	K[\xi^{C}_{B}] \ar[r,"k_{B\rtimes C}"'] & C+B \ar[r,"q_{B\rtimes C}"']& B \rtimes C.
\end{tikzcd}\]
In particular, the $B$-equivariance of $\phi$ implies that there exists a unique morphism $\psi$ making the following diagram commute :
\begin{equation}\label{eq:def_psi}\begin{tikzcd}
		K[\xi^{C}_{B}] \ar[r,"{k_\xi}"] \ar[d,"\phi"'] & C\flat B \ar[d,"\psi"]\ar[r,shift left,"\xi^C_B"] & B \ar[l,shift left,"\eta^C_B"] \ar[d,equal] \\
		A \ar[r,"j_A"'] & A\rtimes B \ar[r,shift left, "p_B"]& B. \ar[l,shift left,"s_B"]
\end{tikzcd}\end{equation}

The equality \eqref{eq:compat_act_3sdp} then implies that $\psi^\ast \chi^{A\rtimes B}_A = \kappa_{C,B}^\ast \xi^{C+B}_A$. Considering the split short exact sequence \eqref{eq:canonical_sses_cba}, we can then define $\theta$ as the unique morphism such that $\theta \circ C\flat \iota_B = \psi$ and $\theta\circ \overline{\kappa}_{C,B,A} = j_A\circ \xi^{C+B}_A$. Indeed, the existence of $\theta$ is equivalent to the commutativity of the diagram
\[\begin{tikzcd}
	(C\flat B) \flat ((C+B)\flat A) \ar[r,"\psi\flat \xi^{C+B}_A"] \ar[d,"\kappa_{C,B}^*\mu^{C+B}_A"'] &[1em] (A\rtimes B) \flat A \ar[d,"\chi^{A\rtimes B}_A"] \\
	(C+B)\flat A \ar[r,"\xi^{C+B}_A"'] & A,
\end{tikzcd}\]
which may be decomposed as
\[\begin{tikzcd}
	{(C\flat B) \flat ((C+B)\flat A)} \arrow[rr,"{\psi\flat \xi^{C+B}_A}", bend left=20] \arrow[r,"{(C\flat B)\flat \xi^{C+B}_A}"] \arrow[d,"{\kappa_{C,B}\flat ((C+B)\flat A)}"'] &[3em] {(C\flat B)\flat A} \arrow[r,"{\psi\flat A}"] \arrow[d,"{\kappa_{C,B}\flat A}"] & {(A\rtimes B)\flat A} \arrow[dd,"\chi^{A\rtimes B}_A"] \\
	{(C+B) \flat ((C+B)\flat A)} \arrow[r,"{(C+B)\flat \xi^{C+B}_A}"'] \arrow[d,"{\mu^{C+B}_A}"'] & {(C+B)\flat A} \arrow[dr,"{\xi^{C+B}_A}"] & \\
	{(C+B)\flat A} \arrow[rr,"{\xi^{C+B}_A}"'] & & A
\end{tikzcd}\]
which is commutative by assumption.

We note that we have identities
\begin{gather*}
	[a,b\rangle \circ \psi \circ k_\xi = [a,b\rangle \circ j_A\circ \phi = a\circ \phi = [c,b]\circ \ker(q_{B\rtimes C}) = [c,b]\circ\kappa_{C,B}\circ  k_\xi\\
	[a,b\rangle \circ \psi \circ \eta^{C}_{B} = [a,b\rangle \circ s_B = b = [c,b] \circ \iota_B = [c,b]\circ \kappa_{C,B} \circ \eta^{C}_{B},
\end{gather*}
and $k_\xi,\eta^{C}_{B}$ are jointly (strongly) epimorphic, so that $[a,b\rangle \circ \psi = [c,b]\circ \kappa_{C,B}$. As a consequence, we find that $\theta$ makes the expected diagram commute, since we have
\begin{align*}
	[c,b,a]\circ \kappa_{C,B+A} \circ \overline{\kappa}_{C,B,A} & = [c,b,a]\circ \kappa_{C+B,A} = a\circ \xi^{C+B}_{A}\\
	& = [a,b\rangle \circ j_A \circ \xi^{C+B}_{A}  = [a,b\rangle \circ \theta\circ \overline{\kappa}_{C,B,A} 
\end{align*}
and
\begin{align*}
	[c,b,a]\circ \kappa_{C,B+A}\circ C\flat \iota_B & = [c,b,a]\circ (C+\iota_B) \circ \kappa_{C,B} = [c,b] \circ \kappa_{C,B}\\
	& = [a,b\rangle \circ \psi = [a,b\rangle \circ \theta\circ C\flat \iota_B.
\end{align*}

We can also show that $\theta$ defines a morphism of short exact sequences
\[\begin{tikzcd}[column sep=large]
	(C+B)\flat A \ar[d,"\xi^{C+B}_{A}"'] \ar[r,"\overline{\kappa}_{C,B,A}"] & C\flat (B+A) \ar[d,"\theta"] \ar[r,"{C\flat [1,0]}"] & C\flat B \ar[d,"\xi^C_B"] \\
	A \ar[r,"j_A"']& A\rtimes B \ar[r,"p_B"'] & B
\end{tikzcd}\]
(note that, while both short exact sequences split, $\theta$ is not compatible with the sections).

Furthermore, we have a commutative diagram
\[\begin{tikzcd}
	B\flat A \arrow[r,"{\kappa_{B, A}}"] \ar[d,"\iota_B\flat A"] \arrow[dd,bend right=70,"\xi^B_A"'] & B+A \arrow[r,shift left,"{[1,0]}"] \arrow[d,"{\eta_{B+A}^{C}}"'] & B \arrow[l,shift left, "\iota_B"] \arrow[d,"\eta_B^{C}"] \\
	{(C+B)\flat A} \arrow[d,"{\xi^{C+B}_{A}}"] \arrow[r,"{\overline{\kappa}_{C,B,A}}"] & {C\flat (B+A)} \arrow[r,"{{C\flat [1,0]}}"] \arrow[d,"\theta"'] & {C\flat B} \arrow[d,"{\xi^C_B}"] \\
	A \arrow[r,"{j_A}"']& {A\rtimes B}\arrow[r,"{p_B}"'] & B
\end{tikzcd}\]
which shows that $\theta \eta^{C}_{B+A} \kappa_{B,A} = j_A\xi^B_A = q_{A\rtimes B}\kappa_{B, A}$. We also have
\[\theta \eta^{C}_{B+A} \iota_B = \theta C\flat \iota_B \eta^{C}_B = \psi \eta^{C}_B=s_B = q_{A\rtimes B}\iota_B,\]
so that $\theta \eta^{C}_{B+A}=q_{A\rtimes B}$. This means that $\theta$ is the $C$-equivariant morphism $(C\flat (B+A),\mu^{C}_{B+A})\to (A\rtimes B,\xi^{C}_{A\rtimes B})$ corresponding to $q_{A\rtimes B}\colon B+A\to A\rtimes B$ under the free-forgetful adjunction between $\C$ and $\Act_C(\C)$, which means that $\xi^{C}_{A\rtimes B} \circ C\flat q_{A\rtimes B} = \theta$. Since $C\flat q_{A\rtimes B}$ is a regular epimorphism, $\xi^{C}_{A\rtimes B}$ is uniquely determined by $\theta$.

\begin{remark}
	The ternary actions where $\phi=0$ correspond precisely to the ternary semi-direct products $A\rtimes (B\rtimes C)$ of \autoref{ex:3sdp_assoc}. Indeed, $0\rtimes X=s_Xp_X$, thus when $\phi=0$ the condition \eqref{eq:compat_act_3sdp} becomes $[k_{B\rtimes C},1\rangle^{\ast}\xi^{C+B}_A = p_{C+B}^\ast\xi^{C+B}_A$. Since $q_{B\rtimes C}$ is the coequalizer of its kernel pair $([k_{B\rtimes C},1\rangle, p_{C+B})$, this equality holds if and only if $\xi^{C+B}_A$ can be expressed as $q_{B\rtimes C}^{\ast}\xi^{B\rtimes C}_A$.
	
	Furthermore, for any ternary semi-direct product we have
	\[\xi^{C}_{A\rtimes B} \circ C\flat s_B = \xi^{C}_{A\rtimes B} \circ C\flat q_{A\rtimes B} \circ C\flat \iota_B = \theta \circ C\flat \iota_B = \psi;\]
	and thanks to \eqref{eq:def_psi}, when $\phi=0$ we find that $\psi = s_B\flat \xi^{C}_B$, so that $s_B$ is $C$-equivariant.
\end{remark}

\begin{example}\label{ex:representable}
	Given an action $(B,C,\xi^{C}_B)$ and two morphisms $\beta\colon B\to A$ and $\gamma\colon C\to A$, defining $\xi^{C+B}_A=[\gamma,\beta]^*\chi_A$ and $\phi=[\gamma,\beta]k_{B\rtimes C}$ yields a ternary action. The corresponding ternary semi-direct product yields the diagram
	\[\begin{tikzcd}
		A \ar[r,"{(1,0)}"] \ar[d,equal] & A\times B \ar[r,shift left,"\pi_B"] \ar[d,"A\times j_B"] & B \ar[l,"{(\beta,1)}",shift left] \ar[d,"j_B"] \\
		A \ar[r,"{(1,0)}"'] & A\times (B\rtimes C) \ar[r,"\pi_{B\rtimes C}"] \ar[d,"p_C\pi_{B\rtimes C}",shift left] & B\rtimes C \ar[d,"p_C",shift left] \\
		& C \ar[r,equal] \ar[u,"{(\gamma,s_C)}",shift left] & C. \ar[u,"s_C",shift left]
	\end{tikzcd}\]
	We note that a section of $\pi_{B\rtimes C}$ compatible with the vertical split short exact sequences and the section $(\beta,1)$ exists if and only if $\beta$ and $\gamma$ induce a morphism $[\beta,\gamma\rangle \colon B\rtimes C\to A$, and thus if and only if $(\beta,\gamma)$ is a morphism of actions $(B,C,\xi^{C}_B)\to (A,A,\chi_A)$. The corresponding action of $B\rtimes C$ on $A$ is then $[\beta,\gamma\rangle^\ast \chi_A$.
	
	Note that the image of $\phi=[\gamma,\beta]k_{B\rtimes C}$ coincides with the \emph{twisted commutator} of $\beta$ and $\gamma$ in the sense of \cite{DVdL24}; this shows directly that $\phi=0$ if and only if $[\beta,\gamma\rangle$ exists.\end{example}

\section{Uniqueness of actions and algebraic coherence}

\subsection{Uniqueness of actions}

Our definition of ternary actions and ternary semi-direct product involves actions by $C+B$ or $B\rtimes C$ on $A$, which induce actions by $C$ and $B$ on $A$. In general, given actions $(A,B,\xi^B_A)$ and $(A,C,\xi^{C}_{A})$, neither existence nor uniqueness of an action of $C+B$ on $A$ inducing $\xi^B_A$ and $\xi^{C}_A$ are guaranteed. This issue led to the introduction of the condition $\mathrm{(UA)}$ in \cite{CMM17}, which we now recall, in a slightly different formulation.

\begin{definition}
	A cospan $\begin{tikzcd}[column sep=scriptsize] B \ar[r,"b"] & X & \ar[l,"c"']C\end{tikzcd}$ satisfies the \emph{uniqueness of actions} condition $\mathrm{(UA)}$ if, for all actions $\xi_1,\xi_2\colon X\flat A\to A$, if $b^* \xi_1= b^* \xi_2$ and $c^* \xi_1 = c^* \xi_2$ then $\xi_1=\xi_2$.
\end{definition}

The difference of terminology is that in \cite{CMM17}, \emph{the category} $\C$ is said to satisfy $\mathrm{(UA)}$ if every jointly strongly epimorphic cospan satisfies the previous definition. When this condition holds, the definition of ternary action can be simplified :

\begin{proposition}
	Let $\C$ be a category such that every semi-direct product cospan satisfies $(UA)$. A triple $(\xi^{C}_{B},\xi^{C+B}_A,\phi)$ is a ternary action on $A,B,C$ if and only if $\xi^{C}_{B}$ and $\xi^{C+B}_A$ are actions, and $\phi$ is $(C+B)$-equivariant and $k_{B\rtimes C}^\ast \xi^{C+B}_A =\phi^{\ast}\chi_A$. Moreover, the action $\xi^{C+B}_A$ is uniquely determined by the induced actions $\xi^{C}_A$ and $\xi^{B}_A$ and $\phi$.
\end{proposition}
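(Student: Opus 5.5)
The plan is to compare the two actions of $K\rtimes(C+B)$ on $A$ in \eqref{eq:compat_act_3sdp} (writing $K=K[\xi^C_B]$) by restricting them along the two legs of the semi-direct product cospan
\[\begin{tikzcd}[cramped] K \ar[r,"j_K"] & K\rtimes (C+B) & C+B. \ar[l,"s_{C+B}"'] \end{tikzcd}\]
Since this cospan satisfies $\mathrm{(UA)}$ by hypothesis, the two actions coincide if and only if their pullbacks along $j_K$ and along $s_{C+B}$ coincide. So the first step is to compute these four pullback actions.

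Along $s_{C+B}$, the first action gives
\[s_{C+B}^\ast[k_{B\rtimes C},1\rangle^\ast\xi^{C+B}_A = ([k_{B\rtimes C},1\rangle s_{C+B})^\ast\xi^{C+B}_A = \xi^{C+B}_A.\]
For the second, $(\phi\rtimes(C+B))\circ s_{C+B} = s_{C+B}$, and $s_{C+B}^\ast\chi^{A\rtimes(C+B)}_A = \xi^{C+B}_A$, because $A\rtimes(C+B)$ restricted along the section is the split extension corresponding to $\xi^{C+B}_A$ and conjugation by $s$ recovers the action. So the $s_{C+B}$-restrictions always agree and impose no condition.

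Along $j_K$, the first action gives $([k_{B\rtimes C},1\rangle j_K)^\ast\xi^{C+B}_A = k_{B\rtimes C}^\ast\xi^{C+B}_A$. For the second, $(\phi\rtimes(C+B))\circ j_K = j_A\circ\phi$, so it becomes $\phi^\ast j_A^\ast\chi^{A\rtimes(C+B)}_A = \phi^\ast\chi_A$. Here I use that $j_A^\ast\chi^{A\rtimes(C+B)}_A = \chi_A$, by naturality of conjugation with respect to the normal monomorphism $j_A$ (Example 1.2, the restriction of conjugation to a normal subobject, with $f = j_A$). Hence, under $\mathrm{(UA)}$, condition \eqref{eq:compat_act_3sdp} is equivalent to $k_{B\rtimes C}^\ast\xi^{C+B}_A = \phi^\ast\chi_A$. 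This proves the first claim.

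For the uniqueness claim, I will show that $C+B\simeq (C\flat B)\rtimes C$ and $C\flat B\simeq K\rtimes B$. The first isomorphism is Example 2.6. The second comes from the split short exact sequence $K\to C\flat B\rightleftarrows B$ given by $\xi^C_B$ and $\eta^C_B$. Given two ternary actions with the same $\xi^C_B$, $\xi^C_A$, $\xi^B_A$ and $\phi$:
\begin{itemize}
\item Their restrictions along $\kappa_{C,B}k_\xi = k_{B\rtimes C}$ both equal $\phi^\ast\chi_A$.
\item Their restrictions along $\kappa_{C,B}\eta^C_B = \iota_B$ both equal $\xi^B_A$.
\end{itemize}
By $\mathrm{(UA)}$ for the cospan $(k_\xi,\eta^C_B)$, they agree on $C\flat B$. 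Then, using $\mathrm{(UA)}$ for the cospan $(\kappa_{C,B},\iota_C)$ together with agreement along $\iota_C$ (both give $\xi^C_A$), they agree on $C+B$.

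The main subtle point is the identification $s_{C+B}^\ast\chi^{A\rtimes(C+B)}_A=\xi^{C+B}_A$. I would prove it via \autoref{prop:sdp_UP}, since $(j_A,s_{C+B})$ is a morphism of actions into the conjugation action; this holds because $[j_A,s_{C+B}\rangle$ is the identity.
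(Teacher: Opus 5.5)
Your proof is correct and follows the paper's argument. You compare the two actions of $K\rtimes(C+B)$ on $A$ through their pullbacks along $j_K$ and $s_{C+B}$ and invoke $\mathrm{(UA)}$, then get uniqueness from the splittings $C+B\simeq (C\flat B)\rtimes C$ and $C\flat B\simeq K\rtimes B$, exactly as the paper does via \autoref{ex:coproduct}. Your explicit justifications of $s_{C+B}^\ast\chi^{A\rtimes(C+B)}_A=\xi^{C+B}_A$ and $j_A^\ast\chi^{A\rtimes(C+B)}_A=\chi_A$ simply fill in steps the paper states without comment.
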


\begin{proof}
	Considering the two actions $[k_{B\rtimes C},1\rangle^{\ast}\xi^{C+B}_A$ and $(\phi\rtimes (C+B))^\ast\chi^{A\rtimes (C+B)}_A$ of $K\rtimes(C+B)$ on $A$, their pullback along $j_{K}$ and $s_{C+B}$ are
	\[s_{C+B}^{\ast} [k_{B\rtimes C},1\rangle^{\ast}\xi^{C+B}_A = \xi^{C+B}_A = s_{C+B}^\ast\chi^{A\rtimes (C+B)}_A = s_{C+B}^\ast (\phi\rtimes X)^\ast\chi^{A\rtimes (C+B)}_A\]
	\[j_{K}^{\ast} [k_{B\rtimes C},1\rangle^{\ast}\xi^{C+B}_A = k_{B\rtimes C}^{\ast}\xi^{C+B}_A \qquad j_K^\ast (\phi\rtimes X)^\ast\chi^{A\rtimes (C+B)}_A = \phi^\ast j_K^\ast \chi^{A\rtimes (C+B)}_A = \phi^\ast \chi_A.\]
	In particular, if the cospan induced by the semi-direct product $K[\xi^{C}_B] \rtimes (C+B)$ satisfies $(UA)$, then the identity $k_{B\rtimes C}^\ast \xi^{C+B}_A =\phi^{\ast}\chi_A$ implies \eqref{eq:compat_act_3sdp}.
	
	Moreover, since by \autoref{ex:coproduct} $C+B$ is a ternary semi-direct product of $K[\xi^{C}_{B}]$, $B$ and $C$, the action $\xi^{C+B}_A$ is uniquely determined by its pullbacks along $\iota_C$ and $\kappa_{C,B}$, and the latter is uniquely determined by its pullbacks along $\eta^{C}_{B}$ and $k_\xi$, so that $\xi^{C+B}_A$ is uniquely determined by its pullback $\xi^{C}_{A}$, $\xi^B_A$ and $\phi^\ast\chi_A$.
\end{proof}

\begin{definition}[\cite{BJK05,BJK05a}]
	A \emph{split extension classifier} of an object $A$ is an object $[A]$, endowed with an action $\overline{\xi}$ of $[A]$ on $A$, such that for each action $(A,B,\xi)$, there exists a unique morphism $f\colon B\to [A]$ such that $f^\ast\overline{\xi}=\xi$.
	
	A category $\C$ is called \emph{action representable} if each objects admits a split extension classifier.
\end{definition}

For example, the categories $\Grp$ and $\Lie$ are action representable, with the split extension classifiers corresponding to automorphism groups and derivation algebras. Any action representable category satisfies $(UA)$, and we have :

\begin{proposition}
	Let $(B,C,\xi^{C}_{B})$ be an action and $\phi\colon K[\xi^{C}_{B}]\to A$ be a morphism. If $A$ admits a split extension classifier $[A]$, then there is a bijection between ternary actions on $(A,B,C)$ and morphisms of crossed modules
	\[\begin{tikzcd}
		{K[\xi^{C}_{B}]} \ar[d,"k_{B\rtimes C}"'] \ar[r,"\phi"] &[1em] A\ar[d,"{c_A}"] \\
		C+B \ar[r,"{[f_C,f_B]}"'] & {[A],}
	\end{tikzcd}\]
	where $c_A$ is the unique morphism such that $c_A^\ast\overline{\xi}_A=\chi_A$.
\end{proposition}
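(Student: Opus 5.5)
The plan is to translate each piece of a ternary action on $(A,B,C)$, with $\xi^{C}_B$ and $\phi$ fixed, into data about $[A]$ using the universal property of the split extension classifier.

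First, the action $\xi^{C+B}_A$ corresponds to a unique morphism $f\colon C+B\to [A]$ with $f^\ast\overline{\xi}=\xi^{C+B}_A$. By the universal property of the coproduct, $f$ is the same as a pair $[f_C,f_B]$, where $f_C^\ast\overline{\xi}=\iota_C^\ast\xi^{C+B}_A$ and $f_B^\ast\overline{\xi}=\iota_B^\ast\xi^{C+B}_A$. So the free data on both sides agree, and it remains to match the conditions on $\phi$ with the axioms of a morphism of crossed modules. Here $k_{B\rtimes C}$ is a crossed module (a normal monomorphism with its conjugation action), and so is $c_A\colon A\to[A]$ with the action $\overline{\xi}$.

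Second, I will check the commutativity of the square. We have $k_{B\rtimes C}^\ast\xi^{C+B}_A=(f k_{B\rtimes C})^\ast\overline{\xi}$ and $\phi^\ast\chi_A=(c_A\phi)^\ast\overline{\xi}$. By uniqueness in the classifier, $k_{B\rtimes C}^\ast \xi^{C+B}_A=\phi^\ast\chi_A$ holds if and only if $f k_{B\rtimes C}=c_A\phi$.

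Third, the equivariance axiom of a crossed-module morphism is precisely the $(C+B)$-equivariance of $\phi$ with respect to $\chi^{C+B}_K$ and $f^\ast\overline{\xi}=\xi^{C+B}_A$, which is required in both descriptions.

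The main step is to show that the compatibility condition \eqref{eq:compat_act_3sdp} adds nothing beyond these two conditions. Since action representable objects give (UA) for every cospan (uniqueness is tested through the classifying map), I would argue directly. Both actions of $K\rtimes(C+B)$ on $A$ are classified by maps $K\rtimes(C+B)\to[A]$. The first is $f\circ[k_{B\rtimes C},1\rangle$. The second is $g\circ(\phi\rtimes(C+B))$, where $g\colon A\rtimes(C+B)\to[A]$ classifies the conjugation action $\chi^{A\rtimes(C+B)}_A$. I expect $g=[c_A,f\rangle$; proving this is the delicate point. It follows because $A$ is normal in $A\rtimes(C+B)$, with $j_A^\ast$ of the conjugation action equal to $\chi_A$ and $s_{C+B}^\ast$ of it equal to $\xi^{C+B}_A$. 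Both $g$ and $[c_A,f\rangle$ therefore restrict to $c_A$ and $f$ along the jointly epimorphic pair $j_A,s_{C+B}$, and $[c_A,f\rangle$ exists because $(c_A,f)$ is a morphism of actions, by the crossed-module property of $c_A$.

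It then suffices to compare the two maps $K\rtimes(C+B)\to[A]$ on the generators $j_K$ and $s_{C+B}$. On $s_{C+B}$ both give $f$. On $j_K$ they give $fk_{B\rtimes C}$ and $c_A\phi$, which agree by the second step. By the joint epimorphicity of $j_K,s_{C+B}$ the two maps coincide, so the two pulled-back actions are equal and \eqref{eq:compat_act_3sdp} holds automatically. This gives the bijection.
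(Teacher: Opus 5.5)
Your proof is correct and takes the same approach as the paper. The paper's proof is your first two steps, and it tacitly relies on the earlier proposition that, under $\mathrm{(UA)}$, condition \eqref{eq:compat_act_3sdp} reduces to $k_{B\rtimes C}^\ast\xi^{C+B}_A=\phi^\ast\chi_A$. Your direct comparison of the two classifying maps $K\rtimes(C+B)\to[A]$ on $j_K$ and $s_{C+B}$ is that same $\mathrm{(UA)}$ argument made explicit, which is worthwhile because only $A$, and not all of $\C$, is assumed to have a split extension classifier.
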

Note that $k_{B\rtimes C}$ is a crossed module since it is a normal subobject; for a proof that $c_A$ is a crossed module, see \cite{BB07}.
\begin{proof}
	By definition, any action $\xi^{C+B}_A$ of $C+B$ on $A$ is uniquely determined by a morphism $[f_C,f_B]\colon C+B\to [A]$ such that $\xi^{C+B}_A = [f_C,f_B]^\ast \overline{\xi}$; thus it is also uniquely determined $f_C$ and $f_B$, or equivalently by $\xi^{C}_A=f_C^\ast \overline{\xi}=\iota_C^\ast \xi^{C+B}_A$ and $\xi^{B}_A = f_B^\ast \overline{\xi}=\iota_B^\ast \xi^{C+B}_A$. Furthermore, the condition $k_{B\rtimes C}^\ast \xi^{C+B}_A =\phi^{\ast}\chi_A$ is equivalent to the commutativity of the square.
\end{proof}

\subsection{Algebraically coherent categories}

The condition $(UA)$ does not guarantee any existence of an action with given pullbacks along a cospan; and even when split extension classifiers exist, their definition only involve actions on one object at a time. In particular, even with those conditions, we cannot easily verify the equivariance of a morphism in terms of the induced actions.

However, when $\C$ is algebraically coherent, it is possible to address equivariance, and describe ternary semi-direct products, using only actions involving $A$, $B$ and $C$.

\begin{definition}[\cite{CGVdL15}]
	A finitely complete category $\C$ is called \emph{algebraically coherent} if for each morphism $f\colon B \to C$ of $\C$, the change-of-base functor $f^*\colon\Pt_C(\C)\to \Pt_B(\C)$ is coherent, i.e. preserves finite limits and jointly strongly epimorphic cospans.
\end{definition}

If $\C$ is a semi-abelian category, the following conditions are equivalent (see Propositions 2.7 and 3.12 and Theorem 3.18 in \cite{CGVdL15}):
\begin{itemize}
	\item $\C$ is algebraically coherent;
	\item the kernel functor $K_C\colon \Pt_C(\C)\to \C$ is coherent for every $C$ in $\C$;
	\item the functor $C\flat \_$ preserves jointly strongly epimorphic pairs for every $\C$ in $\C$;
	\item the morphism $[C\flat\iota_{B},C\flat \iota_{A}]\colon C\flat B + C\flat A \to C\flat (B+A)$ is a regular epimorphism for every $C,B,A$ in $\C$.
\end{itemize}

Examples of algebraically coherent semi-abelian categories include the categories of groups, Lie algebras, associative (algebras), and more generally all \emph{categories of interest} in the sense of Orzech \cite{Orz72}. Furthermore, if $\C$ is an algebraically coherent category, then any Birkhoff subcategory of $\C$ and any functor category $[J,\C]$ is algebraically coherent \cite{CGVdL15}.

\begin{proposition}\label{prop:AC_nu}
	The following conditions are equivalent :
	\begin{enumerate}[(1)]
		\item $\C$ is algebraically coherent;
		\item the morphism $\nu^{\kappa_{C,B},\iota_C}_A\colon (C\flat B)\flat (C\flat A) \to (C+B)\flat A$ is a regular epimorphism, for every $B,C$ in $\C$;
		\item the morphism $\nu^{j_B,s_C}_A\colon B\flat C\flat A \to (B\rtimes C)\flat A$ is a regular epimorphism, for every action $\xi$ of $C$ on $B$ in $\C$.
	\end{enumerate}
\end{proposition}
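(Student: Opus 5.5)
The plan is to reduce everything to the fourth characterization of algebraic coherence recalled above: $[C\flat\iota_B,C\flat\iota_A]\colon C\flat B+C\flat A\to C\flat(B+A)$ is a regular epimorphism for all $C,B,A$. The two maps $\nu$ in (2) and (3) will be related to this comparison morphism through the split short exact sequence \eqref{eq:canonical_sses_cba} and the coequalizer presentation \eqref{sdprod_coeq} of $B\rtimes C$.

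\emph{(1)$\Leftrightarrow$(2).} Write $\nu=\nu^{\kappa_{C,B},\iota_C}_A$. By \autoref{prop:nu_prop}(1) and the characterization of $\nu$ by $\nu\circ\eta = \iota_C\flat A$, it is a morphism of $(C\flat B)$-actions
\[((C\flat B)\flat(C\flat A),\mu)\to((C+B)\flat A,\kappa_{C,B}^*\mu^{C+B}_A).\]
Its image is therefore the closure of $\iota_C\flat A\colon C\flat A\to (C+B)\flat A$ under the action of $C\flat B$. Now \eqref{eq:canonical_sses_cba} exhibits $C\flat(B+A)\simeq ((C+B)\flat A)\rtimes (C\flat B)$. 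Under this isomorphism, the subobject $C\flat A$ (via $C\flat\iota_A$, using \eqref{eq:diag_kappa_units}) sits inside $(C+B)\flat A$, and $C\flat B$ is the section $C\flat\iota_B$. The join of $C\flat B$ and $C\flat A$ in $C\flat(B+A)$ is then, by \autoref{prop:sdp_int} applied to the pair ($C\flat B$-closure of $C\flat A$, $C\flat B$), equal to (image of $\nu$)$\rtimes (C\flat B)$. So $[C\flat\iota_B,C\flat\iota_A]$ is a regular epimorphism if and only if this join is everything, if and only if the image of $\nu$ is all of $(C+B)\flat A$. This gives (1)$\Leftrightarrow$(2). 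The one detail to check is that the closure of a subobject $S\le K$ under an action of $D$ is precisely the image of $D\flat S\to D\flat K\to K$, and that its join with $D$ in $K\rtimes D$ is that closure $\rtimes D$. Both follow from \autoref{prop:sdp_UP} and \autoref{prop:sdp_int}.

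\emph{(2)$\Rightarrow$(3).} Consider the morphism of cospans $(C\xrightarrow{\iota_C} C+B\xleftarrow{\iota_B}B)\to(C\xrightarrow{s_C}B\rtimes C\xleftarrow{j_B} B)$ given by $q_\xi$. By \autoref{prop:nu_prop}(1),
\[\nu^{j_B,s_C}_A\circ(\text{suitable }\flat\text{-maps}) = q_\xi\flat A\circ \nu^{\iota_B,\iota_C}_A.\]
Since $\flat$ preserves regular epimorphisms in a semi-abelian category, $q_\xi\flat A$ is a regular epimorphism. It therefore suffices to treat the free case $B\rtimes C=C+B\simeq (C\flat B)\rtimes C$, and to show that $\nu^{\iota_B,\iota_C}_A$, with source $B\flat C\flat A$, is a regular epimorphism. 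For this I will use that $C+B$ is generated, as a ternary semi-direct product, by $C\flat B$ and $C$, and factor $\nu^{\iota_B,\iota_C}_A$ through $\nu^{\kappa_{C,B},\iota_C}_A$ via \autoref{prop:nu_prop}(2). The difficulty is that (3) takes $B$ on the outside, whereas in (2) the acting object is $C\flat B$. I expect to handle this by applying (2) with the roles adjusted, i.e.\ to the cospan $B\to C+B\leftarrow C$, and using the square of \autoref{prop:nu_prop}(2) to compose regular epimorphisms.

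\emph{(3)$\Rightarrow$(1).} Apply (3) to the action $\mu^C_B$, for which $B'\rtimes C=C+B$ with $B'=C\flat B$. Then $\nu^{\kappa_{C,B},\iota_C}_A$ is regular epi, which is exactly (2), and (1) follows.

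The main obstacle is the image computation in (1)$\Leftrightarrow$(2): identifying the join inside $C\flat(B+A)$ with the semidirect product of the closure. Getting the directions of the maps in \eqref{eq:diag_kappa_units} right is what makes this step delicate.
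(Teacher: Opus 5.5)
Your treatment of (3)$\Rightarrow$(2)$\Leftrightarrow$(1) is fine. (3)$\Rightarrow$(2) is exactly the paper's specialization to $\mu^C_B$. Your image/join argument for (1)$\Leftrightarrow$(2) is a valid, longer route: the paper gets it directly from diagram \eqref{eq:comp_ses_algcoh}, where $\nu^{\kappa_{C,B},\iota_C}_A$, $[C\flat\iota_B,C\flat\iota_A]$ and $1_{C\flat B}$ form a morphism of short exact sequences, so the left map is a regular epimorphism iff the middle one is.

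The gap is in (2)$\Rightarrow$(3). With your cospan morphism $(1_B,q_\xi,1_C)$, \autoref{prop:nu_prop}(1) gives $\nu^{j_B,s_C}_A=(q_\xi\flat A)\circ\nu^{\iota_B,\iota_C}_A$. You therefore reduce (3) to $\nu^{\iota_B,\iota_C}_A\colon B\flat C\flat A\to (C+B)\flat A$ being a regular epimorphism; this map is $\gamma_{B,C,A}$ up to $B+C\cong C+B$. That claim is false already in $\Grp$, which is algebraically coherent:
\begin{itemize}
\item $(C+B)\flat A$ is the normal closure of $A$ in $C+B+A$, i.e.\ the free product of the conjugates $wAw^{-1}$ for $w\in C+B$.
\item The image of $\nu^{\iota_B,\iota_C}_A$ is generated only by the free factors with $w=bc$.
\item So for $b,c,a\neq 1$, the element $cbab^{-1}c^{-1}$ is not in the image.
\end{itemize}
No rearrangement via \autoref{prop:nu_prop}(2) can rescue a reduction to a false statement. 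Also, the ``free case'' $C+B\simeq (C\flat B)\rtimes C$ is not an instance of your reduced claim, since there the object in the outer slot is $C\flat B$, not $B$.

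The missing idea is to let $C\flat B$ map onto $B$ through the action itself, rather than comparing with $\iota_B$. The square $(\ast)$ in \eqref{sdprod_PO} gives $q_\xi\kappa_{C,B}=j_B\xi^C_B$. Hence $(\xi^C_B,q_\xi,1_C)$ is a morphism of cospans from $C\flat B\xrightarrow{\kappa_{C,B}}C+B\xleftarrow{\iota_C}C$ to $B\xrightarrow{j_B}B\rtimes C\xleftarrow{s_C}C$, and \autoref{prop:nu_prop}(1) yields
\[\nu^{j_B,s_C}_A\circ(\xi^C_B\flat C\flat A)=(q_\xi\flat A)\circ\nu^{\kappa_{C,B},\iota_C}_A .\]
Under (2), the right-hand side is a composite of regular epimorphisms, since $q_\xi\flat A$ is one, as you noted. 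Therefore $\nu^{j_B,s_C}_A$ is a regular epimorphism: in a regular category, if $gf$ is a regular epimorphism then so is $g$. This is the paper's argument, and it removes the ``$B$ on the outside'' difficulty entirely.
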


\begin{proof}
	To prove the equivalence of $(1)$ and $(2)$, we first observe that $\nu^{\kappa_{C,B},\iota_C}_A$ is the unique morphism that makes the diagram
	\begin{equation}\label{eq:comp_ses_algcoh}
		\begin{tikzcd}[column sep=large]
			(C\flat B) \flat (C\flat A) \ar[r,"\kappa_{C\flat B,C\flat A}"] \ar[d,"\nu^{\kappa_{C,B},\iota_C}_A"'] & C\flat B + C\flat A \ar[r,"{[1,0]}"] \ar[d,"{[C\flat \iota_B,C\flat \iota_A]}"] & C\flat B \ar[d,equal] \\
			(C+B)\flat A \ar[r,"\overline{\kappa}_{C,B,A}"'] \ar[d,equal] & C\flat (B+A) \ar[r,"{C\flat [1,0]}"'] \ar[d,"\kappa_{C,B+A}"] & C\flat B \ar[d,"\kappa_{C,B}"] \\
			(C+B)\flat A \ar[r,"\kappa_{C+B,A}"'] & C+B+A \ar[r,"{[\iota_C,\iota_B,0]}"']& C+B
		\end{tikzcd}
	\end{equation}
	commute, since the morphism $[\iota_{C+B}\kappa_{C,B},\kappa_{C+B,A}\circ \iota_C\flat A]$ is equal to the middle vertical composition. In particular, since all rows in this diagram are short exact sequences, $\nu^{\kappa_{C,B},\iota_C}_A$ is a regular epimorphism if and only if $[C\flat \iota_B,C\flat \iota_A]$ is one (by Proposition 8 in \cite{Bou01} or Lemma 4.2.5 in \cite{BB04}), and this holds for all $A,B,C$ if and only if $\C$ is algebraically coherent.
	
	The implication $(3)\Rightarrow (2)$ is clear, since $(2)$ is the special case of $(3)$ corresponding to the action  $(C\flat B,C,\mu^C_B)$. For the converse, since any action induces a morphism of cospans
	\[\begin{tikzcd}
		C\flat B \ar[r,"\kappa_{C,B}"] \ar[d,"\xi^{C}_{B}"'] & C+B \ar[d,"q"] & \ar[l,"\iota_C"'] C \ar[d,equal]\\
		B \ar[r,"j_B"'] & B\rtimes C & \ar[l,"s_C"] C
	\end{tikzcd}\]
	we have the identity $\nu^{j_B,s_C}_A \circ \xi^C_B \flat C\flat A = q\flat A \circ \nu^{\kappa_{C,B},\iota_C}_A$. Thus, if $\nu^{\kappa_{C,B},\iota_C}_A$ is a regular epimorphism, then so is $\nu^{j_B,s_C}_A$.
\end{proof}

Thanks to this result, we can characterize actions of $B\rtimes C$ on $A$, as follows :
\begin{theorem}\label{thm:sdp_alg_coh}
	Let $\C$ be a semi-abelian algebraically coherent category, and $(B,C,\xi^{C}_B)$ be an action in $\C$. Let $\xi^B_A,\xi^{C}_A$ be actions of $B$ and $C$ on $A$, and $\xi\colon (B\rtimes C)\flat A\to A$ be a morphism. Then the following conditions are equivalent :
	\begin{enumerate}[(1)]
		\item $\xi$ is an action of $B\rtimes C$ on $A$, such that $j_B^\ast\xi = \xi^{B}_A$ and $s_C^\ast\xi = \xi^{C}_A$;
		\item $\xi\circ \eta^{B\rtimes C}_A=1$, and $\xi$ is $B$-equivariant with respect to $j_B^\ast\mu^{B\rtimes C}_A$ and $\xi^{B}_A$ and $C$-equivariant with respect to $s_C^\ast\mu^{B\rtimes C}_A$ and $\xi^{C}_A$;
		\item the following diagrams commute :
		\begin{equation}\begin{tikzcd}[column sep=large]
				B\flat C\flat A \ar[r,"\nu^{j_B,s_C}_A"] \ar[d,"B\flat \xi^{C}_A" swap] & (B\rtimes C)\flat A \ar[d,"\xi" description] & \ar[l,"\nu^{s_C,j_B}_A" swap] C\flat B \flat A\ar[d,"C\flat \xi^{B}_A"]  \\
				B\flat A \ar[r,"\xi^{B}_A"'] & A & C\flat A\ar[l,"\xi^{C}_A"]
			\end{tikzcd}\label{eq:compat_act_AC}\end{equation}
	\end{enumerate}
	Furthermore, if $\xi'$ is an action of $B\rtimes C$ on $A'$, a morphism $f\colon A\to A'$ is $B\rtimes C$-equivariant if and only if $f$ is $B$-equivariant and $C$-equivariant (with respect to the actions obtained by pulling back $\xi'$ along $j_B$ and $s_C$).
\end{theorem}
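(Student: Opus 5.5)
I would prove the cycle $(1)\Rightarrow(3)\Rightarrow(2)\Rightarrow(1)$. The tools are \autoref{prop:AC_nu}, which says that $\nu^{j_B,s_C}_A$ and $\nu^{s_C,j_B}_A$ are regular epimorphisms, and the characterization of $\nu^{c,b}_A$ as the unique $C$-equivariant morphism $(C\flat B\flat A,\mu^C_{B\flat A})\to (Z\flat A,c^\ast\mu^Z_A)$ extending $b\flat A$.

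For $(1)\Rightarrow(2)$ the argument is direct. The unit law is part of being an action. The multiplication law says that $\xi$ is a morphism of $(B\rtimes C)$-actions from $(( B\rtimes C)\flat A,\mu^{B\rtimes C}_A)$ to $(A,\xi)$. Pulling this back along $j_B$ and $s_C$ gives the two equivariance statements, since $j_B^\ast\xi=\xi^B_A$ and $s_C^\ast\xi=\xi^C_A$.

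For $(2)\Rightarrow(3)$, consider the square involving $\nu^{j_B,s_C}_A$. Both composites $\xi\circ\nu^{j_B,s_C}_A$ and $\xi^B_A\circ B\flat\xi^C_A$ are $B$-equivariant morphisms from $(B\flat C\flat A,\mu^B)$ to $(A,\xi^B_A)$. Precomposing with $\eta^B_{C\flat A}$ gives, on the one hand, $\xi\circ (s_C\flat A)=s_C^\ast\xi$, and on the other hand $\xi^C_A$. So it suffices to know that $s_C^\ast\xi=\xi^C_A$, and this follows from $C$-equivariance together with the unit law. Indeed, $s_C^\ast\xi=\xi\circ\mu^{B\rtimes C}_A\circ (s_C\flat\eta^{B\rtimes C}_A)$ up to the standard identity, and equivariance rewrites this as $\xi^C_A\circ C\flat(\xi\eta)=\xi^C_A$. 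The same holds with $B$ and $C$ interchanged. This gives the two squares by freeness of $B\flat(-)$.

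The main step is $(3)\Rightarrow(1)$. First, the pullbacks of $\xi$ along $j_B$ and $s_C$ are recovered by precomposing the squares with $\eta$. Next I verify the unit law $\xi\eta^{B\rtimes C}_A=1$, using $\eta^{B\rtimes C}_A=\nu^{j_B,s_C}_A\circ\eta^B\circ\eta^C$ and the unit laws of $\xi^B_A$ and $\xi^C_A$.

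For associativity I need $\xi\circ\mu=\xi\circ(B\rtimes C)\flat\xi$ as maps $(B\rtimes C)\flat(B\rtimes C)\flat A\to A$. My plan is to precompose with a regular epimorphism covering this object. Applying \autoref{prop:AC_nu} twice, together with the fact that $X\flat-$ preserves regular epimorphisms, the map
\[(B\rtimes C)\flat B\flat C\flat A\to (B\rtimes C)\flat(B\rtimes C)\flat A\]
is a regular epimorphism, namely $(B\rtimes C)\flat\nu^{j_B,s_C}_A$. The identity
\[(B\rtimes C)\flat\xi\circ(B\rtimes C)\flat\nu=(B\rtimes C)\flat(\xi^B_A\circ B\flat\xi^C_A)\]
then follows from square (3). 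Using \autoref{prop:nu_prop} to rewrite $\mu\circ(B\rtimes C)\flat\nu$ as a composite of $\nu$-maps, I reduce to checking that $\xi$ composed with $(B\rtimes C)\flat\xi^B_A$ and the relevant $\nu$-maps agree. I would cover $(B\rtimes C)\flat(-)$ once more by $\nu^{j_B,s_C}$ and iterate the two squares, so that everything reduces to the associativity of $\xi^B_A$ and $\xi^C_A$ and the naturality identities of \autoref{prop:nu_prop}(2). This bookkeeping is the main obstacle.

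An alternative I would try first is to exploit that a morphism out of a free $B\rtimes C$-algebra is determined by equivariance for $j_B$ and $s_C$. Concretely, I would show that the two composites are both $B$- and $C$-equivariant and agree after precomposition with the units. Joint epimorphicity of $\nu^{j_B,s_C}$ and $\nu^{s_C,j_B}$ then gives equality.

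The final claim about $f$ follows the same pattern. $(B\rtimes C)$-equivariance of $f$ implies $B$- and $C$-equivariance by pulling back. Conversely, precomposing $f\xi$ and $\xi'\circ(B\rtimes C)\flat f$ with the regular epimorphism $\nu^{j_B,s_C}_A$ and using square (3) for $\xi$ and $\xi'$ together with naturality of $\nu$ reduces the equality to $f\circ\xi^B_A\circ B\flat\xi^C_A=\xi'^B\circ B\flat\xi'^C\circ B\flat C\flat f$. This holds by the $C$-equivariance and then the $B$-equivariance of $f$.
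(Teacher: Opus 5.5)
Most of your proposal matches the paper. The steps $(1)\Rightarrow(2)$ and $(2)\Rightarrow(3)$, the recovery of $j_B^\ast\xi$, $s_C^\ast\xi$ and the unit law from (3), and the argument for $f$ are the paper's. Your first plan for associativity is also the paper's, but the sketch understates the bookkeeping. You rewrite $\mu^{B\rtimes C}_A\circ(B\rtimes C)\flat\nu^{j,s}_A=\nu^{1,s}_A\circ\nu^{1,j}_{C\flat A}$ and cover by $\nu^{j,s}_{B\flat C\flat A}$, where \autoref{prop:nu_prop} gives $\nu^{1,j}_{C\flat A}\circ\nu^{j,s}_{B\flat C\flat A}=\nu^{j,1}_{C\flat A}\circ B\flat\nu^{s,j}_{C\flat A}$. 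You are then left with $\xi\circ\nu^{1,s}_A\circ\nu^{j,1}_{C\flat A}$, which the squares of (3) do not control directly. The paper evaluates it with a third cover, $B\flat\nu^{j,s}_{C\flat A}$, together with the associativity of $\xi^B_A$ and $\xi^C_A$.

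Your alternative is genuinely different and works, but it silently uses (2). The composites $\xi\circ\mu^{B\rtimes C}_A$ and $\xi\circ(B\rtimes C)\flat\xi$ are $B$- and $C$-equivariant only once $\xi$ is. From (3) this costs one cancellation. Since $\nu^{j,s}_A$ is $B$-equivariant,
\[\xi\circ j_B^\ast\mu^{B\rtimes C}_A\circ B\flat\nu^{j,s}_A=\xi\circ\nu^{j,s}_A\circ\mu^B_{C\flat A}=\xi^B_A\circ\mu^B_A\circ B\flat B\flat\xi^C_A=\xi^B_A\circ B\flat\xi\circ B\flat\nu^{j,s}_A,\]
by (3) and the associativity of $\xi^B_A$. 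Here $B\flat\nu^{j,s}_A$ is a regular epimorphism, and the argument for $C$ is symmetric, using $C\flat\nu^{s,j}_A$. So the alternative is really the cycle $(1)\Rightarrow(3)\Rightarrow(2)\Rightarrow(1)$ you announced in your first sentence, but this $(3)\Rightarrow(2)$ step never appears in your text.

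The engine for $(2)\Rightarrow(1)$ is this: any $B$- and $C$-equivariant $g\colon(B\rtimes C)\flat M\to A$ satisfies $g\circ\nu^{j,s}_M=\xi^B_A\circ B\flat\xi^C_A\circ B\flat C\flat(g\,\eta^{B\rtimes C}_M)$. Hence $g$ is determined by $g\,\eta^{B\rtimes C}_M$; epimorphicity of $\nu^{j,s}_M$ alone suffices, so you do not need $\nu^{s,j}$ jointly. With $M=(B\rtimes C)\flat A$, both composites restrict to $\xi$ along the unit.

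Compared with the paper, this replaces the large diagram chases and \autoref{prop:nu_prop}(2) by the universal property of $\nu$, the monad laws and two cancellations. It also brings associativity and the statement about $f$ under a single uniqueness principle. The paper's computation instead works directly from the squares of (3) and never isolates the equivariance of $\xi$.
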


\begin{proof}
	Assuming (1) holds, then, since $\xi$ is an action of $B\rtimes C$ on $A$, by definition, $\xi\circ\eta^{B\rtimes C}_A=1$, and $\xi$ is $B\rtimes C$-equivariant with respect to $\mu^{B\rtimes C}_A$ and itself; as a consequence, it is also $B$-equivariant with respect to $j_B^{\ast}\mu^{B\rtimes C}_A$ and $j_B^\ast\xi=\xi^{B}_A$ and $C$-equivariant with respect to $s_C^{\ast}\mu^{B\rtimes C}_A$ and $s_C^\ast\xi=\xi^{C}_A$, which proves (2).
	
	To prove that (2) implies (3), we observe that if $\xi$ is $C$-equivariant, then $\xi\circ (s_C\flat A)$ and $\xi^{C}_A\colon C\flat A\to A$ are both $C$-equivariant with respect to $\mu^{C}_A$ and $\xi^{C}_A$, and if moreover
	\[(\xi \circ s_C\flat A )\circ \eta^{C}_{A}= \xi\circ \eta^{B\rtimes C}_A =1 = \xi^{C}_{A}\circ \eta^{C}_{A},\]
	then $\xi\circ s_C\flat A=\xi^{C}_A$. If $\xi$ is also $B$-equivariant, then $\xi\circ \nu^{j,s}_A$ is $B$-equivariant with respect to $\mu^B_{C\flat A}$ and $\xi^B_A$, and thus
	\begin{align*}
		\xi \circ \nu^{j,s}_A & = \xi^B_A \circ B\flat (\xi \circ \nu^{j,s}_A \circ \eta^B_{C\flat A}) \\
		& =\xi^B_A \circ B\flat (\xi \circ s_C \flat A ) = \xi^{B}_A \circ B\flat \xi^{C}_A.
	\end{align*}
	The identity $\xi \circ \nu^{s,j}_A = \xi^{C}_A \circ C\flat \xi^{B}_A$ can be proved similarly.
	
	Suppose now that $(3)$ holds. We then have
	\[\xi \circ j_B \flat A = \xi \circ \nu^{j,s}_A \circ B\flat \eta^{C}_A = \xi^B_A \circ B\flat (\xi^{C}_A \circ \eta^{C}_A) = \xi^B_A, \]
	and thus $\xi\circ\eta^{B\rtimes C}_A = \xi\circ j_B\flat A \circ \eta^B_A = \xi^B_A\circ \eta^B_A = 1$, so that $\xi$ satisfies the unit axiom. A similar argument also shows that $\xi\circ s_C\flat A = \xi^{C}_A$.
	
	For the associative axiom, we first observe that, by \autoref{prop:nu_prop}, we have a commutative diagram
	
	\[\begin{tikzcd}
		{B\flat B\flat C\flat C\flat A} & {B\flat C\flat C\flat A} & {B\flat C\flat A} \ar[r,"B\flat \xi^{C}_A"]& B\flat A \ar[d,"\xi^{B}_{A}"] \\
		{B\flat (B\rtimes C)\flat C\flat A} & {(B\rtimes C)\flat C\flat A} & {(B\rtimes C)\flat A} \ar[r,"\xi"'] & A,
		\arrow["{\mu^B_{C\flat C\flat A}}", from=1-1, to=1-2]
		\arrow["{B\flat \nu^{j,s}_{C\flat A}}"', from=1-1, to=2-1]
		\arrow["{B\flat\mu^{C}_{A}}", from=1-2, to=1-3]
		\arrow["{\nu^{j,s}_{C\flat A}}"', from=1-2, to=2-2]
		\arrow["{\nu^{j,s}_A}", from=1-3, to=2-3]
		\arrow["{\nu^{j,1}_{C\flat A}}"', from=2-1, to=2-2]
		\arrow["{\nu^{1,s}_A}"', from=2-2, to=2-3]
	\end{tikzcd}\]
	and since $\xi^{B}_{A}$ and $\xi^{C}_{A}$ are actions of $C$ and $B$ on $A$, we also have a commutative diagram
	\[\begin{tikzcd}
		{B\flat B\flat C\flat C\flat A} &[2em] & {B\flat C\flat C\flat A} & {B\flat C\flat A} \\
		& {B\flat B\flat C\flat A} & {B\flat C\flat A} & \\
		& & {B\flat B\flat A} & {B\flat A} \\
		{B\flat (B\rtimes C)\flat C\flat A} & {B\flat (B\rtimes C)\flat A} & {B\flat A} & A.
		\arrow["{\mu^B_{C\flat C\flat A}}", from=1-1, to=1-3]
		\arrow["{B\flat B\flat C\flat \xi^{C}_A}"{description}, from=1-1, to=2-2]
		\arrow["{B\flat \nu^{j,s}_{C\flat A}}"', from=1-1, to=4-1]
		\arrow["{B\flat \mu^{C}_A}", from=1-3, to=1-4]
		\arrow["{B\flat C\flat \xi^{C}_A}", from=1-3, to=2-3]
		\arrow["{B\flat \xi^{C}_A}", from=1-4, to=3-4]
		\arrow["{\mu^B_{C\flat A}}", from=2-2, to=2-3]
		\arrow["{B\flat B\flat \xi^{C}_A}"{description}, from=2-2, to=3-3]
		\arrow["{B\flat \nu^{j,s}_A}"', from=2-2, to=4-2]
		\arrow["{B\flat \xi^{C}_A}"{description}, from=2-3, to=3-4]
		\arrow["{\mu^B_{ A}}"', from=3-3, to=3-4]
		\arrow["{B\flat \xi^{B}_A}"', from=3-3, to=4-3]
		\arrow["{\xi^B_A}", from=3-4, to=4-4]
		\arrow["{B\flat (B\rtimes C)\flat \xi^{C}_A}"', from=4-1, to=4-2]
		\arrow["{B\flat \xi}"', from=4-2, to=4-3]
		\arrow["{\xi^B_A}"', from=4-3, to=4-4]
	\end{tikzcd}\]
	By comparing these two diagrams, we find that, since $B\flat \nu^{j,s}_{C\flat A}$ is a regular epimorphism,
	\[\xi\circ \nu^{1,s}_A \circ \nu^{j,1}_{C\flat A}= \xi^{B}_A\circ B\flat \xi \circ B\flat (B\rtimes C)\flat \xi^{C}_A.\]
	
	Again by \autoref{prop:nu_prop}, we have a commutative diagram
	
	\[\begin{tikzcd}[column sep=huge]
		{B\flat C\flat B\flat C\flat A} & {(B\rtimes C)\flat B\flat C\flat A} & {(B\rtimes C)\flat (B\rtimes C)\flat A} \\
		{B\flat (B\rtimes C)\flat C\flat A} & {(B\rtimes C)\flat C\flat A} & {(B\rtimes C)\flat A,}
		\arrow["{\nu^{j,s}_{B\flat C\flat A}}", from=1-1, to=1-2]
		\arrow["{B\flat \nu^{s,j}_{C\flat A}}"', from=1-1, to=2-1]
		\arrow["{(B\rtimes C)\flat\nu^{j,s}_{A}}", from=1-2, to=1-3]
		\arrow["{\nu^{1,j}_{C\flat A}}", from=1-2, to=2-2]
		\arrow["{\mu^{B\rtimes C}_A}", from=1-3, to=2-3]
		\arrow["{\nu^{j,1}_{C\flat A}}"', from=2-1, to=2-2]
		\arrow["{\nu^{1,s}_A}"', from=2-2, to=2-3]
	\end{tikzcd}\]
	and using \eqref{eq:compat_act_AC}, we find that the diagram
	\[\begin{tikzcd}
		{B\flat C\flat B\flat C\flat A} &[2em] & {B\flat (B\rtimes C)\flat C\flat A} & \\
		& {B\flat C\flat B\flat A} && {B\flat (B\rtimes C)\flat A} \\
		{(B\rtimes C)\flat B\flat C\flat A} & {(B\rtimes C)\flat B\flat A} & {B\flat C\flat A} & {B\flat A} \\
		{(B\rtimes C)\flat (B\rtimes C)\flat A} && {(B\rtimes C)\flat A} & A
		\arrow["{B\flat \nu^{s,j}_{C\flat A}}", from=1-1, to=1-3]
		\arrow["{B\flat C\flat B\flat \xi^{C}_A}"{description}, from=1-1, to=2-2]
		\arrow["{\nu^{j,s}_{B\flat C\flat A}}"', from=1-1, to=3-1]
		\arrow["{B\flat \xi^{C}_A}"{description}, from=1-3, to=2-4]
		\arrow["{B\flat \nu^{s,j}_A}", from=2-2, to=2-4]
		\arrow["{\nu^{j,s}_{B\flat A}}"{swap}, from=2-2, to=3-2]
		\arrow["{B\flat C\flat \xi^{B}_A}"{description}, from=2-2, to=3-3]
		\arrow["{B\flat \xi}", from=2-4, to=3-4]
		\arrow["{(B\rtimes C)\flat B\flat \xi^{C}_A}"{swap}, from=3-1, to=3-2]
		\arrow["{(B\rtimes C)\flat \nu^{j,s}_A}"', from=3-1, to=4-1]
		\arrow["{(B\rtimes C)\flat \xi^{B}_A}"{description}, from=3-2, to=4-3]
		\arrow["{B\flat \xi^{C}_A}", from=3-3, to=3-4]
		\arrow["{\nu^{j,s}_A}", from=3-3, to=4-3]
		\arrow["{\xi^B_A}", from=3-4, to=4-4]
		\arrow["{(B\rtimes C)\flat \xi}"', from=4-1, to=4-3]
		\arrow["\xi"', from=4-3, to=4-4]
	\end{tikzcd}\]
	commutes as well. By comparing the outer compositions of these two diagrams with the previous identity, we conclude that
	\[\xi\circ \mu^{B\rtimes C}_A \circ (B\rtimes C)\flat \nu^{j,s}_A \circ \nu^{j,s}_{B\flat C\flat A} = \xi \circ (B\rtimes C)\flat \xi\circ (B\rtimes C)\flat \nu^{j,s}_A \circ \nu^{j,s}_{B\flat C\flat A},\]
	and since $(B\rtimes C)\flat \nu^{j,s}_A$ and $\nu^{j,s}_{B\flat C\flat A}$ are regular epimorphisms, this implies that
	\[\xi\circ \mu^{B\rtimes C}_A  = \xi \circ (B\rtimes C)\flat \xi.\]
	Finally, if $\xi'$ is an action of $(B\rtimes C)$ on $A'$, we have two diagrams
	\[\begin{tikzcd}
		B\flat C\flat A \ar[d,swap,"\nu^{j,s}_A"{name=s1}]\ar[r,"B\flat C\flat f"] &[1em] B\flat C\flat A' \ar[d,"\nu^{j,s}_{A'}"{name=t1}] &[2em]B\flat C\flat A \ar[d,swap,"B\flat\xi^{C}_{A}"{name=s3}]\ar[r,"B\flat C\flat f"] &[1em] B\flat C\flat A' \ar[d,"B\flat\xi^{C}_{A'}"{name=t3}] \\
		(B\rtimes C)\flat A  \ar[r,swap,"(B\rtimes C)\flat f"{name=ml}] \ar[d,swap,"\xi"{name=s2}] & (B\rtimes C)\flat A' \ar[d,"\xi'"{name=t2}] & B\flat A  \ar[r,swap,"B\flat f"]\ar[d,swap,"\xi^{B}_{A}"{name=s4}] & B\flat A'\ar[d,"\xi^{B}_{A'}"{name=t4}]\\
		A \ar[r,swap,"f"{name=bl}] & A' & A \ar[r,swap,"f"{name=br}] & A'\ar[from=s1,to=t1,phantom,"(i)"]\ar[from=s2,to=t2,phantom,"(ii)"] \ar[from=s3,to=t3,phantom,"(iii)"]\ar[from=s4,to=t4,phantom,"(iv)"]
	\end{tikzcd}\]
	where the vertical compositions are equal, as shown above, and the square $(i)$ commutes by naturality. If $f$ is $C$-equivariant and $B$-equivariant, then the squares $(iii)$ and $(iv)$ commute, so that the composite rectangle $(iii)+(iv)=(i)+(ii)$ commutes as well. Since $\nu^{j,s}_{A} $ is a regular epimorphism, the square $(ii)$ is commutative, i.e. $f$ is $(B\rtimes C)$-equivariant.
\end{proof}

\begin{corollary}
	If $\C$ is algebraically coherent, the functor
	\[(j_B^*,s_C^*) \colon \Act_{B\rtimes C}(\C) \to \Act_B(\C)\times_\C \Act_C(\C)\]
	is fully faithful; equivalently, so is
	\[(\widetilde{U^C},U^{(B,\xi^{C}_B)})\colon \Act_{(B,\xi^{C}_B)}(\Act_C(\C)) \to \Act_B(\C)\times_\C \Act_C(\C).\]
\end{corollary}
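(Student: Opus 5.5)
Faithfulness is immediate, since both categories of actions sit over $\C$ through faithful forgetful functors. A morphism $f\colon (A,\xi)\to(A',\xi')$ in $\Act_{B\rtimes C}(\C)$ is a morphism $f\colon A\to A'$ in $\C$ that is $(B\rtimes C)$-equivariant. Its image under $(j_B^\ast,s_C^\ast)$ is the same underlying $f$, now regarded as a $B$-equivariant and $C$-equivariant map. Two morphisms with the same image therefore have the same underlying arrow, and hence coincide.

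For fullness, I take actions $\xi,\xi'$ of $B\rtimes C$ on $A,A'$ and a morphism $f\colon A\to A'$ that is $B$-equivariant for $j_B^\ast\xi$, $j_B^\ast\xi'$ and $C$-equivariant for $s_C^\ast\xi$, $s_C^\ast\xi'$. The final statement of \autoref{thm:sdp_alg_coh} says exactly that such an $f$ is $(B\rtimes C)$-equivariant. That statement rests on two facts:
\begin{itemize}
\item $\nu^{j_B,s_C}_A$ is a regular epimorphism, by \autoref{prop:AC_nu}, using algebraic coherence;
\item $\xi\circ\nu^{j_B,s_C}_A=\xi^B_A\circ B\flat\xi^C_A$, by condition (3) of the theorem.
\end{itemize}
So $f$ lifts to a morphism in $\Act_{B\rtimes C}(\C)$, and the functor is full. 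This is essentially the only nontrivial step. Its hard part, the cancellation of the regular epimorphism, has already been carried out in the theorem, so I only need to check that the hypotheses there are precisely the data of a morphism in the fibre product $\Act_B(\C)\times_\C\Act_C(\C)$.

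For the second formulation, I use \autoref{prop:equivalence_act_3}. The equivalence $\Psi_{\xi^C_B}$ satisfies $U_{(B,\xi^C_B)}\circ\Psi=s_C^\ast$ and $\widetilde{U_C}\circ\Psi=j_B^\ast$. Hence $(\widetilde{U_C},U_{(B,\xi^C_B)})\circ\Psi_{\xi^C_B}=(j_B^\ast,s_C^\ast)$, compatibly with the forgetful functors to $\C$, so the two functors land in the same fibre product. Since $\Psi_{\xi^C_B}$ is an equivalence, one functor is fully faithful if and only if the other is. The only care needed is to check that these equalities are genuine identities, or at least natural isomorphisms compatible with the projections to $\C$. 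This follows from the construction of $\Psi$ through \autoref{prop:equivalence_pt_3}, which is an isomorphism of categories commuting with the kernel functors.
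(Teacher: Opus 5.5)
Your proposal is correct and takes essentially the paper's approach. The paper gives no separate proof: the corollary is read off from the final clause of \autoref{thm:sdp_alg_coh} (a morphism between $(B\rtimes C)$-actions is equivariant as soon as it is $B$- and $C$-equivariant, by cancelling the regular epimorphism $\nu^{j_B,s_C}_A$), and the second formulation follows, as you say, by composing with the equivalence $\Psi_{\xi^{C}_{B}}$ of \autoref{prop:equivalence_act_3}, whose compatibilities $\widetilde{U_C}\circ\Psi_{\xi^C_B}=j_B^\ast$ and $U_{(B,\xi^C_B)}\circ\Psi_{\xi^C_B}=s_C^\ast$ are part of that theorem.
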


\begin{remark}
	Note that the proof of \autoref{thm:sdp_alg_coh} only uses the fact that $\nu^{j,s}$ is a regular epimorphism, and the proof of this fact only uses the fact that the functor $C\flat\_$ preserves jointly strongly epimorphic pairs; in particular, the results are valid in any semi-abelian category when the functor $K_C$ is coherent.
\end{remark}

Thanks to this result, we can prove that a class of cospans satisfy $(UA)$ in any algebraically coherent category.

\begin{lemma}\label{prop:cospan_morph}
	Given a morphism of cospans
	\[\begin{tikzcd}
		B\ar[r,"b"] \ar[d,"p"'] & X\ar[d,"r"] & \ar[l,"c"'] \ar[d,"q"]C\\
		B'\ar[r,"b'"'] & X' & \ar[l,"c'"] C'
	\end{tikzcd}\]
	where $p,q,r$ are regular epimorphisms, then
	\begin{enumerate}[(1)]
		\item if the pair $(b,c)$ is jointly strongly epimorphic then so is the pair $(b',c')$, and the converse holds if $r$ is an isomorphism;
		\item if the pair $(b,c)$ satisfies $\mathrm{(UA)}$ then so does the pair $(b',c')$, and the converse holds if $r$ is an isomorphism.
	\end{enumerate}
\end{lemma}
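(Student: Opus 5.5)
The plan is to reduce both parts to the commutative square $r\circ[b,c] = [b',c']\circ(p+q)$, where $p+q\colon B+C\to B'+C'$ is a regular epimorphism because it is a coproduct of regular epimorphisms. A pair is jointly strongly epimorphic exactly when the induced morphism from the coproduct is a (strong $=$ regular, in the semi-abelian setting) epimorphism.

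For (1), assume $[b,c]$ is a regular epimorphism. Then $[b',c']\circ(p+q) = r\circ[b,c]$ is a composite of regular epimorphisms, hence a regular epimorphism, since regular epis compose in a regular category. Therefore $[b',c']$ is a regular epimorphism. For the converse with $r$ invertible, assume $[b',c']$ is regular epic. The composite $[b',c'](p+q)$ is then regular epic, so $[b,c] = r^{-1}[b',c'](p+q)$ is as well.

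For (2), first assume $(b,c)$ satisfies (UA), and let $\xi_1,\xi_2\colon X'\flat A\to A$ be actions with $b'^\ast\xi_1=b'^\ast\xi_2$ and $c'^\ast\xi_1=c'^\ast\xi_2$. Pull back along $r$. Since $b'p = rb$, we get $b^\ast r^\ast\xi_i = p^\ast b'^\ast\xi_i$, and similarly $c^\ast r^\ast\xi_i = q^\ast c'^\ast\xi_i$. So $r^\ast\xi_1$ and $r^\ast\xi_2$ agree along $b$ and $c$, and (UA) for $(b,c)$ gives $r^\ast\xi_1=r^\ast\xi_2$, that is, $\xi_1\circ(r\flat A) = \xi_2\circ(r\flat A)$. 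It remains to see that $r\flat A$ is an epimorphism. The functor $\_\flat A$ preserves regular epimorphisms: $r+A$ is a regular epi, and the morphism $(r\flat A, r)$ of split short exact sequences, with $r+A$ and $r$ regular epic, forces $r\flat A$ regular epic by Proposition 8 in \cite{Bou01} (the same lemma used in the proof of \autoref{prop:AC_nu}). Hence $\xi_1=\xi_2$.

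For the converse with $r$ an isomorphism, we may take $r=1$ after replacing $(b',c')$ by $(r^{-1}b', r^{-1}c')$; since $r$ is invertible, (UA) transfers along it trivially. So assume $b'p=b$, $c'q=c$, and $(b',c')$ satisfies (UA). Given $\xi_1,\xi_2$ with $b^\ast\xi_1=b^\ast\xi_2$, we have $p^\ast b'^\ast\xi_1 = p^\ast b'^\ast\xi_2$, i.e. $b'^\ast\xi_i\circ(p\flat A)$ agree. Since $p\flat A$ is a regular epi by the argument above, $b'^\ast\xi_1=b'^\ast\xi_2$. The same holds for $c'$, and then (UA) for $(b',c')$ gives $\xi_1=\xi_2$.

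The main obstacle is the fact that $\_\flat A$, and symmetrically $B\flat\_$, preserves regular epimorphisms. I would justify it via the regular-epi version of the short five lemma in semi-abelian categories, applied to the diagram defining $g\flat f$. Everything else is formal.
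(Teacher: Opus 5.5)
Your proof is correct and is essentially the paper's. In (2) you pull back along $r$, use (UA) for $(b,c)$ and cancel $r\flat A$; for the converse you transport along $r^{-1}$ and cancel $p\flat A$, exactly as the paper does. In (1) you give the direct argument via $r\circ[b,c]=[b',c']\circ(p+q)$, where the paper simply cites Lemma 2.2 of \cite{CGVdL15}.

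The only soft spot is how you justify the cancellation step, which the paper leaves implicit. The lemma invoked in \autoref{prop:AC_nu} is used there with an identity as right-hand map. The regular short five lemma goes the other way: outer maps regular epic imply the middle map is regular epic. What you need is ``middle and right regular epic imply left regular epic''. That fails for general morphisms of exact sequences, and holds here only because $(r+A,r)$ respects the sections.

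A clean argument is that $K\Sigma$ preserves reflexive coequalizers, since $\Sigma$ is a left adjoint and $K$ preserves them. Now $(1_A,r)$ is the coequalizer in $\C\times\C$ of its kernel pair, which is reflexive. Hence $K\Sigma(1_A,r)=(r\flat A,r)$ is again a coequalizer, so $r\flat A$ (and likewise $p\flat A$) is a regular epimorphism.
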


\begin{proof}
	For part (1), see Lemma 2.2 in \cite{CGVdL15}.
	
	For part (2), let $\xi_1,\xi_2$ be actions of $X'$ on an object $Y$. If $b'^*\xi_1 = b'^*\xi_2$, then we have $b^*r^* \xi_1 = p^*b'^*\xi_1 = p^*b'^*\xi_2 = b^* r^*\xi_2$, and similarly, if $c'^*\xi_1 = c'^*\xi_2$, then $c^*r^*\xi_1 = c^*r^*\xi_2$. If $(b,c)$ satisfies $\mathrm{(UA)}$, then this implies that $r^*\xi_1 = r^*\xi_2$, and since $r$ is a regular epimorphism, $\xi_1=\xi_2$.
	
	Assuming that $r$ is an isomorphism and $(b',c')$ satisfies $\mathrm{(UA)}$, if $\xi_1,\xi_2$ are actions of $X$ on $Y$, then let $\xi_i'=(r^{-1})^*\xi_i$; if $b^*\xi_1 = b^*\xi_2$, then we have
	\[p^*b'^*\xi_1' = b^*r^*\xi_1' = b^* \xi_1 = b^*\xi_2 = b^*r^*\xi_2' = p^*b'^*\xi_2'\]
	and thus $b'^*\xi_1'=b'^* \xi_2'$. Similarly, if $c^*\xi_1=c^*\xi_2$, then $c'^*\xi_1' = c'^* \xi_2'$. Thus $\xi_1'=\xi_2'$, and as a consequence $\xi_1 = r^*\xi_1' = r^*\xi_2' =\xi_2$.
\end{proof}

\begin{proposition}
	The following conditions are equivalent :
	\begin{enumerate}[(1)]
		\item any jointly strongly epimorphic cospan where one leg has normal image satisfies $\mathrm{(UA)}$;
		\item any semi-direct product cospan satisfies $\mathrm{(UA)}$.
	\end{enumerate}
	These properties hold whenever $\C$ is algebraically coherent.
\end{proposition}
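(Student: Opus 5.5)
The plan is to prove (2) $\Rightarrow$ (1) using \autoref{prop:cospan_morph}(2), and (1) $\Rightarrow$ (2) directly. For the latter, a semi-direct product cospan $B\xrightarrow{j_B} B\rtimes C\xleftarrow{s_C} C$ is jointly strongly epimorphic, since $(j_B,s_C)$ is jointly strongly epimorphic for any split short exact sequence by protomodularity. Moreover, $j_B$ is a kernel, so it has normal image, and (1) applies.

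For (2) $\Rightarrow$ (1), let $b\colon B\to X$, $c\colon C\to X$ be jointly strongly epimorphic, with the image $M$ of $b$ normal in $X$. Let $b=m\circ e$ with $e\colon B\to M$ a regular epimorphism and $m\colon M\to X$ normal. Restricting the conjugation action of $X$ on $M$ along $c$ gives an action $c^\ast\chi^X_M$ of $C$ on $M$. By \autoref{prop:sdp_UP}, the pair $(m,c)$ then induces $[m,c\rangle\colon M\rtimes C\to X$. Its image is $M\vee c(C)$, which is all of $X$ because $(b,c)$ is jointly strongly epimorphic and $b$ factors through $m$. Hence $r=[m,c\rangle$ is a regular epimorphism by \autoref{prop:sdp_int}.

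We then have a morphism of cospans from $M\xrightarrow{j_M} M\rtimes C\xleftarrow{s_C} C$ to $(b,c)$. Its components are $1_M$, $r$ and $1_C$, and it lies over the span whose left leg is $m$. This is the wrong direction for \autoref{prop:cospan_morph}, which requires the source cospan to map onto the target with the regular epimorphisms $p,q,r$. The way to fix this is to start instead from the cospan $B\xrightarrow{j_M e} M\rtimes C\xleftarrow{s_C} C$ and use the morphism of cospans $(e,1_{M\rtimes C},1_C)$ towards $(j_M,s_C)$. Both cospans have the same middle object, so the converse half of \autoref{prop:cospan_morph}(2), which needs the middle component to be an isomorphism, is not the relevant one. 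Instead I would argue directly.

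The direct argument runs as follows. Take actions $\xi_1,\xi_2$ of $X$ on $Y$ with $b^\ast\xi_1=b^\ast\xi_2$ and $c^\ast\xi_1=c^\ast\xi_2$. Then $e^\ast m^\ast\xi_1=e^\ast m^\ast\xi_2$. Since $e$ is a regular epimorphism and $e\flat Y$ is a regular epimorphism, we get $m^\ast\xi_1=m^\ast\xi_2$. Hence $r^\ast\xi_i$ agree along $j_M$ and $s_C$, and (2) gives $r^\ast\xi_1=r^\ast\xi_2$. Finally, $r\flat Y$ is a regular epimorphism, so $\xi_1=\xi_2$. This is exactly the first part of the proof of \autoref{prop:cospan_morph}(2), applied to the morphism of cospans $(e',r,1_C)$ from $(j_M,s_C)$ to $(m,c)$ together with the reduction from $b$ to $m$. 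I will cite it in that form.

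For the final claim, it suffices to show (2) in an algebraically coherent $\C$. Given two actions $\xi_1,\xi_2$ of $B\rtimes C$ on $A$ with the same pullbacks along $j_B$ and $s_C$, \autoref{thm:sdp_alg_coh} gives the identities \eqref{eq:compat_act_AC} for both. Hence $\xi_1\circ\nu^{j_B,s_C}_A=\xi^B_A\circ B\flat\xi^C_A=\xi_2\circ\nu^{j_B,s_C}_A$. Since $\nu^{j_B,s_C}_A$ is a regular epimorphism by \autoref{prop:AC_nu}, it follows that $\xi_1=\xi_2$.

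The main obstacle is the surjectivity of $r$, i.e. the identification of the image of $[m,c\rangle$ with $X$, which requires the join computation of \autoref{prop:sdp_int} and the fact that $(m,c)$ inherits joint strong epimorphicity from $(b,c)$.
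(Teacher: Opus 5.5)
Your proof is correct and follows essentially the same route as the paper. You reduce to a leg with normal monomorphic image via image factorization, then apply the first half of \autoref{prop:cospan_morph}(2) to the morphism of cospans whose middle component is the regular epimorphism $[m,c\rangle\colon M\rtimes C\to X$, and for the algebraically coherent case you use \eqref{eq:compat_act_AC} with the regular epimorphism $\nu^{j_B,s_C}_A$.

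There are only a few cosmetic differences. The paper factors both legs and gets the reduction from $b$ to $m$ from the converse half of \autoref{prop:cospan_morph}(2), applied with middle component $1_X$. That component is an isomorphism, contrary to your remark that this half is not the relevant one. You instead do this reduction by hand, which is fine. Also, your $(e',r,1_C)$ should read $(1_M,r,1_C)$.
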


\begin{proof}
	If $\begin{tikzcd}[column sep=scriptsize] B \ar[r,"b"] & X & \ar[l,"c"']C\end{tikzcd}$ is a cospan, by taking the regular factorization $b=m_b e_b$ and $c=m_c e_c$ of $b$ and $c$ we obtain a morphism of cospans
	\[\begin{tikzcd}
		B\ar[r,"b"] \ar[d,"e_b"'] & X\ar[d,equal] & \ar[l,"c"'] \ar[d,"e_c"]C\\
		B'\ar[r,"m_b"'] & X & \ar[l,"m_c"] C'.
	\end{tikzcd}\]
	By \autoref{prop:cospan_morph}, the pair $(m_b,m_c)$ is jointly strongly epimorphic if and only if the pair $(b,c)$ is, and it satisfies $\mathrm{(UA)}$ if and only if $(b,c)$ satisfies it as well.
	
	By \autoref{prop:sdp_int}, if $m_b$ is a normal monomorphism, then we have an action $m_c^\ast\chi^X_{B'}$ of $C'$ on $B'$ and a morphism
	\[\begin{tikzcd}
		B'\ar[r,"j_{B'}"] \ar[d,equal] & B'\rtimes C'\ar[d,dashed,"{[m_b,m_c\rangle}"] & \ar[l,"s_{C'}"'] \ar[d,equal]C'\\
		B'\ar[r,"m_b"'] & X & \ar[l,"m_c"] C',
	\end{tikzcd}\]
	which is a regular epimorphism if and only if $(b,c)$ is jointly strongly epimorphic. By \autoref{prop:cospan_morph}, we obtain that (2) implies (1), and the converse is immediate since every semi-direct product cospan is jointly strongly epimorphic and has one leg which is a normal monomorphism.
	
	Furthermore, if $\xi_1$ and $\xi_2$ are two actions of $B\rtimes C$ on $A$ with the same pullbacks along $j_B$ and $s_C$, then both will make \eqref{eq:compat_act_AC} commute, and since $\nu^{j_B,s_C}_A$ is a regular epimorphism $\xi_1=\xi_2$.
\end{proof}

By \autoref{ex:coproduct}, since $C+B$ can be viewed as a ternary semi-direct product, an action of $C+B$ on $A$ is determined by its restrictions to $K[\xi^C_B]$, $B$ and $C$. Thus we find :
\begin{theorem}\label{thm:sdp_alg_coh_2}
	Let $\C$ be an algebraically coherent semi-abelian category, and consider three objects $A,B,C$ together with :
	\begin{itemize}
		\item actions $\xi^C_B$, $\xi^C_A$ and $\xi^B_A$;
		\item a morphism $\phi\colon K=K[\xi^{C}_{B}]\to A$, which is $B$-equivariant with respect to $\iota_C^\ast\chi^{C+B}_{K}$ and $\xi^B_A$ and $C$-equivariant with respect to $\iota_B^\ast\chi^{C+B}_{K}$ and $\xi^C_A$.
	\end{itemize}
	Then for a morphism $\xi\colon (C+B)\flat A\to A$, the following conditions are equivalent :
	\begin{enumerate}[(1)]
		\item $\xi$ is an action of $C+B$ on $A$, whose pullbacks along $k_{B\rtimes C}$, $\iota_B$ and $\iota_C$ coincide with $\phi^\ast\chi$, $\xi^B_A$ and $\xi^{C}_A$ respectively;
		\item $\xi\circ \eta^{C+B}_A=1$, and $\xi$ is $K[\xi^{C}_{B}]$-equivariant with respect to $k_{B\rtimes C}^\ast\mu^{C+B}_A$ and $\phi^{\ast}\chi_A$, $B$-equivariant with respect to $\iota_B^\ast\mu^{C+B}_A$ and $\xi^B_A$, and $C$-equivariant with respect to $\iota_C^\ast\mu^{C+B}_A$ and $\xi^C_A$;
		\item $\xi$ is $K[\xi^{C}_B]$-equivariant, and the diagram
		\begin{equation}\begin{tikzcd}[column sep=large]
				B\flat C\flat A \ar[r,"\gamma_{B,C,A}"] \ar[d,"B\flat \xi^{C}_A"{name=0,swap}] & (C+B)\flat A \ar[d,"\xi"{name=1,description}] & \ar[l,"\gamma_{C,B,A}" swap] C\flat B \flat A\ar[d,"C\flat \xi^{B}_A"{name=2}] \ar[from=0,to=1,"(i)",phantom] \ar[from=1,to=2,"(ii)",phantom]\\
				B\flat A \ar[r,"\xi^{B}_A"'] & A & C\flat A\ar[l,"\xi^{C}_A"]
			\end{tikzcd}\label{eq:compat_act_AC_coprod_1}\end{equation}
		commutes;
		\item the diagram
		\begin{equation}\begin{tikzcd}
				K[\xi^C_B]\flat B\flat C\flat A \ar[r,"\nu^{k_\xi,\eta}_{C\flat A}"] \ar[d,"{K[\xi^C_B]\flat B\flat \xi^C_A}"{name=0,swap}] & (C\flat B)\flat C\flat A \ar[r,"\nu^{\kappa,\iota}_A"] & (C+B)\flat A \ar[d,"\xi"{name=1,description}] & \ar[l,"\gamma_{C,B,A}" swap] C\flat B \flat A\ar[d,"C\flat \xi^{B}_A"{name=2}]\\
				K[\xi^C_B]\flat B\flat A\ar[r,"{K[\xi^C_B]\flat \xi^{B}_A}"'] & K[\xi^C_B]\flat A \ar[r,"\phi^\ast\chi_A"'] & A & C\flat A\ar[l,"\xi^{C}_A"] \ar[from=0,to=1,"(i)",phantom] \ar[from=1,to=2,"(ii)",phantom]
			\end{tikzcd}\label{eq:compat_act_AC_coprod_2}\end{equation}
		commutes.
	\end{enumerate}
	
	Furthermore, there is at most one morphism $\xi$ satisfying these conditions, and when such $\xi$ exists, the data of $\xi^{C}_B$, $\xi=\xi^{C+B}_A$ and $\phi$ determines a ternary action on $(A,B,C)$.
\end{theorem}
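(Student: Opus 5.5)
The plan is to mirror the proof of \autoref{thm:sdp_alg_coh}, with the cospan $(j_B,s_C)$ replaced by the three maps $k_{B\rtimes C}$, $\iota_B$, $\iota_C$ into $C+B$. These exhibit $C+B$ as a ternary semi-direct product by \autoref{ex:coproduct}. Concretely, $C+B\simeq (C\flat B)\rtimes C$ and $C\flat B\simeq K[\xi^C_B]\rtimes B$, with the semi-direct product cospans $(\kappa_{C,B},\iota_C)$ and $(k_\xi,\eta^C_B)$. Applying \autoref{thm:sdp_alg_coh} twice will reduce everything to these two binary steps.

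\textbf{(1)$\Rightarrow$(2).} This is immediate: an action is equivariant with respect to $\mu^{C+B}_A$ and itself, hence with respect to its pullbacks along any morphism into $C+B$.

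\textbf{(2)$\Rightarrow$(3).} Argue exactly as in \autoref{thm:sdp_alg_coh}. The map $\xi\circ\iota_C\flat A$ is a $C$-equivariant map $(C\flat A,\mu^C_A)\to(A,\xi^C_A)$ whose composite with the unit is $1$, so it equals $\xi^C_A$; similarly $\xi\circ\iota_B\flat A=\xi^B_A$. Since $\gamma_{C,B,A}=\nu^{\iota_C,\iota_B}_A$ is the $C$-equivariant extension of $\iota_B\flat A$, $C$-equivariance of $\xi$ gives square (ii), and symmetrically $B$-equivariance gives (i).

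\textbf{(3)$\Rightarrow$(4).} Here we identify $\nu^{\kappa,\iota}_A\circ\nu^{k_\xi,\eta}_{C\flat A}$ via \autoref{prop:nu_prop} as a composite through $K\flat(C+B)\flat A$. We then use $K$-equivariance of $\xi$ to rewrite $\xi$ on that composite as $\phi^\ast\chi_A\circ K\flat(\xi\circ\gamma_{B,C,A})$, and apply (i).

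\textbf{(4)$\Rightarrow$(1).} This is the core of the proof, done in two stages.
\begin{enumerate}[(a)]
\item First, on $C\flat B\simeq K\rtimes B$, define $\xi'=\xi\circ\kappa_{C,B}\flat A$. Precomposing (4) appropriately with units shows that $\xi'$ satisfies the hypotheses (3) of \autoref{thm:sdp_alg_coh} for the cospan $(k_\xi,\eta^C_B)$, with actions $\phi^\ast\chi_A$ and $\xi^B_A$. Here the $B$-equivariance of $\phi$ (with respect to the pulled-back conjugation on $K$) is what makes $\phi^\ast\chi_A$ compatible with the semi-direct product structure; one has to check this compatibility diagram. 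Hence $\xi'$ is an action of $C\flat B$.
\item Second, apply \autoref{thm:sdp_alg_coh} again to $C+B\simeq(C\flat B)\rtimes C$, with actions $\xi'$ and $\xi^C_A$. The needed diagram \eqref{eq:compat_act_AC} for this cospan splits in two. The half involving $\nu^{\iota_C,\kappa}_A$ follows from (ii) together with $C$-equivariance of $\phi$, since $C\flat B$ is generated via $\nu$ by $K$ and $B$. The other half is exactly square (i) precomposed along the regular epimorphism $\nu^{k_\xi,\eta}_{C\flat A}$ (regular by \autoref{prop:AC_nu}).
\end{enumerate}
The pullbacks along $k_{B\rtimes C}$, $\iota_B$, $\iota_C$ are then read off.

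\textbf{Uniqueness and the ternary action.} Uniqueness follows since $\nu^{\kappa,\iota}_A\circ\nu^{k_\xi,\eta}_{C\flat A}$ and $\gamma_{C,B,A}$ are jointly epimorphic (regular epi via \autoref{prop:AC_nu} plus the coproduct decomposition), so (4) determines $\xi$; equivalently, this is (UA) for semi-direct product cospans. Finally, $(\xi^C_B,\xi,\phi)$ is a ternary action by the preceding proposition on (UA) categories. For this we need (a) $(C+B)$-equivariance of $\phi$, which by the final clause of \autoref{thm:sdp_alg_coh} (applied twice) reduces to $K$-, $B$- and $C$-equivariance, where $K$-equivariance holds because $\phi^\ast\chi_A$ is the conjugation pulled back; and (b) $k^\ast\xi=\phi^\ast\chi_A$, which we already have.

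The main obstacle I expect is the bookkeeping in step (4)$\Rightarrow$(1). Deriving the intermediate compatibility diagrams from the single diagram \eqref{eq:compat_act_AC_coprod_2} via \autoref{prop:nu_prop} and the regular epimorphisms requires exactly the equivariance hypotheses on $\phi$, and this needs care.
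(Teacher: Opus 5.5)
Your architecture matches the paper's: $C+B\simeq (C\flat B)\rtimes C$, $C\flat B\simeq K\rtimes B$, and two applications of \autoref{thm:sdp_alg_coh}. The implications (1)$\Rightarrow$(2)$\Rightarrow$(3)$\Rightarrow$(4) are fine. So is uniqueness, since $\nu^{\kappa,\iota}_A\circ\nu^{k_\xi,\eta}_{C\flat A}$ alone is a regular epimorphism, and so is the ternary-action clause. The gap is in (4)$\Rightarrow$(1), where the ingredients are misallocated.

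\textbf{Stage (a).} Precomposing square (i) of (4) with $K\flat B\flat\eta^C_A$ yields only $\xi'\circ\nu^{k_\xi,\eta}_A=\phi^\ast\chi_A\circ K\flat\xi^B_A$. The other square demanded by \autoref{thm:sdp_alg_coh}(3) is $\xi'\circ\nu^{\eta,k_\xi}_A=\xi^B_A\circ B\flat(\phi^\ast\chi_A)$. It lives on $B\flat K\flat A$ and cannot be reached from (4) by inserting units. In fact the only thing (4) tells you about $\xi'$ is the first square, which determines $\xi'$ because $\nu^{k_\xi,\eta}_A$ is a regular epimorphism. The missing step is the paper's. $B$-equivariance of $\phi$ gives $\psi\colon C\flat B\to A\rtimes B$ with $\psi k_\xi=j_A\phi$ and $\psi\eta^C_B=s_B$, hence an honest action $\psi^\ast\chi^{A\rtimes B}_A$ of $C\flat B$ on $A$. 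This action satisfies the same first square, so $\xi'=\psi^\ast\chi^{A\rtimes B}_A$. Alternatively, as the paper does, use $\psi^\ast\chi^{A\rtimes B}_A$ in place of $\xi'$ throughout.

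\textbf{Stage (b).} This is the more serious problem. The square $\xi\circ\nu^{\iota,\kappa}_A=\xi^C_A\circ C\flat\xi'$ does not follow from square (ii) and $C$-equivariance of $\phi$ alone.
\begin{itemize}
\item After precomposing with the regular epimorphism $C\flat\nu^{k_\xi,\eta}_A$ and using \autoref{prop:nu_prop}, you must control $\xi\circ\nu^{\sigma,\iota_B}_A$ on $(K\rtimes C)\flat B\flat A$, where $\sigma=[k_{B\rtimes C},\iota_C\rangle$.
\item Its precomposite with the regular epimorphism $\nu^{j,s}_{B\flat A}$ is $\xi\circ k_{B\rtimes C}^\ast\mu^{C+B}_A\circ K\flat\gamma_{C,B,A}$.
\item Square (ii) only tells you $\xi\circ\gamma_{C,B,A}$. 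To move $\xi$ past the outer $K\flat$, you need $\xi$ to be $K$-equivariant with respect to $k_{B\rtimes C}^\ast\mu^{C+B}_A$ and $\phi^\ast\chi_A$, and this is not part of (4).
\end{itemize}
That equivariance has to be extracted from square (i). The map $\nu^{\kappa,\iota}_A\circ\nu^{k_\xi,\eta}_{C\flat A}$ is a $K$-equivariant regular epimorphism, and the lower composite of (i) is $K$-equivariant, so $\xi$ is $K$-equivariant. Only then can you proceed as the paper does:
\begin{itemize}
\item compare with the action $(\phi\rtimes C)^\ast\chi^{A\rtimes C}_A$ of $K\rtimes C$, which is where $C$-equivariance of $\phi$ enters;
\item cancel $\nu^{j,s}_{B\flat A}$;
\item close the cube by cancelling $C\flat\nu^{k_\xi,\eta}_A$.
\end{itemize}
So square (i) feeds both halves of the second application of \autoref{thm:sdp_alg_coh}. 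Your split ``(ii) gives one half, (i) the other'' is exactly where the argument breaks as written.
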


\begin{proof}
	We first prove that (1) implies (2). If $\xi$ is an action of $(C+B)$ on $A$, then, by definition, $\xi\circ\eta^{C+B}_A=1$, and $\xi$ is $(C+B)$-equivariant with respect to $\mu^{C+B}_A$ and itself; as a consequence, it is also $K[\xi^{C}_{B}]$-equivariant with respect to $k_{B\rtimes C}^{\ast}\mu^{C+B}_A$ and $k_{B\rtimes C}^\ast\xi=\phi^\ast \chi_A$. It is also $B$-equivariant and $C$-equivariant, for the same reason.
	
	To prove that (2) implies (3), we observe that if $\xi$ is $C$-equivariant, then $\xi\circ (\iota_C\flat A)$ and $\xi^{C}_A\colon C\flat A\to A$ are both $C$-equivariant with respect to $\mu^{C}_A$ and $\xi^{C}_A$, and if moreover
	\[(\xi \circ \iota_{C}\flat A )\circ \eta^{C}_{A}= \xi\circ \eta^{C+B}_A =1 = \xi^{C}_{A}\circ \eta^{C}_{A},\]
	then $\xi\circ\iota_C\flat A=\xi^{C}_A$.
	
	Then, if $\xi$ is $B$-equivariant, the composition $\xi\circ \gamma_{B,C,A}$ is $B$-equivariant with respect to $\mu^{C}_{B\flat A}$ and $\xi^B_A$, thus it is equal to
	\[\xi^{B}_A \circ B\flat (\xi \circ \gamma_{B,C,A} \circ \eta^B_{C\flat A}) = \xi^B_A \circ B\flat (\xi\circ \iota_C\flat A) = \xi^B_A \circ B\flat \xi^{C}_A.\]
	
	The identity $\xi\circ \gamma_{C,B,A} = \xi^C_A \circ C\flat \xi^{B}_A $ can be proved in the same way.
	
	To prove that (3) implies (4), we observe that using \autoref{prop:nu_prop} and the $K$-equivariance of $\xi$ gives a commutative diagram
	\[\begin{tikzcd}[column sep=large]
		{K\flat B\flat C\flat A} & {K\flat (C+B)\flat A} & {K\flat A} \\
		{(C\flat B)\flat C\flat A} & {(C+B)\flat A} & A,
		\arrow["{ K\flat \gamma_{B,C,A}}", from=1-1, to=1-2]
		\arrow["{\nu^{k_\xi,\eta}_{C\flat A}}"', from=1-1, to=2-1]
		\arrow["{K\flat \xi}", from=1-2, to=1-3]
		\arrow["{k_{B\rtimes C}^\ast \mu^{C+B}_A}"', from=1-2, to=2-2]
		\arrow["{\phi^\ast\chi_A}", from=1-3, to=2-3]
		\arrow["{\nu^{\kappa,\iota}_A}"', from=2-1, to=2-2]
		\arrow["\xi"', from=2-2, to=2-3]
	\end{tikzcd}\]
	so that if the square $(i)$ in \eqref{eq:compat_act_AC_coprod_1} commutes, the square $(i)$ in \eqref{eq:compat_act_AC_coprod_2} commutes as well.
	
	To prove that (4) implies (1), we first observe that since $\phi\colon K\to A$ is $B$-equivariant, we have an induced morphism $\psi\colon C\flat B\to A\rtimes B$, and since $A$ is normal in $A\rtimes B$, we have an action $\psi^\ast\chi^{A\rtimes B}_A$ of $C\flat B$ on $A$, which is such that $k_\xi^\ast \psi^\ast \chi^{A\rtimes B}_A = \phi^\ast j_A^\ast \chi^{A\rtimes B}_A = \phi^\ast \chi_A$ and $(\eta^{C}_{B})^\ast \psi^\ast \chi^{A\rtimes B}_A = s_{B}^\ast \chi^{A\rtimes B}_A = \xi^B_A$. By \autoref{thm:sdp_alg_coh}, the diagrams
	\begin{equation}\begin{tikzcd}
			K\flat B\flat A \ar[r,"\nu^{k_\xi,\eta}_A"] \ar[d,"K\flat \xi^{B}_A" swap] & (C\flat B)\flat A \ar[d,"\psi^\ast\chi^{A\rtimes B}_A" description] & \ar[l,"\nu^{\eta,k_\xi}_A" swap] B\flat K \flat A\ar[d,"B\flat (\phi^\ast\chi_A)"] \\
			K\flat A \ar[r,"\phi^\ast\chi_A"'] & A & B\flat A\ar[l,"\xi^{B}_A"]
		\end{tikzcd}\label{eq:compat_act_KB}\end{equation}
	commute.
	
	Similarly, the action $\iota_C^\ast\chi^{C+B}_K$ of $C$ on $K$ induces a semi-direct product $K\rtimes C$, with a canonical morphism $\sigma = [k_{B\rtimes C}, \iota_C\rangle \colon K\rtimes C\to C+B$. Since $\phi$ is also $C$-equivariant with respect to $\chi^{C+B}_K$ and $\xi^{C}_A$, there is, as above, a morphism $\phi\rtimes C\colon K\rtimes C\to A\rtimes C$, and an action $(\phi\rtimes C)^\ast\chi^{A\rtimes C}_A$ of $K\rtimes C$ on $A$, which is such that $j_K^\ast (\phi\rtimes C)^\ast \chi^{A\rtimes C}_A = \phi^\ast j_A^\ast \chi^{A\rtimes C}_A = \phi^\ast \chi_A$ and $(s_C)^\ast \psi^\ast \chi^{A\rtimes C}_A = s_{C}^\ast \chi^{A\rtimes C}_A = \xi^C_A$,
	so that by \autoref{thm:sdp_alg_coh}, the diagrams
	\begin{equation}\begin{tikzcd}
			K\flat C\flat A \ar[r,"\nu^{j,s}_A"] \ar[d,"K\flat \xi^{C}_A" swap] & (K\rtimes C)\flat A \ar[d,"(\phi\rtimes C)^\ast\chi^{A\rtimes C}_A" description] & \ar[l,"\nu^{s,j}_A" swap] C\flat K \flat A\ar[d,"C\flat (\phi^\ast\chi_A)"] \\
			K\flat A \ar[r,"\phi^\ast\chi_A"'] & A & C\flat A\ar[l,"\xi^{C}_A"]
		\end{tikzcd}\label{eq:compat_act_KC}\end{equation}
	commute.
	
	If we view $C+B$ as a semi-direct product of $C\flat B$ and $C$, then by \autoref{thm:sdp_alg_coh}, in order to prove that $\xi$ is an action of $C+B$ on $A$ and that $\kappa_{C,B}^\ast \xi=\psi^{\ast}\chi^{A\rtimes B}_B$ and $\iota_C^\ast \xi = \xi^{C}_A$, it suffices to prove that the diagrams 
	\begin{equation}\begin{tikzcd}
			(C\flat B)\flat C\flat A \ar[r,"\nu^{\kappa,\iota}_A"] \ar[d,"(C\flat B)\flat \xi^{C}_A"{swap,name=0}] & (C+B)\flat A \ar[d,"\xi"{name=1, description}] & \ar[l,"\nu^{\iota,\kappa}_A" swap] C\flat (C\flat B) \flat A\ar[d,"C\flat (\psi^\ast\chi^{A\rtimes B}_A)"{name=2}]  \\
			(C\flat B)\flat A \ar[r,"\psi^\ast\chi^{A\rtimes B}_A"'] & A & C\flat A\ar[l,"\xi^{C}_A"]\ar[from=0,to=1,"(i)",phantom] \ar[from=1,to=2,"(ii)",phantom]
		\end{tikzcd}\label{eq:compat_act_coprod_3}\end{equation}
	commute. Decomposing the commutative rectangle $(i)$ in \eqref{eq:compat_act_AC_coprod_2} as
	\[\begin{tikzcd}
		K\flat B\flat C\flat A \ar[r,"\nu^{k_\xi,\eta}_{C\flat A}"] \ar[d,"{K\flat B\flat \xi^C_A}"'] & (C\flat B)\flat C\flat A \ar[r,"\nu^{\kappa,\iota}_A"] \ar[d,"(C\flat B)\flat \xi^{C}_A" description]& (C+B)\flat A \ar[d,"\xi"] \\
		K\flat B\flat A\ar[dr,"{K\flat \xi^{B}_A}"'] \ar[r,"\nu^{k_\xi,\eta}_{A}"] & (C\flat B)\flat A \ar[r,"\psi^\ast\chi^{A\rtimes B}_A"] & A,\\
		& K\flat A \ar[ur,"\phi^\ast\chi_A"'] & 
	\end{tikzcd}\]
	where the left square commutes since $\nu^{k_\xi,\eta}$ is natural and the bottom triangle commutes by \eqref{eq:compat_act_KB}, we find that the square $(i)$ in \eqref{eq:compat_act_coprod_3} commutes, since $\nu^{k_\xi,\eta}_{C \flat A}$ is a regular epimorphism.
	
	The commutativity of the rectangle $(i)$ in \eqref{eq:compat_act_AC_coprod_2} implies that $\xi \circ \nu^{\kappa,\iota}_A\circ \nu^{k_\xi,\eta}_{C\flat A}$ is $K$-equivariant with respect to $\mu^{K}_{B\flat C\flat A}$ and $\phi^{\ast}\chi_A$. Since $\nu^{\kappa,\iota}_A\circ \nu^{k_\xi,\eta}_{C\flat A}$ is a regular epimorphism and is also $K$-equivariant with respect to $\mu^{K}_{B\flat C\flat A}$ and $k_{B\rtimes C}^\ast\mu^{C+B}_A$, $\xi$ is also $K$-equivariant with respect to $k_{B\rtimes C}^\ast\mu^{C+B}_A$ and $\phi^\ast\chi_A$. As in the proof that $(3)\Rightarrow (4)$, the commutativity of $(ii)$ in \eqref{eq:compat_act_AC_coprod_1} implies that the two outer compositions in the diagram
	\[\begin{tikzcd}
		K\flat C\flat B\flat A \ar[r,"\nu^{j,s}_{B\flat A}"] \ar[d,"{K\flat C\flat \xi^{B}_A}"'] & (K\rtimes C)\flat B\flat A \ar[r,"\nu^{\sigma,\iota_B}_A"] \ar[d,"(K\rtimes C)\flat \xi^{C}_A" description]& (C+B)\flat A \ar[d,"\xi"] \\
		K\flat C\flat A\ar[dr,"{K\flat \xi^{C}_A}"'] \ar[r,"\nu^{j,s}_{A}"] & (K\rtimes C)\flat A \ar[r,"(\phi\rtimes C)^\ast\chi^{A\rtimes C}_A"] & A,\\
		& K\flat A \ar[ur,"\phi^\ast\chi_A"'] & 
	\end{tikzcd}\]
	are equal. Since, as above, the bottom triangle and left-hand square commute, and $\nu^{j,s}_{B\flat A}$ is a regular epimorphism, the right-hand square commute as well. As a consequence, all faces other than the front one in the cube
	\[\begin{tikzcd}[column sep={5.6em,between origins},row sep={4.2em,between origins}]
		C\flat K\flat B\flat A \ar[rr,"\nu^{s,j}_{B\flat A}"] \ar[dd,"{C\flat K\flat \xi^{B}_A}"'] \ar[dr,"C\flat \nu^{k_\xi,\eta}_A"{description}] & & (K\rtimes C)\flat B\flat A \ar[dr,"\nu^{\sigma,\iota_B}_A"{description}] \ar[dd,"(K\rtimes C)\flat \xi^{B}_A"{description,near end}] & \\
		& C\flat (C\flat B)\flat A \ar[rr,"\nu^{\iota,\kappa}_{A}",crossing over,near start]& & (C+B)\flat A \ar[dd,"\xi"]\\
		C\flat K\flat A \ar[rr,"\nu^{s,j}_{A}",near end] \ar[dr,"{C\flat (\phi^\ast\chi_A)}"{description}] & & (K\rtimes C)\flat A \ar[dr,"(\phi\rtimes C)^\ast\chi^{A\rtimes C}_A"{description,pos=0.4}] & \\
		& C\flat A \ar[from=uu,crossing over,"C\flat(\psi^\ast\chi^{A\rtimes B}_A)"{swap,pos=0.2}]\ar[rr,"\xi^{C}_{A}"'] & & A
	\end{tikzcd}\]
	commute, and since $C\flat \nu^{k_\xi,\eta}_A$ is a regular epimorphism, the front face, which is precisely the square $(ii)$ in \eqref{eq:compat_act_coprod_3}, commute as well.
\end{proof}

\section{Locally algebraically cartesian closed categories}

In the previous sections, we gave several conditions to ensure that, in an algebraically coherent semi-abelian category, a given morphism with from $(B\rtimes C)\flat A$ or $(C+B)\flat A$ to $A$ is indeed an action defining a ternary semi-direct product. We will now prove that, under a stronger hypothesis on the underlying category, those conditions actually determine the required action.

\begin{definition}[\cite{BG12}]
	A finitely complete category $\C$ is called \emph{locally algebraically cartesian closed} (or LACC) if for each morphism $f\colon B \to C$ of $\C$, the change-of-base functor $f^*\colon\Pt_C(\C)\to \Pt_B(\C)$ has a right-adjoint.
\end{definition}

A semi-abelian category $\C$ is LACC if and only if the functor $K_C\colon \Pt_C(\C)\to \C$ has a right adjoint for all $\C$ \cite{Gra11}. In particular, the monad of actions then preserves finite colimits.

As in \autoref{prop:AC_nu}, the preservation of coproducts by the monad of $C$-actions can be linked to a property of the morphisms $\nu^{j,s}_A\colon B\flat C\flat A\to (B\rtimes C)\flat A$.

\begin{proposition}\label{prop:LACC_nu}
	Let $C$ be an object of $\C$. The following conditions are equivalent :
	\begin{enumerate}[(1)]
		\item the kernel functor $K_C\colon \Pt_C(\C)\to \C$ preserves binary coproducts;
		\item the monad of $C$-actions preserves binary coproducts;
		\item the morphism $\nu^{\kappa,\iota}_A\colon (C\flat B)\flat (C\flat A) \to (C+B)\flat A$ is an isomorphism, for every $B$ in $\C$;
		\item the morphism $\nu^{j,s}_A\colon B\flat C\flat A \to (B\rtimes C)\flat A$ is an isomorphism, for every $(B,\xi^{C}_B)$ in $\Act_C\C$.
	\end{enumerate}
\end{proposition}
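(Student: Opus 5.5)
The plan follows the structure of the proof of \autoref{prop:AC_nu}, replacing ``regular epimorphism'' by ``isomorphism'' throughout.

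\textbf{(1)$\Leftrightarrow$(2).} Since $K_C$ is monadic with monad $C\flat\_$ and the free functor $\Sigma_C$ preserves coproducts, I will use the equivalence $\Pt_C(\C)\simeq\Act_C(\C)$. The coproduct of two free points $\Sigma_C B=(C+B,[1,0],\iota_C)$ and $\Sigma_C A$ in $\Pt_C(\C)$ is $\Sigma_C(B+A)=(C+B+A,[1,0,0],\iota_C)$. Applying $K_C$ gives the comparison map
\[[C\flat\iota_B,C\flat\iota_A]\colon C\flat B+C\flat A\to C\flat(B+A).\]
Thus (1) implies (2). For the converse, I will write an arbitrary point as a reflexive coequalizer of free points, which is possible by monadicity. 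Since $K_C$ preserves reflexive coequalizers and coproducts commute with reflexive coequalizers, preservation of coproducts of free points extends to all points. So (2) is equivalent to the statement that the map $[C\flat\iota_B,C\flat\iota_A]$ is an isomorphism for all $A,B$, and in turn to (1).

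\textbf{(2)$\Leftrightarrow$(3).} I will use diagram \eqref{eq:comp_ses_algcoh}. Its top two rows are short exact sequences (the top one split), and they share the right-hand term $C\flat B$ with the identity as comparison. The top two rows form a morphism of short exact sequences whose vertical maps are $\nu^{\kappa,\iota}_A$, $[C\flat\iota_B,C\flat\iota_A]$ and $1_{C\flat B}$. The left-hand square is then a pullback, since the kernels coincide. By the short five lemma (protomodularity), $\nu^{\kappa,\iota}_A$ is an isomorphism if and only if the middle map is an isomorphism. Both directions follow: an isomorphism in the middle restricts to an isomorphism on kernels, and conversely. This gives (2)$\Leftrightarrow$(3) for a fixed $C$ and all $A,B$.

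\textbf{(3)$\Leftrightarrow$(4).} (4)$\Rightarrow$(3) is the special case $(C\flat B,\mu^C_B)$, for which $C\flat B\rtimes C\simeq C+B$, $j=\kappa_{C,B}$ and $s=\iota_C$. For (3)$\Rightarrow$(4) I expect the main difficulty, because the argument in \autoref{prop:AC_nu} only transports regular epimorphisms through the identity
\[\nu^{j,s}_A\circ(\xi\flat C\flat A)=(q\flat A)\circ\nu^{\kappa,\iota}_A.\]
My plan is to present $\xi^C_B$ as a reflexive coequalizer in $\Act_C(\C)$ of free $C$-actions: the canonical presentation $C\flat C\flat B\rightrightarrows C\flat B\to B$, given by $\mu^C_B$ and $C\flat\xi$. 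Under the equivalence with points, this is a reflexive coequalizer $C+C\flat B\rightrightarrows C+B\to B\rtimes C$ in $\Pt_C(\C)$. I will then check that both $B\mapsto B\flat C\flat A$ (as a functor of $(B,\xi)$) and $(B,\xi)\mapsto(B\rtimes C)\flat A$ preserve these reflexive coequalizers. The first preserves them because $\_\flat X$ preserves reflexive coequalizers, being a composite of the monadic $K$ with the left adjoint $\Sigma$ in the first variable. For the second, the semi-direct product functor $\Act_C(\C)\to\C$ preserves reflexive coequalizers (it corresponds to the forgetful functor $\Pt_C(\C)\to\C$), and $\_\flat A$ preserves them too. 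The maps $\nu^{j,s}_A$ are natural in $(B,\xi)$ by \autoref{prop:nu_prop}(1) and are isomorphisms on free actions by (3). Hence they are isomorphisms on the coequalizer. The delicate point is verifying naturality of $\nu$ along morphisms of $C$-actions and the preservation of reflexive coequalizers by $\_\flat A$ in the first variable. If the latter fails, I would fall back on proving (4) directly by a short-five-lemma argument analogous to the one for (2)$\Leftrightarrow$(3), comparing the split sequence $B\flat C\flat A\to B+C\flat A\to B$ with $(B\rtimes C)\flat A\to\cdots$.
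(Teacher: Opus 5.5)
Your proposal is correct and follows the paper's proof. The paper cites Gray for (1)$\Leftrightarrow$(2), where you give the standard direct argument; otherwise it proceeds as you do: (2)$\Leftrightarrow$(3) via diagram \eqref{eq:comp_ses_algcoh} and the split short five lemma, (4)$\Rightarrow$(3) by specialising to the free action $(C\flat B,\mu^C_B)$, and (3)$\Rightarrow$(4) by applying $\_\flat C\flat A$ and $(\_\rtimes C)\flat A$ to the canonical reflexive-coequalizer presentation of $(B,\xi^C_B)$. Both points you flag as delicate hold: naturality of $\nu^{j,s}_A$ in $(B,\xi)$ is \autoref{prop:nu_prop}(1) applied to the cospan morphism $(f,f\rtimes C,1_C)$, and $\_\flat A$ preserves reflexive coequalizers because $K$ does and $\Sigma$ is a left adjoint, so the fallback is not needed.
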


\begin{proof}
	The equivalence $(1)\Leftrightarrow (2)$ is a consequence of the monadicity of $K_C$ (see Proposition 6.1 in \cite{Gra11}), and the proof that $(2)\Leftrightarrow (3)\Leftarrow (4)$ is essentially identical to the proof of \autoref{prop:AC_nu}.
	
	To see that $(3)\Rightarrow (4)$, we observe that every action $(B,\xi^{C}_B)$ is a reflexive coequalizer in $\Act_\C(\C)$, so that we have a reflexive coequalizer of cospans
	\[\begin{tikzcd}
		C\flat C\flat B \ar[r,shift left,"\mu^{C}_B"] \ar[r,shift right,"{C\flat \xi^{C}_B}"'] \ar[d,"\kappa_{C,C\flat B}"'] &[1.5em] C\flat B \ar[d,"\kappa_{C,B}"] \ar[r,"\xi^{C}_B"] & B \ar[d,"j_B"] \\
		C+C\flat B \ar[r,shift left,"{[\iota_C,\kappa_{C,B}]}"] \ar[r,shift right,"{C+\xi^{C}_B}"']& C+B \ar[r,"q"] & B\rtimes C\\
		C \ar[u,"\iota_C"] \ar[r,equal] & C \ar[u,"\iota_C"'] \ar[r,equal] & C \ar[u,"s_C"'] 
	\end{tikzcd}\]
	which yields a commutative diagram
	\[\begin{tikzcd}
		(C\flat C\flat B) \flat (C\flat A) \ar[d,"\nu^{\kappa_,\iota}_A"'] \ar[r,shift left,"\mu^{C}_B\flat C\flat A"] \ar[r,shift right,"{(C\flat \xi^{C}_B)\flat C\flat A}"'] &[2.5em] (C\flat B) \flat (C\flat A) \ar[r,"\xi^{C}_B\flat C\flat A"] \ar[d,"\nu^{\kappa,\iota}_A"] & B\flat C\flat A \ar[d,"\nu^{j,s}_A"]\\
		(C+C\flat B)\flat A \ar[r,shift left,"{[\iota_C,\kappa_{C,B}]\flat A}"] \ar[r,shift right,"{(C+\xi^{C}_B)\flat A}"'] & (C+B)\flat A \ar[r,"q\flat A"']& (B\rtimes C)\flat A
	\end{tikzcd}\]
	where each rows is a reflexive coequalizer (since the monad of actions preserves such coequalizers). In particular, if the first two vertical morphisms are isomorphisms, then $\nu^{j,s}_A$ is an isomorphism as well.
\end{proof}

If a monad $(T,\eta^T,\mu^T)$ on a semi-abelian category $\C$ preserves finite colimits, then the functor $U^T\colon \C^T\to \C$ preserves and reflects (finite) limits and colimits, and isomorphisms, so that the category $\C^T$ is semi-abelian. Furthermore, the monad of actions in $\C^T$ is a lift of the monad of  actions in $\C$: given two $T$-algebras $(B,\xi^T_B)$ and $(A,\xi^T_A)$, the objects
$B+A$ and $B\flat A$ can be endowed with actions $\xi^T_{B+A}$ and $\xi^T_{B\flat A}$, defined by the commutative diagram
\begin{equation}\begin{tikzcd}[column sep=large]
		T(B\flat A) \ar[r,"T(\kappa_{B, A})"]\ar[d,"\kappa^T_{B,A}"] \ar[dd,"\xi^T_{B\flat A}"',bend right=60] &[1em] T(B+A) \ar[dd,"\xi^T_{B+A}"{swap,pos=0.3},bend right=60] \ar[r,"{T([1,0])}"] \ar[d,"{[T(\iota_B),T(\iota_A)]}^{-1}"] & T(B)\ar[d,equal] \\
		T(B)\flat T(A) \ar[r,"\kappa_{T(B),T(A)}"'] \ar[d,"\xi^T_B \flat \xi^T_A"] & T(B) + T(A) \ar[d,"\xi^T_B + \xi^T_A"] \ar[r,"{[1,0]}"'] & T(B) \ar[d,"\xi^T_{B}"]\\
		B\flat A \ar[r,"\kappa_{B,A}"'] & B+A \ar[r,"{[1,0]}"']& B
	\end{tikzcd}\label{eq:coprod_flat_T}\end{equation}
so that $(B,\xi^T_B)+ (A,\xi^T_A)=(B+ A,\xi^{T}_{B+ A})$ and $(B,\xi^T_B)\flat (A,\xi^T_A)=(B\flat A,\xi^{T}_{B\flat A})$. Note that we are not assuming that $T$ preserves kernels, thus $\kappa_{B,A}^T$ is not necessarily an isomorphism.

The natural transformations $\eta^B_A \colon A\to B\flat A$ and $\mu^B_A\colon B\flat B\flat A\to B\flat A$ are then also morphisms in $\C^T$. Thus we have for each $T$-algebra $(B,\xi^T_B)$ a commutative diagram of functors
\[\begin{tikzcd}[column sep=huge]
	\C^T \ar[d,"U^T"'] \ar[r,"{(B,\xi^T_B)\flat \_}"] & \C^T \ar[d,"U^T"] \\
	\C \ar[r,"B\flat\_"'] & \C,
\end{tikzcd}\]
and $U^T(\mu^{(B,\xi^T_B)}) = \mu^B$ and $U^T(\eta^{(B,\xi^T_B)}) = \eta^B$. This implies the existence of a distributive law $\lambda_A \colon T(B\flat A) \to B\flat T(A)$ \cite{Bec69}, which is given by the composition
\[\begin{tikzcd}[column sep=large]
	T(B\flat A) \ar[r,"T(B\flat \eta^T_A)"]& T(B\flat T(A)) \ar[r,"\xi^T_{B\flat T(A)}"] & B\flat T(A);
\end{tikzcd}\]
since $\xi^T_{B\flat T(A)} = \xi^T_B\flat \mu^T_A \circ \kappa^T_{B\flat A}$, we can also define $\lambda_A$ as the composition
\[\begin{tikzcd}
	T(B\flat A) \ar[r,"\kappa^T_{B,A}"] &  T(B)\flat T(A) \ar[r,"\xi^{T}_B\flat T(A)"] & B\flat T(A).
\end{tikzcd}\]

As a consequence, the composition of $T$ with the monad of $B$-actions defines a monad $A\mapsto B\flat T(A)$ on $\C$, whose category of algebras is equivalent to $\Act_{(B,\xi^T_B)}(\C^T)$. Such algebras are equivalently defined by a $T$-algebra $\xi^T_A$ and a $B$-action $\xi^B_A$ on $A$, satisfying the following equivalent conditions :
\begin{enumerate}
	\item the diagram \[\begin{tikzcd}[column sep=small]
		T (B\flat A) \ar[rr,"\lambda_A"] \ar[d,"T(\xi^B_A)"'] & & B\flat T(A) \ar[d,"B\flat \xi^T_A"]\\
		T(A) \ar[dr,"\xi^T_A"'] & & B \flat A  \ar[dl,"\xi^B_A"]\\
		& A &
	\end{tikzcd}\]
	commutes
	\item $\xi^B_A$ is a morphism of $T$-algebras $(B\flat A, \xi^T_{B\flat A})\to (A,\xi^T_A)$.
\end{enumerate}

When $T$ is the monad of $C$-actions, the upper left square in \eqref{eq:coprod_flat_T} defining $\kappa^{C\flat\_}_{B\flat A}$ gives us
\[\begin{tikzcd}
	{C\flat(B\flat A)} & &[1em] \\
	& {(C+B)\flat A} & {C\flat(B+A)} \\
	& {(C\flat B)\flat (C\flat A)} & {C\flat B + C\flat A};
	\arrow[from=1-1, to=2-2,"\gamma_{C,B,A}"]
	\arrow["{C\flat\kappa_{B,A}}", from=1-1, to=2-3,bend left=15]
	\arrow["{\kappa^{C\flat \_}_{B,A}}"', from=1-1, to=3-2,bend right=15]
	\arrow[from=2-2, to=2-3,"\overline{\kappa}_{C,B,A}"]
	\arrow[from=2-2, to=3-2,"(\nu^{\kappa,\iota}_A)^{-1}"]
	\arrow["{[C\flat\iota_B,C\flat\iota_A]^{-1}}", from=2-3, to=3-3]
	\arrow["{\kappa_{C\flat B,C\flat A}}"', from=3-2, to=3-3]
\end{tikzcd}\]
As a consequence, the distributive law $\lambda_A\colon C\flat B\flat A\to B\flat C\flat A$ can be defined as
\begin{align*}
	\lambda_A & = \xi^{C}_B \flat C\flat A \circ \kappa^{C\flat \_}_{B,A} = \xi^{C}_B \flat C\flat A \circ (\nu^{\kappa,\iota}_A)^{-1}\circ \gamma_{C,B,A}\\
	& = (\nu^{j_B,s_C}_A)^{-1}\circ q\flat A\circ  \gamma_{C,B,A} \\
	& = (\nu^{j_B,s_C}_A)^{-1}\circ \nu^{s_C,j_B}_A.
\end{align*}
Thus we have :
\begin{theorem}
	Let $\C$ be a LACC semi-abelian category. For each action $(B,C,\xi^{C}_B)$, the composition $\lambda_A = (\nu^{j_B,s_C}_A)^{-1}\circ \nu^{s_C,j_B}_A\colon C\flat B\flat A\to B\flat C\flat A$ is a distributive law, and $\nu^{j_B,s_C}_A\colon B\flat C\flat A\to (B\rtimes C)\flat A$ is an isomorphism of monads. In particular, for every object $A$ there is a bijection between actions of $B\rtimes C$ on $A$ and pairs of actions $(\xi^B_A,\xi^{C}_A)$ such that the diagram
	\[\begin{tikzcd}[column sep=large]
		C\flat B \flat A \ar[r,"\nu^{s_C,j_B}_A" ] \ar[d,"C\flat \xi^{B}_A"swap] & (B\rtimes C)\flat A  \ar[r,"(\nu^{j_B,s_C}_A)^{-1}"]&  B\flat C\flat A \ar[d,"B\flat \xi^{C}_A"]  \\
		C\flat A \ar[r,"\xi^{C}_A"'] & A & B\flat A\ar[l,"\xi^{B}_A"]
	\end{tikzcd}\]
	commutes.
\end{theorem}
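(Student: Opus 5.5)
The plan is to regard $(B,\xi^C_B)$ as an algebra for the monad $T=C\flat\_$ and to use the construction just described. In a LACC semi-abelian category the functor $K_C$ has a right adjoint, so $T$ preserves finite colimits. The discussion preceding the statement then gives us three things: $\C^T=\Act_C(\C)$ is semi-abelian, the monad $(B,\xi^C_B)\flat\_$ on $\C^T$ lifts $B\flat\_$, and therefore, by Beck's theorem, the composite $\lambda_A=\xi^T_{B\flat T(A)}\circ T(B\flat\eta^T_A)$ is a distributive law of $T$ over $B\flat\_$. We have already computed $\lambda_A=(\nu^{j_B,s_C}_A)^{-1}\circ\nu^{s_C,j_B}_A$, and this uses that $\nu^{\kappa,\iota}_A$ and $\nu^{j_B,s_C}_A$ are isomorphisms by \autoref{prop:LACC_nu}. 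So the first assertion needs nothing beyond recording this identification.

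Next I would show that $\nu=\nu^{j_B,s_C}$ is an isomorphism of monads from the composite monad $(B\flat C\flat\_,\ \eta^B\eta^C,\ (\mu^B\circ B\flat\mu^C)\circ B\flat\lambda\, C\flat)$ to $((B\rtimes C)\flat\_,\eta^{B\rtimes C},\mu^{B\rtimes C})$. Each component is invertible by \autoref{prop:LACC_nu}, so it suffices to check that $\nu$ is a monad morphism. For the unit this is immediate from the description of $\nu^{c,b}$ as the $C$-equivariant map extending $b\flat A$: we get $\nu\circ B\flat\eta^C_A\circ\eta^B_A=j_B\flat A\circ\eta^B_A=\eta^{B\rtimes C}_A$. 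For the multiplication I would use \autoref{prop:nu_prop}, in the same way as the associativity diagrams in the proof of \autoref{thm:sdp_alg_coh}. The first of those diagrams rewrites $\mu^{B\rtimes C}\circ(B\rtimes C)\flat\nu\circ\nu_{B\flat C\flat A}$ as $\nu^{1,s}\circ\nu^{j,1}_{C\flat A}\circ B\flat\nu^{s,j}_{C\flat A}$. Substituting $\nu^{s,j}=\nu\circ\lambda$ and using $\nu^{j,1}_{C\flat A}\circ B\flat\nu^{j,s}_{C\flat A}$ (which \autoref{prop:nu_prop} relates to $\nu_{C\flat A}\circ\mu^B_{C\flat C\flat A}$), followed by $\nu^{1,s}\circ\nu_{C\flat A}=\nu\circ B\flat\mu^C$, should give exactly $\nu$ composed with the multiplication of the composite monad. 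This bookkeeping is the step I expect to be the main obstacle. The fallback is to use the fact that both multiplications are determined as the unique maps that are $B$-equivariant and restrict correctly along the units, which is the same uniqueness trick used in the proof of \autoref{prop:nu_prop}.

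Finally, an isomorphism of monads induces an isomorphism of the categories of algebras over $\C$. So actions $\xi$ of $B\rtimes C$ on $A$ correspond bijectively to algebras of the composite monad, via $\xi\mapsto\xi\circ\nu$. By Beck's theory, algebras for the composite monad $B\flat T$ are exactly pairs of a $T$-algebra $\xi^C_A$ and a $B$-algebra $\xi^B_A$ satisfying the compatibility $\xi^B_A\circ B\flat\xi^C_A\circ\lambda_A=\xi^C_A\circ C\flat\xi^B_A$. After substituting the formula for $\lambda_A$, this compatibility is precisely the displayed diagram. To identify the correspondence concretely I would check, via \autoref{thm:sdp_alg_coh}, that the pair attached to $\xi$ is $(j_B^\ast\xi,s_C^\ast\xi)$, using $\xi\circ\nu=\xi^B_A\circ B\flat\xi^C_A$ from \eqref{eq:compat_act_AC}. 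This confirms that the bijection is the expected one.
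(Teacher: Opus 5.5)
Your proposal is correct. The first assertion is handled as in the paper: Beck's lifting theorem is applied to $(B,\xi^C_B)\flat\_$ on $\Act_C(\C)$, and the computation of $\lambda_A$ preceding the statement does the rest.

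The difference lies in how you obtain the monad isomorphism and the bijection. The paper never checks the monad-morphism axioms for $\nu^{j_B,s_C}$. It identifies algebras of the composite monad $B\flat C\flat\_$ with $\Act_{(B,\xi^C_B)}(\Act_C(\C))$. It then implicitly combines this with the equivalence $\Psi_\xi$ of \autoref{prop:equivalence_act_3}, which commutes with the forgetful functors to $\C$. The bijection of actions, and tacitly the monad isomorphism, are read off from that equivalence over $\C$.

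You go the other way: you verify directly that $\nu^{j_B,s_C}$ preserves units and multiplications. You then get the bijection from the induced isomorphism of Eilenberg--Moore categories together with Beck's description of algebras for a composite monad.

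The step you flagged as the main obstacle closes up without your fallback. The diagrams in the proof of \autoref{thm:sdp_alg_coh} give
$\mu^{B\rtimes C}_A\circ(B\rtimes C)\flat\nu^{j,s}_A\circ\nu^{j,s}_{B\flat C\flat A}=\nu^{1,s}_A\circ\nu^{j,1}_{C\flat A}\circ B\flat\nu^{s,j}_{C\flat A}$
and
$\nu^{1,s}_A\circ\nu^{j,1}_{C\flat A}\circ B\flat\nu^{j,s}_{C\flat A}=\nu^{j,s}_A\circ B\flat\mu^{C}_A\circ\mu^{B}_{C\flat C\flat A}$.
Substituting $\nu^{s,j}_{C\flat A}=\nu^{j,s}_{C\flat A}\circ\lambda_{C\flat A}$ yields $\nu^{j,s}_A\circ B\flat\mu^C_A\circ\mu^B_{C\flat C\flat A}\circ B\flat\lambda_{C\flat A}$. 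This is exactly $\nu^{j,s}_A$ composed with the multiplication of the composite monad. Your displayed formula for that multiplication should be written in this order, since as written it does not typecheck.

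What your route buys is an explicit proof that it is this particular map $\nu^{j_B,s_C}$ which is a monad isomorphism, a point the paper leaves implicit. It also uses only \autoref{prop:nu_prop} and \autoref{prop:LACC_nu}, not \autoref{prop:equivalence_act_3}. The paper's route is shorter and more conceptual: it shows the bijection is just a translation of the fact that split epimorphisms compose (\autoref{prop:equivalence_pt_3}). There, the LACC hypothesis is needed only to make Beck's theory available and to express the compatibility condition through $\lambda$.
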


By \autoref{prop:LACC_nu} and \autoref{thm:sdp_alg_coh_2}, we also get :
\begin{theorem}
	Let $\C$ be a LACC semi-abelian category, and consider three actions $(B,C,\xi^C_B)$, $(A,C,\xi^C_A)$ and $(A,B,\xi^B_A)$, and a morphism $\phi\colon K[\xi^{C}_{B}]\to A$, which is $B$-equivariant with respect to $\iota_C^\ast\chi^{C+B}_{K}$ and $\xi^B_A$ and $C$-equivariant with respect to $\iota_B^\ast\chi^{C+B}_{K}$ and $\xi^C_A$.
	
	Let us define $\xi^{C+B}_A\colon (C+B)\flat A\to A$ as the composition
	\[\begin{tikzcd}
		(C+B)\flat A \ar[r,"(\nu^{\kappa,\iota}_A)^{-1}"] & (C\flat B)\flat C\flat A \ar[r,"(\nu^{k_\xi,\eta}_{C\flat A})^{-1}"] &[1em] K\flat B\flat C\flat A  \ar[r,"{K\flat B\flat \xi^C_A}"] &[1em] K\flat B\flat A\ar[r,"{K\flat \xi^{B}_A}"] & K\flat A \ar[r,"\phi^\ast\chi_A"] & A.
	\end{tikzcd}\]
	Then $\xi^{C+B}_A$ is an action of $(C+B)$ on $A$ if and only if the diagram
	\[\begin{tikzcd}[column sep=large]
		C\flat B \flat A \ar[r,"\gamma_{C,B,A}"] \ar[d,"C\flat \xi^{B}_A"'] & (C+B)\flat A \ar[d,"\xi^{C+B}_A"]\\
		C\flat A\ar[r,"\xi^{C}_A"'] & A
	\end{tikzcd}\]
	commutes; furthermore, the data of $\xi^{B}_A$, $\xi^{C}_A$, $\xi^{C}_B$ and $\phi$ then determines a ternary semi-direct product of $A,B,C$, and every ternary semi-direct product is determined in this way.
\end{theorem}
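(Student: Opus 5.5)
The plan is to reduce everything to \autoref{thm:sdp_alg_coh_2}, using \autoref{prop:LACC_nu} to see that the morphisms involved are invertible. First, since a LACC semi-abelian category is algebraically coherent, \autoref{thm:sdp_alg_coh_2} applies. Second, by \autoref{prop:LACC_nu}(3), $\nu^{\kappa,\iota}_A\colon (C\flat B)\flat C\flat A\to (C+B)\flat A$ is an isomorphism. The other morphism $\nu^{k_\xi,\eta}_{C\flat A}\colon K\flat B\flat C\flat A\to (C\flat B)\flat C\flat A$ is also an isomorphism: by \autoref{ex:coproduct}, $C\flat B\simeq K\rtimes B$ for the conjugation action of $B$ on $K$, with $k_\xi=j_K$ and $\eta^C_B=s_B$, so \autoref{prop:LACC_nu}(4), applied with $B$ in place of $C$, gives the claim. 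Hence the displayed composite $\xi^{C+B}_A$ is well defined.

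Next I will observe that, by construction, $\xi^{C+B}_A\circ\nu^{\kappa,\iota}_A\circ\nu^{k_\xi,\eta}_{C\flat A}=\phi^\ast\chi_A\circ K\flat\xi^B_A\circ K\flat B\flat\xi^C_A$. This is exactly square $(i)$ of \eqref{eq:compat_act_AC_coprod_2}. Moreover, a morphism $\xi$ makes $(i)$ commute if and only if $\xi=\xi^{C+B}_A$, because the two $\nu$'s are invertible. Condition (4) of \autoref{thm:sdp_alg_coh_2} therefore reduces to square $(ii)$, which is the displayed square in the statement. So if that square commutes, (4)$\Rightarrow$(1) shows that $\xi^{C+B}_A$ is an action with the prescribed restrictions. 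Conversely, if $\xi^{C+B}_A$ is an action, we need its restrictions along $\iota_B,\iota_C,k_{B\rtimes C}$ to be $\xi^B_A,\xi^C_A,\phi^\ast\chi_A$, so that (1) holds and (1)$\Rightarrow$(4) yields $(ii)$.

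The converse direction is the main obstacle, since a priori the composite being an action does not tell us what its restrictions are. I plan to compute them directly from the defining composite.
\begin{itemize}
\item \emph{Restriction along $\iota_C$.} Precompose with $\iota_C\flat A$, which corresponds under the $\nu$-isomorphisms to the unit $\eta^{K}\circ\eta^B$ on $C\flat A$. The unit laws of $\xi^B_A$ and $\phi^\ast\chi_A$ then give $\xi^C_A$.
\item \emph{Restriction along $\iota_B$.} The analogous computation, using $\eta^C_A$ and the unit of $\xi^C_A$, gives $\xi^B_A$.
\item \emph{Restriction along $k_{B\rtimes C}$.} Here $k_{B\rtimes C}\flat A$ corresponds to $K\flat\eta^B_{C\flat A}\circ K\flat\eta^C_A$ composed with the $\nu$'s. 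This uses \autoref{prop:nu_prop}(1) to identify $\nu^{\kappa,\iota}\circ\nu^{k_\xi,\eta}\circ(K\flat\eta\flat\eta)$ with $k_{B\rtimes C}\flat A$, and the result is $\phi^\ast\chi_A$.
\end{itemize}
Once these restrictions are known, (1) holds and the equivalence follows.

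Finally, the last clause of \autoref{thm:sdp_alg_coh_2} says that $(\xi^C_B,\xi^{C+B}_A,\phi)$ is a ternary action, and \autoref{prop:3sdp_colimit} turns it into a ternary semi-direct product. Conversely, any ternary semi-direct product yields, by \autoref{prop:3sdp_colimit}, a ternary action $(\xi^C_B,\xi^{C+B}_A,\phi)$ with $\phi$ equivariant. By (1)$\Rightarrow$(4), this $\xi^{C+B}_A$ satisfies $(i)$, so it coincides with the composite above. The semi-direct product is therefore determined by $\xi^B_A,\xi^C_A,\xi^C_B,\phi$.
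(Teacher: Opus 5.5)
Your proposal is correct and follows the paper's route: the paper derives this theorem directly from \autoref{prop:LACC_nu} and \autoref{thm:sdp_alg_coh_2}, as you do, by noting that the $\nu$'s are invertible so that square $(i)$ of \eqref{eq:compat_act_AC_coprod_2} characterizes the composite and only $(ii)$ remains. You also make explicit two points the paper leaves implicit. The first is the invertibility of $\nu^{k_\xi,\eta}_{C\flat A}$, via $C\flat B\simeq K\rtimes B$. The second, needed for the ``only if'' direction, is that the composite restricts to $\xi^C_A$, $\xi^B_A$ and $\phi^\ast\chi_A$ along $\iota_C$, $\iota_B$ and $k_{B\rtimes C}$ even before one knows it is an action.
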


\section{Ternary actions in concrete categories}

\subsection{Groups}

In the category of groups, a ternary semi-direct product of $A,B,C$ is determined (\cite{CC91}) by the data of:
\begin{itemize}
	\item actions of $C$ and $B$ on $A$ and of $C$ on $B$, which we denote by $c,a\mapsto \act{c}{a}$, $b,a\mapsto \act{b}{a}$ and $c,b\mapsto \act{c}{b}$
	\item a function $\Phi\colon C\times B\to A$
\end{itemize}
satisfying the following identities :
\begin{subequations}
	\begin{align}
		\Phi(c,1) & = 1 = \Phi(1,b) \label{eq:tern_grp_1} \\
		\Phi(c,b_1b_2) & = \Phi(c,b_1) \act{(\act{c}{b_1})}{\Phi(c,b_2)} \label{eq:tern_grp_b}\\
		\Phi(c_1c_2,b) & =  \act{c_1}{\Phi(c_2,b)} \Phi(c_1,\act{c_2}{b}) \label{eq:tern_grp_c} \\
		\act{c}{(\act{b}{a})} \Phi(c,b) & = \Phi(c,b) \act{(\act{c}{b})}{(\act{c}{a})} \label{eq:tern_grp_comp}.
	\end{align}
\end{subequations}

The action of $C$ on $A\rtimes B$ is then defined by the formula
\[\act{c}{(a,b)} = (\act{c}{a}\Phi(c,b),\act{c}{b}), \]
so that the ternary semi-direct product can be defined as $A\times B\times C$, with the operations
\[(a_1,b_1,c_1) \cdot (a_2,b_2,c_2) = (a_1 \act{b_1}{(\act{c_1}{a_2})} \act{b_1}{\Phi(c_1,b_2)} ,b_1\act{c_1}{b_2} , c_1c_2).\]

The function $\Phi$ and equations \eqref{eq:tern_grp_1}, \eqref{eq:tern_grp_b}, \eqref{eq:tern_grp_c} and \eqref{eq:tern_grp_comp} can be linked to the morphism $\phi\colon K[\xi^C_B]\to A$ and its properties. For $c\in C$ and $b\in B$, let $\langle c,b\rangle$ denote the element $cbc^{-1}(\act{c}{b})^{-1}$ of the free product $C+B$. Then as a subgroup of $C+B$, $K[\xi^C_B]$ is freely generated by the elements $\langle c,b\rangle$ with $c\neq 1\neq b$. Indeed, an element $w=c_nb_n\cdots c_1 b_1$ of $C+B$ is in the kernel of $q_{B\rtimes C}$ if and only if $c_nc_{n-1}\cdots c_1=1$ in $C$ and $\act{c_n}{b_n} \act{c_nc_{n-1}}{b_{n-1}}\cdots \act{c_n\cdots c_1}{b_1}=1$ in $B$. We can then rewrite $w$ as
\begin{align*}
	w & = c_nb_n\cdots c_1 b_1 = \langle c_n,b_n\rangle \act{c_n}{b_n}c_nc_{n-1}b_{n-1}\cdots c_1b_1 \\
	& = \langle c_n,b_n\rangle \langle c_nc_{n-1},\act{c_{n-1}^{-1}}{b_n} \rangle^{-1} (c_nc_{n-1}) (\act{c_{n-1}^{-1}}{b_n} b_{n-1}) \cdots c_1b_1,
\end{align*}
so that, by induction on $n$, we find that $w$ belongs to the subgroup generated by the terms $\langle c,b\rangle$. Furthermore, if $c_i\neq 1 \neq b_i$, then there is no simplification in the product
\[\langle c_1,b_1\rangle \langle c_2,b_2\rangle = c_1b_1c_1^{-1}(\act{c_1}{b_1})^{-1} c_2b_2c_2^{-1}(\act{c_2}{b_2})^{-1}, \]
since the elements of $C$ and $B$ are interleaved; while in the product
\[\langle c_1,b_1\rangle^{-1} \langle c_2,b_2\rangle = \act{c_1}{b_1}c_1b_1^{-1}c_1^{-1}c_2b_2c_2^{-1}(\act{c_2}{b_2})^{-1}\]
the product $c_1^{-1}c_2$ can only be simplified if $c_1=c_2$, leaving a term $b_1^{-1}b_2$ that can only be simplified if furthermore $b_1=b_2$, in which case $\langle c_1,b_1\rangle = \langle c_2,b_2\rangle$ and the whole product vanishes. Similarly, in the product
\[\langle c_1,b_1\rangle \langle c_2,b_2\rangle^{-1} = c_1b_1c_1^{-1}(\act{c_1}{b_1})^{-1} (\act{c_2}{b_2}) c_2b_2^{-1} c_2^{-1},\]
the part $(\act{c_1}{b_1})^{-1} (\act{c_2}{b_2})$ can only be simplified if $\act{c_1}{b_1} = \act{c_2}{b_2}$, leaving a term $c_1^{-1}c_2$ that can only be simplified if $c_1=c_2$, which implies $b_1=b_2$, in which case the whole product vanishes. Thus a reduced non-trivial product of generators $\langle c,b\rangle$ is also a reduced word in $C+B$, so that the subgroup generated by these terms is free.

We thus obtain a bijection between morphisms of groups $\phi \colon K[\xi^C_B]\to A$ and functions $\Phi\colon C\times B\to A$ such that $\Phi(c,b)=1$ whenever $c$ or $b$ is $1$, by defining $\Phi \colon (c,b)\mapsto \phi(\langle c,b\rangle)$. Then $\phi$ is equivariant if and only if $\Phi$ satisfies \eqref{eq:tern_grp_b} and \eqref{eq:tern_grp_c}. Indeed, we have
\begin{align*}
	b_1 \langle c,b_2 \rangle b_1^{-1} & = b_1c b_2c^{-1}(\act{c}{b_2})^{-1} b_1^{-1} =  b_1 c (\act{c^{-1}}{b_1}) c^{-1} c (\act{c^{-1}}{b_1} b_2) c^{-1} \act{c}{(\act{c^{-1}}{b_1}b_2)}^{-1} \\
	& = \langle c, \act{c^{-1}}{b_1}\rangle^{-1} \langle c,\act{c^{-1}}{b_1}b_2\rangle
\end{align*}
and thus the $B$-equivariance of $\phi$ is equivalent to
\[\act{b_1}{\Phi(c,b_2)} = \Phi(c,\act{c^{-1}}{b_1})^{-1} \Phi (c,\act{c^{-1}}{b_1}b_2),\]
which gives \eqref{eq:tern_grp_b}. Similarly, we have
\begin{align*}
	c_1\langle c_2,b\rangle c_1^{-1} & = c_1  c_2bc_2^{-1}(\act{c_2}{b})^{-1} c_1^{-1} = c_1c_2bc_2^{-1}c_1^{-1} (\act{c_1c_2}{b})^{-1} (\act{c_1c_2}{b}) c_1 (\act{c_2}{b})^{-1} c_1^{-1} \\
	& = \langle c_1c_2,b\rangle \langle c_1,\act{c_2}{b}\rangle^{-1}\end{align*}
so that the $C$-equivariance of $\phi$ is equivalent to
\[\act{c_1}{\Phi(c_2,b)} = \Phi(c_1c_2,b) \Phi(c_1, \act{c_2}{b})^{-1}\]
which is equivalent to \eqref{eq:tern_grp_c}.

Finally, since the category of groups is action representable, the compatibility condition is simply the coincidence of the two actions of $K[\xi^C_B]$ on $A$, which can simply be checked on the generators $\langle c, b\rangle$, giving
\[\Phi(c,b) a \Phi(c,b)^{-1} = \act{(cbc^{-1}(\act{c}{b})^{-1})}{a};\]
which is equivalent to \eqref{eq:tern_grp_comp}.

For example, any group $X$ acts on itself by conjugation; if one takes $A=B=C=X$, with each action being the conjugation action, then condition \eqref{eq:tern_grp_comp} becomes
\[ x(yzy^{-1})x^{-1} \Phi(x,y) = \Phi(x,y) (xyx^{-1})(xzx^{-1}) (xy^{-1}x^{-1}) = \Phi(x,y) xy z y^{-1}x^{-1},\]
so that $\Phi(x,y) z = z\Phi(x,y)$. Then \eqref{eq:tern_grp_b} becomes
\[\Phi(x,yz) = \Phi(x,y) xyx^{-1}\Phi(x,z) xy^{-1}x^{-1} = \Phi(x,y) \Phi(x,z),\]
and \eqref{eq:tern_grp_c} becomes
\begin{align*}
	\Phi(z_1z_2,y) & =  z_1\Phi(z_2,y)z_1^{-1} \Phi(z_1,z_2yz_2^{-1}) =  \Phi(z_1,z_2yz_2^{-1}) \Phi(z_2,y)\\
	& = \Phi(z_1,z_2) \Phi(z_1,y) \Phi(z_1,z_2^{-1}) \Phi(z_2,y)\\
	& = \Phi(z_1,y) \Phi(z_2,y).\end{align*}

In particular, if $X$ is the additive group of a ring $R$, then defining $\Phi(x,y)=xy$ always yields a ternary semi-direct product $(X\rtimes X)\rtimes X$, which is known as the \emph{Heisenberg group} $H_3(R)$. It is isomorphic to the group of matrices of the form
\[\begin{pmatrix}
	1 & c & a\\
	0 & 1 & b\\
	0 & 0 & 1
\end{pmatrix}\]
with $a,b,c\in R$.

The corresponding action of $X$ on $X\rtimes X = X^2$ is induced by the group homomorphism $c\mapsto \begin{pmatrix}
	1 & c\\
	0 & 1
\end{pmatrix}$. The short exact sequence $\begin{tikzcd}
	X\ar[r,"{(1,0)}"] & X^2 \ar[r,"\pi_2"] & X
\end{tikzcd}$ is then equivariant (with respect to the trivial action on each instance of $X$), but there is no equivariant section of $\pi_2$.

Similarly, when the actions between $C,B,A$ are trivial, the equations become
\begin{align*}
	\Phi(c,b_1b_2) & = \Phi(c,b_1)\Phi(c,b_2) \\
	\Phi(c_1c_2,b) & = \Phi(c_1,b) \Phi(c_2,b) \\
	a \Phi(c,b) & = \Phi(c,b) a.
\end{align*}
In particular, if $A,B,C$ are abelian and $\Phi\colon C\times B\to A$ is bilinear, we find that generalized Heisenberg groups in the sense of \cite{Meg95} are also examples of ternary semi-direct product.

\subsection{Lie algebras}

In the category $\Lie_k$ of Lie algebras over a field $k$, we denote an action of an algebra $X$ on an algebra $Y$ by $(x,y)\in X\times Y\mapsto \act{x}{y}$. A ternary semi-direct product of $A,B,C$ is determined (\cite{CC17}) by
\begin{itemize}
	\item actions of $C$ and $B$ on $A$ and of $C$ on $B$
	\item a $k$-bilinear function $\Phi \colon C\times B\to A$
\end{itemize}
satisfying the identities
\begin{subequations}
	\begin{align}
		\Phi(c,[b_1,b_2]) & = \act{b_1}{\Phi(c,b_2)} - \act{b_2}{\Phi(c,b_1)} \label{eq:tern_lie_b} \\
		\Phi([c_1,c_2],b) & = \act{c_1}{\Phi(c_2,b)} + \Phi(c_1,\act{c_2}{b}) - \act{c_2}{\Phi(c_1,b)} - \Phi(c_2,\act{c_1}{b}) \label{eq:tern_lie_c}\\
		\act{c}{(\act{b}{a})} & = [\Phi(c,b),a] + \act{(\act{c}{b})}{a}+ \act{b}{(\act{c}{a})} \label{eq:tern_lie_comp}.
	\end{align}
\end{subequations}

The action of $C$ on $A\rtimes B$ is then defined as
\[\act{c}{(a,b)} = (\act{c}{a}+\Phi(c,b) , \act{c}{b}),\]
so that the ternary semi-direct product of $A,B,C$ is then the direct sum $A\oplus B\oplus C$, with the Lie bracket
\begin{align*}
	[a_1+b_1+c_1,a_2+b_2+c_2] = & [a_1,a_2] + \act{b_1}{a_2}-\act{b_2}{a_1} + \act{c_1}{a_2}-\act{c_2}{a_1} +\Phi(c_1,b_2)-\Phi(c_2,b_1)\\
	& + [b_1,b_2] + \act{c_1}{b_2}-\act{c_2}{b_1} + [c_1,c_2]\end{align*}

We can again establish a bijection between bilinear functions $\Phi$ satisfying the equations \eqref{eq:tern_lie_b}, \eqref{eq:tern_lie_c} and \eqref{eq:tern_lie_comp} and appropriate morphisms of Lie algebras $\phi\colon K[\xi^C_B]\to A$.

Indeed, if $(b_i)_{i\in I}$ (resp. $(c_j)_{j\in J}$) is a totally ordered base of $B$ (resp. $C$), and if we define $X_{B,C}$ to be the set of elements of $C+B$ of the form
\begin{equation}
	[b_{i_1},\cdots [b_{i_n}, [c_{j_1}, \cdots [c_{j_m}, b_{i_0}]]]] \label{eq:gen_cartesian}
\end{equation}
where $i_0\leq i_1\leq \cdots\leq i_n$ and $j_1\leq \cdots \leq j_m$ (with $m\geq 1$ and $n \geq 0$), then by Theorem 1 of \cite{Kuk70} (see also Exercise 2.8.11 in \cite{Bac21}), the kernel of $q_{B\times C}$ is the free Lie algebra on the set $X_{B,C}$; thus every element of $C+B$ is of the form $\sum_i w_i+c+b$, where each $w_i$ is a bracket of elements of $X_{B,C}$. The morphism $q_{B\rtimes C}\colon C+B \to B\rtimes C$ sends such an element to $(\xi^C_B(w_i)+b,c)$, and thus its kernel is spanned by elements of the form $w - \xi^C_B(w)$. In particular, the set $X'_{B,C}$ of elements of $C+B$ of the form $x-\xi^C_B(x)$ with $x\in X_{B,C}$ is contained in $K[\xi^C_B]$.

\begin{lemma}
	The subalgebra $\langle X'_{B,C}\rangle$ of $C+B$ generated by $X'_{B,C}$ is $K[\xi^C_B]$.
\end{lemma}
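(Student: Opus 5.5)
We need to show that $\langle X'_{B,C}\rangle = K[\xi^C_B]$. The inclusion $\langle X'_{B,C}\rangle\subseteq K[\xi^C_B]$ is immediate. Each $x-\xi^C_B(x)$ lies in $K[\xi^C_B]$, as noted just before the statement, and $K[\xi^C_B]$ is a subalgebra.

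Here $\xi^C_B(x)$ denotes the element of $B\subseteq C+B$ obtained by applying $\xi^C_B$. This makes sense because $X_{B,C}\subseteq C\flat B$, since each generator \eqref{eq:gen_cartesian} contains at least one $c$. The map $q_{B\rtimes C}$ restricted to $C\flat B$ identifies with $\xi^C_B$, followed by $j_B$.

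For the reverse inclusion, I use the decomposition from \cite{Kuk70}. The subalgebra $C\flat B = K[[1,0]]$ is the free Lie algebra $L(X_{B,C})$, and $C+B = C\flat B\oplus C$ as vector spaces. An element $u$ of $K[\xi^C_B]$ lies in $C\flat B$, because its image in $C$ must be $0$. So it suffices to prove the following claim: for every $w\in L(X_{B,C})$, we have $w-\xi^C_B(w)\in\langle X'_{B,C}\rangle + B$. Indeed, if $u\in K[\xi^C_B]$, the claim gives $u = u'+b$ with $u'\in\langle X'_{B,C}\rangle$ and $b\in B$. Applying $\xi^C_B$, which is the identity on $B$ (the unit axiom, via $\eta^C_B$) and vanishes on $u$ and on $u'$, shows $b=0$.

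To prove the claim, I argue by induction on the bracket length of $w$ as an element of the free Lie algebra on $X_{B,C}$. By linearity it suffices to treat brackets of generators. For a generator $x$, the element $x-\xi^C_B(x)$ lies in $X'_{B,C}$. For $w=[w_1,w_2]$, write $w_i = w_i' + \beta_i$, where $w_i'=w_i-\xi^C_B(w_i)$ lies in $\langle X'_{B,C}\rangle$ by induction and $\beta_i=\xi^C_B(w_i)\in B$. Then
\[[w_1,w_2] = [w_1',w_2'] + [w_1',\beta_2] + [\beta_1,w_2'] + [\beta_1,\beta_2].\]

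The main obstacle is the mixed terms $[w_1',\beta]$ with $\beta\in B$. I need $\langle X'_{B,C}\rangle$ to be stable under brackets with $B$, up to terms in $B$. On generators this follows from the shape of $X_{B,C}$. For $x = [b_{i_1},\dots[c,\dots,b_{i_0}]]$ and a basis element $b_i$, the bracket $[b_i,x]$ is, after reordering using the Jacobi identity and the Kukin normal form, a linear combination of elements of $X_{B,C}$ and brackets of them. Since $\xi^C_B$ is a Lie morphism $C\flat B\to B$, we have $[b_i,x-\xi^C_B(x)] = [b_i,x]-\xi^C_B([b_i,x])$ modulo $B$; more precisely, $[b_i,\xi^C_B(x)] = \xi^C_B([\eta(b_i),x])$. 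This reduces the mixed terms to the claim for $[b_i,x]$ viewed inside $L(X_{B,C})$, which has controlled length. I would handle this by a simultaneous induction: I will show that $\langle X'_{B,C}\rangle + B$ is a subalgebra containing all $w-\xi^C_B(w)$. A cleaner alternative, which I would try first, is to note that $W=\{w-\xi^C_B(w) : w\in C\flat B\}$ is exactly $K[\xi^C_B]$. The map $w\mapsto w-\xi^C_B(w)$ is linear, and its restriction to the generators $X_{B,C}$ spans $W$ modulo brackets. So it suffices to show $\langle X'_{B,C}\rangle$ is an ideal of $C\flat B$ with quotient mapping isomorphically onto $B$ via $\xi^C_B$. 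The ideal property is then checked on generators $x\in X_{B,C}$ together with $B$, using the relation $[x'_1,x'_2]=[x_1,x_2]-[x_1,\xi(x_2)]-[\xi(x_1),x_2]+\xi([x_1,x_2])$.
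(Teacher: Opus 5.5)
Your decomposition of $w-\xi^C_B(w)$ into $[w_1',w_2']$ plus two mixed terms matches the paper's proof. So does your identity $[w_1',\beta]=[w_1,\beta]-\xi^C_B([w_1,\beta])$ for $\beta\in B$. The gap is in closing off the mixed terms. You induct on the bracket length of $w$ in the free Lie algebra $L(X_{B,C})$ and assert that $[w_1,\xi^C_B(w_2)]$ ``has controlled length''. For that measure it does not: bracketing a single generator with an element of $B$ can produce brackets of several generators. For basis elements $b'<b_0$ of $B$ and $c_1<c_2$ of $C$, the Jacobi identity gives
\[[b',[c_1,[c_2,b_0]]]=-[[c_1,b'],[c_2,b_0]]+[[c_1,b_0],[c_2,b']]+[b_0,[c_1,[c_2,b']]]+[c_1,[c_2,[b',b_0]]].\]
The first two summands are brackets of two distinct free generators. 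So take $w=[x,y]$ with $x=[c_1,[c_2,b_0]]$ and $y\in X_{B,C}$. The element to which you reduce the mixed term need not have smaller bracket length than $w$, and the induction does not close.

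Your two fallbacks only move the problem. Showing that $\langle X'_{B,C}\rangle+B$ is a subalgebra, or that $\langle X'_{B,C}\rangle$ is an ideal of $C\flat B$, both come down to $[b,x]-\xi^C_B([b,x])\in\langle X'_{B,C}\rangle$ for $b\in B$, $x\in X_{B,C}$. Expanding $[b,x]$ as above reintroduces brackets of generators, hence mixed terms, and you still have no decreasing quantity.

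The missing idea is the paper's induction parameter: the total number $d$ of letters from $B$ and $C$ needed to write $w$ as a bracket of elements of $X_{B,C}$. Every generator has $d\ge 2$, whereas $\xi^C_B(w_2)$ is a single letter. Rewriting $[w_1,\xi^C_B(w_2)]$, which lies in the ideal $L(X_{B,C})$, in terms of generators never increases the letter count: the Jacobi identity preserves it, and brackets computed inside $B$ or $C$ lower it. Hence $d([w_1,\xi^C_B(w_2)])\le d(w_1)+1<d(w_1)+d(w_2)=d(w)$, and the induction hypothesis applies. With this measure your argument becomes the paper's proof.

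There are also two smaller slips. First, $L(X_{B,C})$ is the kernel of $q_{B\times C}$, not $C\flat B$; in fact $C\flat B=L(X_{B,C})\oplus B$. Your reduction survives anyway, because any $u\in K[\xi^C_B]$ splits as $w+b_0$ with $w\in L(X_{B,C})$, and then $u=w-\xi^C_B(w)$. Second, $X_{B,C}\subseteq C\flat B$ holds because each generator contains a letter from $B$, not because it contains one from $C$.
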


\begin{proof}
	Since $K[\xi^C_B]$ is spanned by elements of the form $w - \xi^C_B(w)$, it suffices to prove that all these elements are in $\langle X'_{B,C}\rangle$, which we will do by induction on the degree of $w$, which is the total number $d$ of elements of $B$ and $C$ appearing when writing $w$ as a bracket of elements of $X_{B,C}$. Since every element of $X_{B,C}$ is written with at least two elements, if $d=2$ then $w\in X_{B,C}$, so that $w-\xi^C_B(w)\in X'_{B,C}$. Now assume $v - \xi^C_B(v)$ is in $\langle X'_{B,C}\rangle$ whenever the degree of $v$ is less than $n$, and consider an element $w$ of degree $n$. If $w\in X_{B,C}$, then $w-\xi^{C}_B(w)\in X'_{B,C}$. Otherwise, we have $w=[u,v]$ with $u,v$ being themselves brackets of elements of $X_{B,C}$, and $d(u)+d(v)=n$. Then we have
	\begin{align*}
		w -\xi(w) & = [u,v] - [\xi(u),\xi(v)]\\
		& = [u-\xi(u),v-\xi(v)] + [\xi(u),v-\xi(v)] + [u-\xi(u),\xi(v)].
	\end{align*}
	Since the degrees of $u$ and $v$ are less than $n$, $u-\xi(u)$ and $v-\xi(v)$ are in $\langle X'_{B,C}\rangle$, and thus so is $[u-\xi(u),v-\xi(v)]$. Furthermore, we have
	\[[u-\xi(u),\xi(v)] = [u,\xi(v)]-[\xi(u),\xi(v)] = [u,\xi(v)]-\xi([u,\xi(v)]).\]
	Since $[u,\xi(v)]$ can be written as a sum of terms of degree at most $d(u)+1<d(u)+d(v)=n$, by induction $[u-\xi(u),\xi(v)] \in \langle X'_{B,C}\rangle$, and similarly $[\xi(u),v-\xi(v)] \in \langle X'_{B,C}\rangle$. Thus $w -\xi(w)\in \langle X'_{B,C}\rangle$.
\end{proof}

The restriction of $q_{B\rtimes C}$ to the vector space $C\oplus B$ is the identity, and thus the $(C\oplus B)\cap K[\xi^{C}_B] =0$; furthermore, $X'_{B,C}$ is an independent set, because the leading terms of the elements of $X'_{B,C}$ are the elements of $X_{B,C}$. By Theorem 3 in \cite{Kuk70}, $K[\xi^{C}_{B}]$ is thus the free Lie algebra on ${X'}_{B,C}$.

Thus a morphism of Lie algebras $K[\xi^{C}_{B}]\to A$ is completely determined by its restriction to $X'_{B,C}\to A$. Furthermore, for an element $w$ of $X_{B,C}$ written as \eqref{eq:gen_cartesian}, the element $w-\xi^C_B(w)$ is
\begin{align*}
	[b_{i_1},\cdots [b_{i_n}, [c_{j_1}, \cdots [c_{j_m}, b_{i_0}]] - \act{c_{j_1} \cdots}{(\act{c_{j_m}} {b_{i_0}})}]]\end{align*}
so that a $B$-equivariant morphism $K[\xi^{C}_{B}]\to A$ is determined by its restriction to the set of elements of the form
\[ [c_{j_1},\cdots [c_{j_m}, b]] - \act{c_{j_1}\cdots}{(\act{c_{j_m}} {b})}.\]

Furthermore, for such elements we have
\begin{align*}
	[c_{j_1},\cdots [c_{j_m}, b]] - \act{c_{j_1} \cdots}{(\act{c_{j_m}} {b})} = &  [c_{j_1},[c_{j_2},\cdots [c_{j_m}, b]] - \act{c_{j_2} \cdots}{(\act{c_{j_m}} {b})}] \\ & + [c_{j_1}, \act{c_{j_2} \cdots}{(\act{c_{j_m}} {b})}] - \act{c_{j_1} }{(\act{c_{j_2}\cdots}{(\act{c_{j_m}} {b})})}
\end{align*}
and thus a morphism $K[\xi]\to A$ which is both $B$ and $C$-equivariant is determined by its restriction to the elements of the form $\langle c,b\rangle = [c,b]-\act{c}{b}$, which determines a bilinear map $\Phi\colon C\times B\to A\colon(c,b)\mapsto \phi(\langle c,b\rangle)$.

We have in $C+ B$ the following equations :

\begin{align*}
	\langle c, [b_1,b_2] \rangle & = [c,[b_1,b_2]] - \act{c}{[b_1,b_2]} \\
	& = [[c,b_1],b_2] + [b_1,[c,b_2] - [\act{c}{b_1},b_2] - [b_1,\act{c}{b_2}] \\
	& = [\langle c, b_1 \rangle , b_2] + [b_1, \langle c, b_2 \rangle ];\\
	\langle [c_1,c_2],b \rangle & = [[c_1,c_2],b] - \act{[c_1,c_2]}{b} \\
	& = [c_1,[c_2,b]] - [c_2,[c_1,b]] - \act{c_1}{(\act{c_2}{b})} + \act{c_2}{(\act{c_1}{b})} \\
	& = [c_1,[c_2,b]] - [c_1,\act{c_2}{b}] + [c_1,\act{c_2}{b}] - \act{c_1}{(\act{c_2}{b})} - [c_2,[c_1,b]] + \act{c_2}{(\act{c_1}{b})} \\
	& = [c_1, \langle c_2,b \rangle] + \langle c_1,\act{c_2}{b} \rangle - [c_2, \langle c_1,b \rangle] - \langle c_2,\act{c_1}{b} \rangle.
\end{align*}

Thus the bilinear map $\Phi$ determined by a $(C+B)$-equivariant morphism $\phi\colon K[\xi^{C}_{B}]\to A$ must satisfy
\begin{align*}\Phi(c,[b_1,b_2])&  = \act{b_1}{\Phi(c,b_2)} - \act{b_2}{\Phi(c,b_1)}\\
	\Phi([c_1,c_2],b) & = \act{c_1}{\Phi(c_2,b)} - \act{c_2}{\Phi(c_1,b)} + \Phi(c_1,\act{c_2}{b}) - \Phi(c_2,\act{c_1}{b})\end{align*}

Finally, the equality between the two actions of $K[\xi^C_B]$ on $A$ is equivalent to
\[[\Phi(c,b),a] = \act{([c,b]-\act{c}{b})}{a} = \act{c}{(\act{b}{a})} - \act{b}{(\act{c}{a})} - \act{(\act{c}{b})}{a}\]
which is equivalent to \eqref{eq:tern_lie_comp}.

Similarly to the group case, the Heisenberg algebra, which is the Lie algebra of matrices
\[\begin{pmatrix}
	0 & c & a\\
	0 & 0 & b\\
	0 & 0 & 0
\end{pmatrix}\in \R^{3\times 3}\]
can be seen as a ternary semi-direct product $(\R\rtimes \R) \rtimes \R$ of three copies the abelian Lie algebra $\R$, with each action trivial and a non-trivial bilinear map $\Phi(c,b)=cb$, corresponding to the action of $\R$ on $\R^2\simeq \R\rtimes \R$ induced by $z\mapsto \begin{pmatrix}
	1 & z \\ 0 & 1
\end{pmatrix}$. 

\subsection{Commutative and associative algebras}

Let $R$ be a commutative (unital) ring, and let $\CAlg_R$ denote the category of commutative and associative $R$-algebras. The coproduct of two objects $C,B$ in $\CAlg_R$ is $C\otimes_R B \oplus C \oplus B$, and $C\flat B$ is given by $C\otimes_R B\oplus B$. The category of $C$-actions is equivalent to the category of commutative $C$-algebras \cite{BJK05a}, and if $B$ is a $C$-algebra, then the semi-direct product of $C$ and $B$ is the product $B\times C$, with the multiplication
\[(b_1,c_1)\cdot (b_2,c_2) = (b_1b_2+\act{c_1}{b_2} + \act{c_2}{b_1}, c_1c_2).\]

The kernel $K[\xi^{C}_{B}]$ of the action is the subalgebra of elements of the form $\sum_i c_i\otimes b_i -(\sum_i \act{c_i}{b_i}) $. In particular, if we denote $\langle c,b\rangle = c\otimes b - \act{c}{b}\in K[\xi^{C}_{B}]$, then the map
\[C\otimes_R B \to K[\xi^{C}_{B}]\colon c\otimes b\mapsto \langle c,b\rangle \]
is an isomorphism of $R$-modules, so that we can identify $K[\xi^{C}_{B}]$ with $C\otimes_R B$. The multiplication in $K[\xi^{C}_{B}]$ can then be described as
\begin{align*}
	\langle c_1,b_1\rangle \cdot \langle  c_2 , b_2 \rangle = & (c_1\otimes b_1 - \act{c_1}{b_1}) \cdot (c_2\otimes b_2 - \act{c_2}{b_2})\\
	= & c_1c_2\otimes b_1b_2 - c_1 \otimes \act{c_2}{(b_1b_2)} - c_2 \otimes \act{c_1}{(b_1b_2)} + \act{c_1c_2}{b_1b_2} \\
	= & c_1c_2\otimes b_1b_2 -\act{c_1c_2}{(b_1b_2)} - c_1 \otimes \act{c_2}{(b_1b_2)} + \act{c_1c_2}{(b_1b_2)}\\ & - c_2 \otimes \act{c_1}{(b_1b_2)} + \act{c_1c_2}{(b_1b_2)} \\
	= & \langle c_1c_2, b_1b_2\rangle -\langle c_1, \act{c_2}{b_1b_2}\rangle - \langle c_2, \act{c_1}{b_1b_2}\rangle,\end{align*}
while the action of $C$ on $K[\xi^{C}_{B}]$ can be described as
\begin{align*}
	\act{c_1}{\langle c_2, b\rangle } & = c_1 (c_2\otimes b - \act{c_2}{b}) = c_1c_2\otimes b - c_1 \otimes \act{c_2}{b}\\
	& = c_1c_2\otimes b -\act{c_1c_2}{b}- c_1 \otimes \act{c_2}{b} +\act{c_1c_2}{b} \\
	& = \langle c_1c_2, b \rangle - \langle c_1, \act{c_2}{b} \rangle \end{align*}
and the action of $B$ on $K[\xi^{C}_{B}]$ can be described as
\begin{align*}
	\act{b_1}{\langle c, b_2\rangle} & = b_1(c\otimes b_2 - \act{c}{b_2}) = c\otimes b_1b_2 - \act{c}{(b_1b_2)} = \langle c, b_1b_2\rangle.\end{align*}

A morphism $\phi\colon K[\xi^{C}_{B}]\to A$ is thus equivalent to a bilinear function $\Phi \colon C\times B \to A$ such that
\begin{equation}
	\Phi(c_1c_2,b_1b_2) = \Phi(c_1,b_1) \Phi(c_2,b_2) + \Phi(c_1,\act{c_2}{b_1b_2}) + \Phi(c_2,\act{c_1}{b_1b_2}); \label{eq:calg_ccbb}
\end{equation}
\begin{subequations}
	such a morphism is $C$ and $B$-equivariant if and only if
	\begin{align}
		\Phi(c_1c_2,b) & = \act{c_1}{\Phi(c_2,b)} + \Phi(c_1,\act{c_2}{b})\label{eq:calg_ccb}\\
		\Phi(c,b_1b_2) & = \act{b_1}{\Phi(c,b_2)}. \label{eq:calg_cbb}
	\end{align}
	
	An action of $C+B$ on $A$ is equivalent to a pair of actions of $C$ and $B$ on $A$ such that
	\[\act{c}{(\act{b}{a})} = \act{b}{(\act{c}{a})}\]
	for all $a\in A,b\in B,c\in C$. Moreover, the compatibility condition in this case gives
	\begin{equation}
		\act{c}{(\act{b}{a})} = \Phi(c,b)a + \act{(\act{c}{b})}{a} = \act{b}{(\act{c}{a})}
		\label{eq:calg_cba}
	\end{equation}
\end{subequations}
We note that the equation \eqref{eq:calg_ccbb} can be derived from \eqref{eq:calg_cbb},\eqref{eq:calg_ccb} and \eqref{eq:calg_cba}. Indeed we have
\begin{align*}
	\Phi(c_1c_2,b_1b_2) & = \act{c_1}{\Phi(c_2,b_1b_2)} + \Phi(c_1,\act{c_2}{b_1b_2})\\
	& = \act{c_1}{(\act{b_1}{\Phi(c_2,b_2)})} + \Phi(c_1,\act{c_2}{b_1b_2})\\
	& = \Phi(c_1,b_1)\Phi(c_2,b_2) + \act{(\act{c_1}{b_1})}{\Phi(c_2,b_2)} + \Phi(c_1,\act{c_2}{b_1b_2})\\
	& = \Phi(c_1,b_1)\Phi(c_2,b_2) + \Phi(c_2, \act{c_1}{b_1b_2}) + \Phi(c_1,\act{c_2}{b_1b_2})
\end{align*}

Thus a ternary semi-direct product of $A,B,C$ in $\CAlg_R$ is determined by actions of $B$ and $C$ on $A$ and $C$ on $B$, together with an $R$-bilinear map $\Phi\colon C\times B \to A$ satisfying \eqref{eq:calg_cbb},\eqref{eq:calg_ccb} and \eqref{eq:calg_cba}. It can be constructed as the product $A\times B\times C$, with component-wise $R$-module structure and multiplication
\begin{align*}
	(a_1,b_1,c_1) & \cdot (a_2,b_2,c_2) \\
	& = (a_1a_2 + \act{b_1}{a_2}+\act{c_1}{a_2}+\act{b_2}{a_1}+\act{c_2}{a_1} + \Phi(c_1,b_2) + \Phi(c_2,b_1),b_1b_2+\act{c_1}{b_2}+\act{c_2}{b_1},c_1c_2).
\end{align*}

\subsection{Non-commutative associative algebras}

\begin{subequations}
	The case of associative but not (necessarily) commutative algebras is similar to the previous example. The category of $C$-actions is equivalent to the category of $C$-bialgebras \cite{BJK05a}, and if $B$ is a $C$-bialgebra, then the semi-direct product of $C$ and $B$ is the product $B\times C$, with component-wise $R$-module structure and multiplication
	\[(b_1,c_1)\cdot (b_2,c_2) = (b_1b_2+\act{c_1}{b_2} + b_1^{c_2}, c_1c_2).\]
	
	The coproduct of two objects $C,B$ in $\Alg_R$ can be described as the free $R$-module on the set of alternating products of elements of $B$ and $C$. The quotient $q_{B\rtimes C}\colon C+B \to B\rtimes C$ maps each term $c_0 b_1 c_1b_2c_2\cdots b_n c_{n}$ to $(\act{c_0}{b_1^{c_1}b_2^{c_2}} \cdots \act{c_{n-1}}{b_n^{c_n}})$. Thus its kernel $K[\xi^{C}_{B}]$ is spanned (as an $R$-module) by the elements of the form
	\[c_0 b_1 c_1b_2c_2\cdots b_n c_{n} - (\act{c_0}{b_1^{c_1}b_2^{c_2}} \cdots \act{c_{n-1}}{b_n^{c_n}})\]
	(where $c_0$ or $c_{n}$ may be absent); as a $(C+B)$-bialgebra, it is generated by the terms of the form $\langle c,b\rangle_l = cb-\act{c}{b}$ and $\langle b,c\rangle_r = bc-b^{c}$ for $b\in B$, $c\in C$.
	
	A $C$ and $B$-equivariant morphism $\phi\colon K[\xi^{C}_{B}]\to A$ in $\Alg_R$ is determined by its restriction to the set of elements of this form, which we denote $\Phi_l(c,b) = \phi(\langle c,b\rangle_l)$ and $\Phi_r(b,c)= \phi(\langle b,c\rangle_r)$. Then, since
	\begin{align*}
		c_1\langle b,c_2\rangle_r + \langle c_1, b^{c_2}\rangle_l = c_1bc_2 & - \act{c_1}{b^{c_2}} = \langle c_1,b\rangle_lc_2 + \langle \act{c_1}{b},c_2\rangle_r \\
		b_1\langle c,b_2\rangle_l = b_1cb_2 & - b_1^{c}{b_2} = \langle b_1,c\rangle_r b_2 ,\end{align*}
	we must have
	\begin{gather}
		\act{c_1}{\Phi_r(b,c_2)} + \Phi_l(c_1,b^{c_2}) = \Phi_l(c_1,b)^{c_2} + \Phi_r(\act{c_1}{b},c_2) \label{eq:alg_cbc} \\
		\act{b_1}{\Phi_l(c,b_2)} = \Phi_r(b_1,c)^{b_2}. \label{eq:alg_bcb}
	\end{gather}
	
	Similarly to the commutative case, the $C$ and $B$-equivariance of $\phi$ implies that $\Phi_l$ and $\Phi_r$ must satisfy the identities
	\begin{align}
		\Phi_l(c_1c_2,b) & = \act{c_1}{\Phi_l(c_2,b)} + \Phi_l(c_1,\act{c_2}{b}) \label{eq:alg_ccb}\\
		\Phi_r(b,c_1c_2) & = \Phi_r(b,c_1)^{c_2} + \Phi_r(b^{c_1},c_2)). \label{eq:alg_bcc}\\
		\Phi_l(c,b_1b_2) & = \Phi_l(c,b_1)^{b_2} \label{eq:alg_cbb}\\
		\Phi_r(b_1b_2,c) & = \act{b_1}\Phi_r(b_2,c). \label{eq:alg_bbc}
	\end{align}
	
	An action of $C+B$ on $A$ is equivalent to actions of $C$ and $B$ on $A$ such that
	\begin{equation*}
		\act{c}{(a^b)} = (\act{c}{a})^b \qquad \text{and} \qquad \act{b}{(a^c)} = (\act{b}{a})^c 
	\end{equation*}
	for all $a\in A,b\in B,c\in C$. Moreover, the compatibility condition $k_{B\rtimes C}^{\ast} \xi^{C+B}_A = \phi^\ast \chi_A$ in this case gives
	\begin{align}
		\act{c}{(\act{b}{a})} = \Phi_l(c,b) a + \act{(\act{c}{b})}{a} \qquad \text{and} \qquad (a^{c})^{b} = a\Phi_l(c,b) + a^{(\act{c}{b})} \label{eq:alg_cba+acb} \\
		\act{b}{(\act{c}{a})} = \Phi_r(b,c) a + \act{(b^{c})}{a} \qquad \text{and} \qquad (a^b)^{c} = a\Phi_r(b,c) + a^{(b^{c})}\label{eq:alg_bca+abc}
	\end{align}
\end{subequations}

A ternary semi-direct product of $A,B,C$ in $\Alg_R$ is thus determined by
\begin{itemize}
	\item a $B$-bialgebra structure on $A$
	\item $C$-bialgebra structures on $A$ and $B$
	\item two $R$-bilinear maps $\Phi_l\colon C\times B\to A$ and $\Phi_r\colon B\times C\to A$
\end{itemize}
such that $A$ is a $(C,B)$-bimodule and $(B,C)$-bimodule, and $\Phi_l,\Phi_r$ satisfy equations \eqref{eq:alg_cbc} to \eqref{eq:alg_bca+abc}. It can be constructed as the product $A\times B\times C$, with component-wise $R$-module structure and multiplication
\begin{align*}
	(a_1,b_1,c_1)& \cdot (a_2,b_2,c_2)\\
	& = (a_1a_2 + \act{b_1}{a_2}+\act{c_1}{a_2}+a_1^{b_2}+a_1^{c_2} + \Phi_l(c_1,b_2) + \Phi_r(b_1,c_2),b_1b_2+\act{c_1}{b_2}+b_1^{c_2},c_1c_2).
\end{align*}

\subsection{The dual of pointed sets}

The dual of the category of pointed sets (or more generally pointed objects in any topos) is semi-abelian \cite{JMT02,Bou04}. Furthermore, it is algebraically coherent \cite{CGVdL15} and action representable, with the actor of an object $A$ being $A$, acting on itself by conjugation (this holds in any \emph{Boolean} topos) \cite{BJK05a}. This means that for every action $\xi^B_A$ of $B$ on $A$ in $\Setop$, there exists a unique morphism $f^B_A\colon B\to A$ (i.e. a base-point preserving function $A\to B$) such that $\xi^B_A=(f^{B}_{A})^{*}\chi_A$. In particular, every split short exact sequence in $\Setop$ corresponds, up to isomorphism, to a diagram in $\Set_*$ of the form
\[\begin{tikzcd}
	A & A\sqcup_0 B \ar[l,"{[1,0]}"'] \ar[r,shift right,"{[f,1]}"'] & B \ar[l, shift right,"\iota_Y"']
\end{tikzcd}\]
(where $A\sqcup_0 B$ denotes the disjoint union of $A$ and $B$ with their base-points identified, which is their coproduct in $\Set_*$, and thus their product in $\Setop$).

In particular, for a ternary semi-direct product on three objects $A,B,C$, the actions of $C+B$ on $A$ and of $C$ on $B$ are uniquely determined by three base-point preserving functions
\begin{equation*}
	f^{B}_{A} \colon A \to B, \qquad f^{C}_{A} \colon A \to C, \qquad f^{C}_{B} \colon B \to C.
\end{equation*}

The object $K[\xi^{C}_{B}]$ then corresponds to the cokernel of the morphism
\[m\colon B\sqcup_0 C\to B\times C\]
induced by the injections $b\mapsto (b,f^{C}_B(b))$ and $c\mapsto (0,c)$. Since $\Setop$ is action representable, the compatibility condition $\phi^{\ast}\chi_A = k_{B\rtimes C}^{\ast} \xi^{C+B}_A$ is equivalent to $\phi = q\circ (f^{B}_A,f^{C}_A)$.

Every ternary semi-direct product of $A,B,C$ in $\Setop$ is thus of the form of \autoref{ex:representable}, which means it is determined by the functions $f^{B}_{A},f^{C}_{A},f^{C}_{B}$, and given by $A \sqcup_0 B \sqcup_0 C$, with a diagram (in $\Set_*$) of the form
\[\begin{tikzcd}[column sep=large]
	A \ar[d,equal] & A \sqcup_0 B \ar[l,"{[1,0]}"']\ar[r,"{[f^B_A,1]}"',shift right] & B \ar[l,"\iota_B"',shift right]\\
	A & A \sqcup_0 B \sqcup_0 C \ar[l,"{[1,0,0]}"'] \ar[u,"{[\iota_A,\iota_B,0]}"'] \ar[d,shift right,"{[f^{C}_A, f^{C}_B,1]}"'] & B\sqcup_0 C \ar[l,"{[\iota_B,\iota_C]}"]\ar[d,shift right,"{[f^{C}_B,1]}"'] \ar[u,"{[1,0]}"']\\
	& C \ar[r,equal]\ar[u,"{\iota_C}"',shift right] & C.\ar[u,"{\iota_C}"',shift right]
\end{tikzcd}\]

We note that the function $[\iota_{B},\iota_{C}]$ is always a split monomorphism, with retraction given by $[\alpha,\iota_B,\iota_C]$ for some $\alpha \colon A\to B\sqcup_0 C$. Such a retraction, however, is only compatible with the two vertical split short exact sequences if $\phi=q\circ (f^{B}_A,f^{C}_A)=0$, or equivalently, if and only $(f^{B}_A,f^{C}_A)=m\alpha$. By definition of $m$, this happens if and only if, for all $a\in A$, the pair $(f^{B}_A(a),f^{C}_A(a))$ can be expressed as either $(b,f^{C}_B(b))$ or $(0,c)$, and thus if and only if $f^{C}_{B}(f^{B}_{A}(a)) = f^{C}_A(a)$ whenever $f^{B}_A(a)\neq 0$.

\bibliography{biblio_sdp}
\bibliographystyle{plain}

\end{document}